\DeclareMathOperator\supp{supp}%support set order
\numberwithin{equation}{section}
\definecolor{bleu1}{RGB}{0,57,128}
\def\bleu1{\color{bleu1}}
\patchcmd{\section}{\normalfont}{\normalfont \bleu1}{}{}
\patchcmd{\subsection}{\normalfont}{\normalfont \bleu1}{}{}
\patchcmd{\subsubsection}{\normalfont}{\normalfont \bleu1}{}{}
\xpatchcmd{\proof}{\itshape}{\it \bleu1 \proofnamefont}{}{}
\newcommand{\proofnamefont}{}
\newtheorem{proposition}{Proposition}[section]
\newtheorem{theorem}{Theorem}[section]
\newtheorem{lemma}{Lemma}[section]
\newtheorem{remark}{Remark}[section]
\newtheorem{corollary}{Corollary}[section]
\newcommand{\Z}{{\mathbb Z}}
\newcommand{\R}{{\mathbb R}}
\newcommand{\Q}{{\mathbb Q}}
\newsavebox{\@brx}
\newcommand{\llangle}[1][]{\savebox{\@brx}{\(\m@th{#1\langle}\)}%
\mathopen{\copy\@brx\kern-0.5\wd\@brx\usebox{\@brx}}}
\newcommand{\rrangle}[1][]{\savebox{\@brx}{\(\m@th{#1\rangle}\)}%
\mathclose{\copy\@brx\kern-0.5\wd\@brx\usebox{\@brx}}}
\begin{document}

\title[]{Spectrum of  Hatano-Nelson model with strictly ergodic potentials}

\author{Xueyin Wang}
\address{
Chern Institute of Mathematics and LPMC, Nankai University, Tianjin 300071, China
}

\email{xueyinwang@mail.nankai.edu.cn}

\author{Zhenfu Wang}
\address{
Chern Institute of Mathematics and LPMC, Nankai University, Tianjin 300071, China
}

\email{zhenfuwang@mail.nankai.edu.cn}

\author {Jiangong You}
\address{
Chern Institute of Mathematics and LPMC, Nankai University, Tianjin 300071, China} \email{jyou@nankai.edu.cn}

\author{Qi Zhou}
\address{
Chern Institute of Mathematics and LPMC, Nankai University, Tianjin 300071, China
}

\email{qizhou@nankai.edu.cn}

\begin{abstract}
We provide a precise formula for the spectrum of the Hatano-Nelson model with strictly ergodic potentials in terms of its Lyapunov exponent. As applications, one clearly observes the real-complex spectrum transition. Moreover, if the Lyapunov exponent is continuous, the spectrum of the Hatano-Nelson model in $\ell^{2}(\mathbb{Z})$ can be approximated by the spectrum of its finite-interval truncation with periodic boundary conditions.  Both of these results are strikingly different from the Hatano-Nelson model with random potentials \cite{Dav01A, Dav01, Dav02}.
\end{abstract}

\maketitle

\section{Introduction}
The Hatano-Nelson model, denoted as $\mathcal{H}(g):\ell^{2}(\mathbb{Z})\rightarrow \ell^{2}(\mathbb{Z})$, is defined as follows:
\begin{equation}\label{HNV}
[\mathcal{H}(g)u]_{n}= -\mathrm{e}^{g}u_{n+1}-\mathrm{e}^{-g}u_{n-1}+ V(n)u_{n},
\end{equation}
where $g\in\mathbb{R}$ is referred to as the nonreciprocal hopping parameter, and $V(\cdot):\mathbb{Z}\rightarrow\mathbb{C}$ is the potential. We only consider the case $g> 0$, as the case $g<0$ is similar. The Hatano-Nelson model was first proposed by Hatano and Nelson \cite{HN96} in 1996, motivated by the study of superconductivity. This model describes the behavior of a quantum particle on a one-dimensional tight-binding lattice with a superimposed imaginary magnetic flux. It was later found that this model plays a crucial role in population biology \cite{AHN,NS98} and solid-state physics \cite{HN97, HN98}. Recently, the Hatano-Nelson model has once again gained attention due to the topological phase transition in non-Hermitian physics \cite{Gong}.

On the other hand, the Hatano-Nelson model naturally appears in the study of  quasi-periodic Schr\"odinger operators. 
Recent quantitative global theory \cite{Av0, GJYZ} of one-frequency  analytic quasi-periodic Schr\"odinger operators actually  establishes a Thouless type formula for the  Schr\"odinger operator
\begin{equation}\label{SO}
[H(x+\mathrm{i}g)u]_{n}=-u_{n+1}-u_{n-1}+v(x+ n\alpha+\mathrm{i}g)u_{n}
\end{equation}
and its dual operator (via Aubry duality)
\begin{equation}\label{dual}
[\widehat{L}(g,x)u]_{n}=-\sum_{k\in\mathbb{Z}}\widehat{v}_{k}\mathrm{e}^{2\pi kg}u_{n+k}+2\cos2\pi (x+ n\alpha)u_{n},
\end{equation} 
where $x\in\mathbb{T}:=\mathbb{R}/\mathbb{Z}$, $v\in C^{\omega}(\mathbb{T},\mathbb{R})$, and $\alpha\in\mathbb{R}\backslash\mathbb{Q}$.
In particular, if we let $v(x)=2\cos2\pi(x)$, then \eqref{SO} is the complexified almost Mathieu operator and its dual operator \eqref{dual} is just the Hatano-Nelson model \eqref{HNV} with the quasi-periodic potential $V(n)=2\cos2\pi(x+n\alpha)$. 

In this paper, we consider the dynamical defined Hatano-Nelson model, denoted by $\{\mathcal{H}(g,x)\}_{x\in X}$, which is described as:
\begin{equation}\label{ddhn}
[\mathcal{H}(g,x)u]_{n}= -\mathrm{e}^{g}u_{n+1}-\mathrm{e}^{-g}u_{n-1}+ v(T^{n}x)u_{n},
\end{equation}
where $(X,\mathcal{B},\mu)$ is a probability measure space and $T: X\rightarrow X$ is an invertible measure-preserving transformation.  Denote by $\Sigma_{x}(g)$ the spectrum of $\mathcal{H}(g,x)$.

Instead of considering the operator on $\ell^{2}(\Z)$,  physicists and researchers in the random matrix community are more interested in the finite size version, known as the truncated Hatano-Nelson model, denoted by:
\begin{equation*}
[\mathcal{H}_{n}(g,x)u]_{k}=-\mathrm{e}^{g}u_{k+1}-\mathrm{e}^{-g}u_{k-1}+v(T^{k}x)u_{k}, \quad 0\leqslant k\leqslant n-1,
\end{equation*}
with the periodic boundary conditions\footnote{It is worth noting that if the boundary conditions \eqref{pbc} are replaced by Dirichlet boundary conditions, where $u_{-1}=u_{n}=0$, then the spectrum of the finite size Hatano-Nelson model is the same for all $g$. Further discussions on this matter can be found in Appendix \ref{whs}. }
\begin{equation}\label{pbc}
u_{0}=u_{n} \quad \text{and} \quad u_{-1}=u_{n-1},
\end{equation}
which can be represented as by the matrix
\begin{equation*}
\mathcal{H}_{n}(g,x)=\begin{pmatrix}
v(T^{0}x)&-\mathrm{e}^{g}&0&\cdots&-\mathrm{e}^{-g}\\
-\mathrm{e}^{-g}&v(T^{2}x)&-\mathrm{e}^{g}&\cdots&0\\
\vdots&\ddots&\ddots&\ddots&\vdots\\
0&\cdots&-\mathrm{e}^{-g}&v(T^{n-2}x)&-\mathrm{e}^{g}\\
-\mathrm{e}^{g}&\cdots&0&-\mathrm{e}^{-g}&v(T^{n-1}x)
\end{pmatrix}.
\end{equation*}

Let $\Sigma_{n,x}(g):=\{ E_{j}(g,x)\}_{1\leqslant j\leqslant n}$ be the spectrum (eigenvalues) of $\mathcal{H}_{n}(g,x)$. The central issue is the limit of the spectrum of  $\mathcal{H}_{n}(g,x)$ as $n\rightarrow \infty$. More precisely people are interested in information on the asymptotic distribution of the eigenvalues of $\mathcal{H}_{n}(g,x)$ as $n\rightarrow \infty$.

We emphasize that the limit of the spectrum of  $\mathcal{H}_{n}(g,x) $ may not coincide with   the spectrum of $\mathcal{H}(g,x)$. An interesting question is when the spectrum of the Hatano-Nelson model in $\ell^{2}(\mathbb{Z})$ can be approximated by the spectrum of its finite-interval truncation with periodic boundary conditions.

All these problems have been studied for the Hatano-Nelson model with random potentials \cite{GK00,GK03,GS18}. In this paper, we investigate these problems for the Hatano-Nelson model with strictly ergodic potentials, which include quasi-periodic potentials as a special case. We will see that the spectral phenomenon of random potentials and strictly ergodic potentials are strikingly different.

\subsection{The Hatano-Nelson model on $\ell^{2}(\mathbb{Z})$ with strictly ergodic potentials}

Let $X$ be a topological space and $T: X\rightarrow X$.  $(X, T)$  is said to be \textit{strictly ergodic} if  $(X, T)$ is both minimal and uniquely ergodic. Typical examples are quasi-periodic translations on the torus and standard skew-shift. We say $V(\cdot)$ is strictly ergodic if $V(n)=v(T^{n}x)$ with $v\in C^{0}(X, \mathbb{C})$. It is easy to check that random potentials are dynamical defined, but not strictly ergodic. It is worth noting that if $T$ is minimal, then there exists a subset $\Sigma(g)\subseteq\mathbb{C}$ such that $\Sigma_{x}(g)=\Sigma(g)$ for all $x\in X$ \cite{BLLS}.

To introduce our findings, the central concept we will be discussing is the Lyapunov exponent. The eigenfunction equation   $\mathcal{H}(g,x)u=Eu$ can be expressed as:
\begin{equation*}
\begin{pmatrix}
u_{n}\\
u_{n-1}
\end{pmatrix}=A_{n}^{g}(x)
\begin{pmatrix}
u_{0}\\
u_{-1}
\end{pmatrix}=\prod_{k=n-1}^{0} \begin{pmatrix}
\mathrm{e}^{-g}(v(T^{k}x)-E)&-\mathrm{e}^{-g}\\1&0
\end{pmatrix}
\begin{pmatrix}
u_{0}\\
u_{-1}
\end{pmatrix}.
\end{equation*}
The (maximum) Lyapunov exponent at $E$ is defined as:
\begin{equation*}
L_{g}(E)=\lim_{n\rightarrow \infty} \int_{X}  \frac{1}{n}\log\|A^{g}_{n}(x)\|\mathrm{d}\mu(x).
\end{equation*}
In the case when $g=0$, we abbreviate $A^{0}_{n}$ as $A_{n}$ and  $L_{0}(E)$ as $L(E)$. We can separate $\mathbb{C}$ into three sets: $\mathbb{C}= \mathcal{E}_{-} \cup \mathcal{E}_{0}\cup \mathcal{E}_{+}$, where
\begin{equation}\label{decomposition}
\begin{split}
&\mathcal{E}_{-}=\{E: L(E)< g\}, \ \mathcal{E}_{0}=\{E: L(E)= g\},\ \mathcal{E}_{+}=\{E: L(E)> g\}.
\end{split}
\end{equation}

Now we can present our main result as follows:

\begin{theorem}\label{spec}
Let $(X,T)$ be strictly ergodic and $X$ be a compact and connected metric space. Let $v\in C^{0}(X,\mathbb{C})$ and $g>0$. Then
\begin{equation*}
\Sigma(g)=\mathcal{E}_{0} \cup (\Sigma(0)\cap \mathcal{E}_{+}).
\end{equation*}
\end{theorem}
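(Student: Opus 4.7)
The plan is to characterize $E \not\in \Sigma(g)$ through the dynamics of the cocycle $(T, A^g)$ and to exploit the algebraic link between $A^g$ and the Schr\"odinger cocycle $A^0$. The formal gauge substitution $u_n = e^{-gn} w_n$, which converts a solution of $\mathcal{H}(g,x)u = Eu$ into a solution of $\mathcal{H}(0,x)w = Ew$, yields the identity
\[
A^g_n(x) \;=\; e^{-gn}\, D\, A^0_n(x)\, D^{-1},\qquad D = \mathrm{diag}(1, e^g),
\]
so the top Lyapunov exponent of $A^g$ at $E$ is $L(E) - g$ and its invariant directions (when they exist) are $D$-images of those of $A^0$. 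I would use this as the dictionary between the self-adjoint and Hatano--Nelson dynamics throughout.

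For the inclusion $\Sigma(g) \subseteq \mathcal{E}_0 \cup (\Sigma(0) \cap \mathcal{E}_+)$, I would argue by cases. When $E \in \mathcal{E}_+ \setminus \Sigma(0)$, Johnson's theorem for ergodic self-adjoint Schr\"odinger operators gives uniform hyperbolicity of $A^0$ with a continuous invariant splitting; by the identity, $A^g$ is also uniformly hyperbolic with rates $\pm L(E) - g$ straddling $0$, and the Green's function built from the two exponentially separated bundles gives a bounded inverse of $\mathcal{H}(g,x) - E$ on $\ell^{2}(\mathbb{Z})$. When $E \in \mathcal{E}_-$, strict ergodicity together with Furman's uniform subadditive ergodic theorem produces $\|A^0_n(x)\| \leq C_\epsilon e^{n(L(E) + \epsilon)}$ uniformly in $x$; choosing $\epsilon$ so $L(E) + \epsilon < g$, the identity gives uniform forward contraction $\|A^g_n(x)\| \leq Ce^{-\gamma n}$ with $\gamma = g - L(E) - \epsilon > 0$. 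Injectivity of $\mathcal{H}(g,x)-E$ on $\ell^{2}(\mathbb{Z})$ follows because backward iteration expands every non-zero initial vector, and a Duhamel series against the forward propagator produces the bounded inverse.

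For the reverse inclusion I would transplant Weyl sequences via the gauge. The pointwise identity $(\mathcal{H}(g)-E)(e^{-g\cdot}w) = e^{-g\cdot}(\mathcal{H}(0)-E)w$ implies that if $w^{(N)}$ is a normalized Weyl sequence for $\mathcal{H}(0)-E$ supported on an interval $I_N$ with $|I_N|\to\infty$ and residual $\|(\mathcal{H}(0)-E)w^{(N)}\|_{2} \lesssim e^{-(L(E)-\delta)|I_N|}$ (such exponential Weyl sequences are standard in the positive-Lyapunov regime), then comparing $\ell^{2}$ norms over $I_N$ yields
\[
\frac{\|(\mathcal{H}(g)-E)u^{(N)}\|_{2}}{\|u^{(N)}\|_{2}} \;\lesssim\; e^{(g-L(E)+\delta)|I_N|}\;\longrightarrow\; 0
\]
for $u^{(N)}_n := e^{-gn}w^{(N)}_n$, provided $L(E) > g$. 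This puts $\Sigma(0)\cap \mathcal{E}_+$ into $\Sigma(g)$. The remaining inclusion $\mathcal{E}_0 \subseteq \Sigma(g)$ then follows from closedness of the spectrum and (upper) semicontinuity of $L$, via a boundary accumulation argument on top of the three inclusions already established.

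The main obstacle I expect is the $\mathcal{E}_-$ subcase: $A^g$ there is \emph{not} uniformly hyperbolic in the conventional sense (both Lyapunov exponents are strictly negative, so no splitting straddles $0$), and the standard two-sided Green's function construction is therefore unavailable. Producing a bounded inverse of $\mathcal{H}(g,x)-E$ must rely on a genuinely one-sided formula using only the uniform forward contraction, with careful control over the behaviour at $-\infty$; a dual argument for the adjoint $\mathcal{H}(-g)^{\ast} - \bar E$, for which the roles of $\pm\infty$ swap, is the natural backup. Justifying the exponential Weyl sequences in the $\Sigma(0)\cap \mathcal{E}_+$ step is the other technical point, though here the strict positivity of $L(E)-g$ supplies the needed margin via standard ergodic Schr\"odinger theory.
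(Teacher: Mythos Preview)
Your treatment of the inclusion $\Sigma(g) \subseteq \mathcal{E}_0 \cup (\Sigma(0)\cap\mathcal{E}_+)$ is essentially the paper's argument: for $E\in\mathcal{E}_+\setminus\Sigma(0)$ the uniformly hyperbolic splitting of $S_E$ produces the Green's function, and for $E\in\mathcal{E}_-$ Furman's uniform upper bound combined with the gauge gives the one-sided forward-contracting propagator from which a bounded inverse is built. The paper writes this one-sided inverse out explicitly and checks it is a two-sided inverse, which is exactly what your Duhamel remark would unfold into.

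The reverse inclusion, however, has two genuine gaps. First, the claim that Weyl sequences $w^{(N)}$ for $\mathcal{H}(0)$ with residual $\lesssim e^{-(L(E)-\delta)|I_N|}$ are ``standard in the positive-Lyapunov regime'' is not justified and, as stated, is not available. Non-uniform hyperbolicity (i.e.\ $E\in\Sigma(0)$ with $L(E)>0$) yields, via Sacker--Sell, only a \emph{bounded} solution of $\mathcal{H}(0)w=Ew$, and truncating it gives Weyl sequences with residual decaying merely like $|I_N|^{-1/2}$; after gauging by $e^{-gn}$ this is swamped by the factor $e^{g|I_N|}$. Producing a solution that peaks in the middle and decays at rate close to $L(E)$ on both sides over arbitrarily long windows is precisely the statement that the whole interval $[-L(E),L(E)]$ lies in the Sacker--Sell spectrum of $S_E$, which is the Johnson--Palmer--Sell theorem (using compactness and connectedness of $X$); you are invoking this content without naming it. Second, the ``boundary accumulation'' argument for $\mathcal{E}_0\subseteq\Sigma(g)$ fails outright: for complex $v$ the set $\mathcal{E}_0$ can have nonempty interior (cf.\ the example $v(x)=\lambda e^{2\pi i x}$ with $g=\log|\lambda|$, where $\mathcal{E}_0$ is a solid ellipse), so an interior point of $\mathcal{E}_0$ is not approximated by anything you have already placed in $\Sigma(g)$. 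Even for real $v$, points of $\mathcal{E}_0$ off the real axis have neighbourhoods meeting $\mathcal{E}_+$ only in $\mathcal{E}_+\setminus\Sigma(0)\subseteq\rho(g)$, so no accumulation from $\Sigma(g)$ is available.

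The paper handles both cases of the reverse inclusion in one stroke by working directly with the Hatano--Nelson cocycle. Since $S_E^g$ is conjugate to $e^{-g}S_E$, the question is whether $-g$ lies in the Sacker--Sell spectrum $\Sigma_{SS}(S_E)$. For $E\in\mathcal{E}_0$ one has $-g=-L(E)\in\Sigma_L\subseteq\Sigma_{SS}$; for $E\in\Sigma(0)\cap\mathcal{E}_+$ one has $0\in\Sigma_{SS}$, and the Johnson--Palmer--Sell inclusion $\partial\Sigma_{SS}\subseteq\{\pm L(E)\}$ (which needs $X$ connected) forces $\Sigma_{SS}=[-L(E),L(E)]\ni -g$. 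In either case $(T,S_E^g)$ has no exponential dichotomy, so Sacker--Sell provides a bounded solution of $\mathcal{H}(g,x_0)u=Eu$ for some $x_0$, and truncating that bounded solution gives a Weyl sequence for $\mathcal{H}(g,x_0)$ directly, with no need to transport anything through the gauge.
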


Let us first compare the spectral phenomenon discovered in this paper with Hatano-Nelson model with random potentials. It is well-known that for random potentials, there exists a compact subset $\Sigma(g)\subseteq \mathbb{C}$ such that $\Sigma_{x}(g) = \Sigma(g)$ with probability one. Davies \cite{Dav01A} proved that if $V$ is i.i.d. according to a probability measure $\mu$ with a compact support $M=[-\lambda,\lambda]$, then with probability one,
\begin{equation}\label{tuo}
\Sigma(g)\supseteq O+M,
\end{equation} 
where $O$ is the ellipse $\{\mathrm{e}^{g+\mathrm{i}\theta}+\mathrm{e}^{-g-\mathrm{i}\theta}: \theta\in[0,2\pi]\}$.
Moreover, if $\lambda\geqslant \mathrm{e}^{g}+\mathrm{e}^{-g}$, then
\begin{equation}\label{tuo2}
\Sigma(g)=O+M.
\end{equation}
It is obvious that the spectrum formula of $\Sigma(g)$ in Theorem \ref{spec} is different from that in \eqref{tuo} or \eqref{tuo2} with i.i.d. potentials.  Moreover, from \eqref{tuo2} it is easy to see that for real-valued i.i.d. potentials, $\Sigma(g)$ has a complex spectrum even $m(\Sigma(g))>0$ for any $g>0$, where $m$ represents the two-dimensional Lebesgue measure in the complex plane $\mathbb{C}$. However, in the following, we will see that for real-valued strictly ergodic potentials, $\Sigma(g)$ has a real-complex spectrum transition when $g$ varies.

If $v$ is real-valued, following the custom in \cite{GK00,GK03,GS18}, $\Sigma(0)\cap \mathcal{E}_{+}$ is called the real component and $\mathcal{E}_{0}$ is called the complex component (even if $\mathcal{E}_{0}$ can intersect $\mathbb{R}$). 
Whether  $\mathcal{H}(g,x)$ has  real-complex spectrum transition is significant in non-Hermitian physics as it indicates a change in the topology of the spectrum \cite{Gong, longhi}. For {\it real-valued} strictly ergodic potentials, Theorem \ref{spec} implies the real-complex spectrum transition when $g$ varies:

\begin{theorem}\label{transition}
Let $(X,T)$ be strictly ergodic and $X$ be a compact and connected metric space. Let $v\in C^{0}(X,\mathbb{R})$ and $g> 0$. Denote 
\begin{equation}\label{gcr}
\underline{g}_{cr}= \inf\{L(E): E\in \mathbb{R}\}\footnote{In general, we have $\underline{g}_{cr} \leqslant \inf\{L(E): E\in\Sigma(0)\}$ and the equality holds if $L(E)$ is continuous.}, \ \
\overline{g}^{cr}= \sup\{L(E): E\in \Sigma(0)\}.
\end{equation} 
Then we have the following:
\begin{enumerate}[font=\normalfont, label={(\arabic*)}]
\item \label{item:real}If $g\leqslant \underline{g}_{cr}$, then $\Sigma(g)=\Sigma(0)\subseteq\mathbb{R}$.

%\item \label{item:compspec}If $g>\underline{g}_{cr}$, then $\Sigma(g)\backslash \mathbb{R}\neq \emptyset$.

\item \label{item:real-comp}If $\underline{g}_{cr}<g<\overline{g}^{cr}$, then $\Sigma(g)$ contains both complex and real components.

\item \label{item:comp}If $g\geqslant \overline{g}^{cr}$, then $\Sigma(g)=\mathcal{E}_{0}$ only contains  complex components.
\end{enumerate}
\end{theorem}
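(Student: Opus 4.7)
The plan is to reduce each case to Theorem \ref{spec}, using that $\mathcal{H}(0,x)$ is self-adjoint (so $\Sigma(0) \subseteq \mathbb{R}$) together with standard potential-theoretic properties of the Lyapunov exponent $L(E)$ viewed as a function of $E \in \mathbb{C}$. Specifically, the Thouless formula expresses $L(E)$ as the logarithmic potential of the density-of-states measure supported on $\Sigma(0)$, from which $L$ is subharmonic on $\mathbb{C}$, harmonic on the connected open set $\mathbb{C} \setminus \Sigma(0)$, and grows like $\log|E|$ at infinity. These three facts, together with Theorem \ref{spec}, drive the entire argument.

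For \ref{item:real}, I would apply the strong minimum principle to the harmonic function $L$ on $\mathbb{C} \setminus \Sigma(0)$: combined with $L(E) \to \infty$ as $|E| \to \infty$, this forces $L(E) > \underline{g}_{cr}$ for every $E \in \mathbb{C} \setminus \Sigma(0)$, and in particular $\mathcal{E}_0 \subseteq \Sigma(0)$ whenever $g \leq \underline{g}_{cr}$. The definition of $\underline{g}_{cr}$ also gives $L(E) \geq g$ on $\Sigma(0)$ and hence $\Sigma(0) \subseteq \mathcal{E}_0 \cup \mathcal{E}_+$, so Theorem \ref{spec} reduces to $\Sigma(g) = \mathcal{E}_0 \cup (\Sigma(0) \cap \mathcal{E}_+) = \Sigma(0) \subseteq \mathbb{R}$. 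Case \ref{item:comp} is essentially immediate: $g \geq \overline{g}^{cr}$ means $L(E) \leq g$ on $\Sigma(0)$, so $\Sigma(0) \cap \mathcal{E}_+ = \emptyset$ and Theorem \ref{spec} reduces to $\Sigma(g) = \mathcal{E}_0$.

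The interesting case is \ref{item:real-comp}. The real part is straightforward: $\overline{g}^{cr} > g$ yields some $E \in \Sigma(0) \cap \mathcal{E}_+ \subseteq \Sigma(g)$. To exhibit a non-real point of $\mathcal{E}_0$ I would pick $E_0 \in \mathbb{R}$ with $L(E_0) < g$ (available since $\underline{g}_{cr} = \inf_{\mathbb{R}} L < g$) and analyze $f(t) = L(E_0 + \mathrm{i}t)$ for $t \geq 0$. Since $L$ is harmonic off $\Sigma(0) \subseteq \mathbb{R}$, $f$ is real-analytic on $(0, \infty)$; upper semicontinuity of $L$ at $E_0$ gives $f(t) < g$ for all small $t > 0$; and $f(t) \to \infty$ as $t \to \infty$. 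The intermediate value theorem then produces $t^{\ast} > 0$ with $L(E_0 + \mathrm{i}t^{\ast}) = g$, i.e., a non-real element of $\mathcal{E}_0 \subseteq \Sigma(g)$.

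The main obstacle is the argument in \ref{item:real} that $\mathcal{E}_0 \subseteq \Sigma(0)$: a priori $\mathcal{E}_0$ could include points of $\mathbb{C}\setminus\Sigma(0)$, either in spectral gaps on $\mathbb{R}$ or off the real axis entirely. The combination of harmonicity of $L$ on the connected domain $\mathbb{C} \setminus \Sigma(0)$ with the logarithmic growth at infinity, via the strong minimum principle, is precisely what rules this out and makes the decomposition in Theorem \ref{spec} reduce to $\Sigma(0)$.
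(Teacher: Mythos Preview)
Your proposal is correct and follows essentially the same route as the paper. Both arguments reduce everything to Theorem~\ref{spec} and the potential-theoretic properties of $L$; parts~\ref{item:real-comp} and~\ref{item:comp} are handled identically (pick a real $E_0$ with $L(E_0)<g$, slide in the imaginary direction using continuity/harmonicity off $\Sigma(0)$ and growth of $L$ to hit the level set; use upper semicontinuity on the compact set $\Sigma(0)$ to realize $\overline{g}^{cr}$). The only cosmetic difference is in part~\ref{item:real}: you invoke the strong minimum principle on the whole connected domain $\mathbb{C}\setminus\Sigma(0)$ together with $L(E)\to\infty$, while the paper argues locally---harmonicity of $L$ near a hypothetical $E'\in\mathcal{E}_0\setminus\Sigma(0)$, then the extremum principle forces $L$ to be locally constant, contradicting the strict monotonicity $L(E'+\mathrm{i}\eta)>L(E')$ from Lemma~\ref{analytic}. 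These are two packagings of the same minimum-principle idea. Your global version is slightly cleaner, but it tacitly uses that $\underline{g}_{cr}=\inf_{E\in\mathbb{C}}L(E)$ (not just $\inf_{E\in\mathbb{R}}L(E)$), which you should make explicit; it follows immediately from Lemma~\ref{analytic}\ref{item:mono}.
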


\begin{remark}
To the best knowledge of the authors,  Theorem \ref{transition} presents the first result of the real-complex spectrum transition for the Hatano-Nelson model on $\ell^{2}(\mathbb{Z})$.
\end{remark}

Once the operator $\mathcal{H}(g,x)$ has a complex spectrum, it is interesting to study the structure of the complex spectrum $\mathcal{E}_{0}$. For this purpose, let us recall the  well-known Thouless formula. Indeed,
if the potential is  real-valued, the Thouless formula says that 
\begin{equation*}
L(E)=\int_{-\infty}^{\infty}\log |z-E|\mathrm{d}N(z).
\end{equation*}
where $N(E)$ is the   integrated density of states  for the self-adjoint Schr\"odinger operator $H(x):=\mathcal{H}(0,x)$. By the upper semi-continuity of $L(E)$, $\mathbb{R}\cap \mathcal{E}_{-}$ consists of disjoint open intervals, i.e., $\mathbb{R}\cap \mathcal{E}_{-}=\cup_{j}(a_{j},b_{j})$. Moreover,  as a direct consequence of the Thouless formula, we have the following lemma.
\begin{lemma}[\cite{GK03}, Proposition 3.1]\label{analytic}
The following conclusions hold:
\begin{enumerate}[font=\normalfont, label={(\arabic*)}]
\item \label{item:mono} $L(E+\mathrm{i}\eta)=L(E-\mathrm{i}\eta)$, ${\partial_\eta} L(E+\mathrm{i}\eta)>0$, and $L(E+\mathrm{i}\eta)>L(E)$ for any $E\in\mathbb{R}$ and $\eta>0$.
\item \label{item:curve} The level set $$\{E\in \mathbb{C}: L(E)= g\} \cap \{E:\mathrm{Im}E>0\}$$ consists of disjoint analytic arcs, denoted by
\begin{equation*}
\mathrm{Im}E=f_{j} (\mathrm{Re} E),\ \ a_{j}<\mathrm{Re}E<b_{j},
\end{equation*}
whose end points lie on $\mathbb{R}$, i.e., $f_{j}(a_{j}+0)=f_{j}(b_{j}-0)=0$, if $|a_{j}|, |b_{j}|<\infty$.
\end{enumerate}	
\end{lemma}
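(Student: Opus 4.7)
The plan is to derive both items directly from the Thouless formula $L(E)=\int_{\mathbb{R}}\log|z-E|\,\mathrm{d}N(z)$, where $N$ is the integrated density of states, a Borel probability measure on $\mathbb{R}$ with compact support.

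For item (1), I would first note that for $E\in\mathbb{R}$ and $\eta>0$,
\[
L(E+\mathrm{i}\eta)=\int_{\mathbb{R}}\log\sqrt{(z-E)^{2}+\eta^{2}}\,\mathrm{d}N(z),
\]
which is manifestly even in $\eta$, yielding $L(E+\mathrm{i}\eta)=L(E-\mathrm{i}\eta)$. Differentiating under the integral (justified for $\eta>0$ by dominated convergence) gives
\[
\partial_{\eta}L(E+\mathrm{i}\eta)=\int_{\mathbb{R}}\frac{\eta}{(z-E)^{2}+\eta^{2}}\,\mathrm{d}N(z)>0.
\]
The pointwise strict inequality $\log\sqrt{(z-E)^{2}+\eta^{2}}>\log|z-E|$ for every $z\in\mathbb{R}$ and $\eta>0$, integrated against the nontrivial measure $N$, then yields $L(E+\mathrm{i}\eta)>L(E)$.

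For item (2), I would use that $L$ is the logarithmic potential of a measure supported on $\mathbb{R}$, hence harmonic (and in particular real analytic) on the open upper half-plane $\{\mathrm{Im}\,E>0\}$. Item (1) provides $\partial_{\eta}L>0$ there, so the implicit function theorem implies that the level set $\{L=g\}\cap\{\mathrm{Im}\,E>0\}$ is locally the graph of a real analytic function $\eta=f(\mathrm{Re}\,E)$. For fixed $x\in\mathbb{R}$, the map $\eta\mapsto L(x+\mathrm{i}\eta)$ is continuous and strictly increasing on $(0,+\infty)$, with $L(x+\mathrm{i}\eta)\to+\infty$ as $\eta\to+\infty$ and (by monotone convergence) $L(x+\mathrm{i}\eta)\to L(x)$ as $\eta\to 0^{+}$. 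Therefore a positive solution of $L(x+\mathrm{i}\eta)=g$ exists, and is then unique, iff $L(x)<g$, i.e.\ iff $x\in\bigcup_{j}(a_{j},b_{j})$. This produces a unique real analytic $f_{j}:(a_{j},b_{j})\to(0,+\infty)$ whose graph is the component of the level set lying over $(a_{j},b_{j})$; distinct $j$ project to disjoint intervals, so the arcs are pairwise disjoint.

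The main obstacle is the endpoint behavior $f_{j}(a_{j}+0)=f_{j}(b_{j}-0)=0$ when $|a_{j}|,|b_{j}|<\infty$. Treating $a_{j}$ (the case of $b_{j}$ is symmetric), suppose toward contradiction that $x_{k}\searrow a_{j}$ with $f_{j}(x_{k})\to\eta_{0}>0$. Continuity of $L$ on the open upper half-plane then yields $L(a_{j}+\mathrm{i}\eta_{0})=g$. Strict monotonicity from item (1) forces $L(a_{j}+\mathrm{i}\eta)<g$ for $0<\eta<\eta_{0}$, while dominated convergence (using compactness of $\supp N$) gives $\lim_{\eta\to 0^{+}}L(a_{j}+\mathrm{i}\eta)=L(a_{j})\leqslant g$. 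Since $a_{j}\notin\mathcal{E}_{-}$ also forces $L(a_{j})\geqslant g$, we conclude $L(a_{j})=g$. Applying item (1) once more gives $L(a_{j}+\mathrm{i}\eta)>L(a_{j})=g$ for every $\eta>0$, contradicting $L(a_{j}+\mathrm{i}\eta_{0})=g$. Hence $\eta_{0}=0$, completing the argument.
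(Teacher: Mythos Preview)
The paper does not supply its own proof of this lemma; it is quoted as Proposition~3.1 of \cite{GK03} and used as a black box. Your argument is correct and is essentially the standard derivation from the Thouless formula that one finds in \cite{GK03}. Two small points: in the endpoint argument you should first note that $f_{j}$ stays bounded as $x\searrow a_{j}$ (since $L(x+\mathrm{i}\eta)\geqslant\log\eta-C$ uniformly for $x$ near $a_{j}$, so $f_{j}(x)\leqslant\mathrm{e}^{g+C}$), which justifies extracting a finite subsequential limit $\eta_{0}$; and the passage $\lim_{\eta\to 0^{+}}L(a_{j}+\mathrm{i}\eta)=L(a_{j})$ is really monotone convergence (the integrand decreases to $\log|z-a_{j}|$ and is bounded above on $\supp N$), not dominated convergence as you wrote in the endpoint step. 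Neither affects the validity of the proof.
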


Lemma \ref{analytic} implies that $\mathcal{E}_{0}$ consists of a family of contours for real-valued strictly ergodic  potentials, and consequently $m(\Sigma(g))=0$. Therefore, Theorem \ref{transition} reveals that the spectral phenomenon of random potentials and strictly ergodic potentials are strikingly different from the random case where it always holds that $m(\Sigma(g))>0$.  However, as we will explain in section \ref{finite},  in the random setting, this kind of real-complex phase transition holds for $\mathcal{H}_n(g,x)$ in the large $n$ limit \cite{GK00, GK03}.

On the other hand, if $L(E)$ is continuous with respect to  $E$, then these contours are closed.  Thus, the natural question is whether the number of closed contours is finite. Indeed, if the potential is random, as argued by Goldsheid and Khoruzhenko \cite{GK00},  ``It is easy to construct examples with a prescribed finite number of contours. However we do not know any obvious reason for the number of contours to be finite for an arbitrary distribution''.
In this paper, we will provide a partial answer to Goldsheid and Khoruzhenko's question \cite{GK00} in the strictly ergodic  setting. 

\begin{corollary}\label{contour}
Under the assumption of Theorem \ref{transition}, assume further that $L(E)$ is continuous w.r.t. $E$. If $g>\overline{g}^{cr}$, then $\mathcal{E}_{0}$ consists of a finite number of contours. 
\end{corollary}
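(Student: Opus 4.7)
The plan is to reduce the corollary to showing that the level set $\mathcal{E}_{0}=\{E\in\mathbb{C}:L(E)=g\}$ has only finitely many connected components in $\mathbb{C}$. By Lemma \ref{analytic}, $L(E)=L(\bar E)$, so $\mathcal{E}_{0}$ is conjugation-symmetric; combined with the fact that the components of $\mathcal{E}_{0}\cap\{\mathrm{Im}\,E>0\}$ are analytic arcs with endpoints on $\mathbb{R}$, each connected component of $\mathcal{E}_{0}$ is a single closed loop whose intersection with the upper half-plane is exactly one of the contours of the statement. Hence the number of contours equals the number of connected components of $\mathcal{E}_{0}$ in $\mathbb{C}$.

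The key input is that $L$ is the logarithmic potential of the integrated density of states. Since $v$ is real, $N$ is supported on $\Sigma(0)\subseteq\mathbb{R}$, and the Thouless formula $L(E)=\int_{\mathbb{R}}\log|z-E|\,\mathrm{d}N(z)$ extends subharmonically from $\mathbb{R}$ to all $E\in\mathbb{C}$; in particular, $L$ is harmonic, hence real-analytic, on the connected open set $\mathbb{C}\setminus\Sigma(0)$. Since $g>\overline{g}^{cr}$ and $L$ is continuous, $L<g$ on the compact set $\Sigma(0)$, and since $L(E)\to+\infty$ as $|E|\to\infty$, the level set $\mathcal{E}_{0}$ is a compact subset of $\mathbb{C}\setminus\Sigma(0)$. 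A Wirtinger computation then yields $2\partial_{E}L(E)=-F(E)$, where $F(E):=\int\frac{\mathrm{d}N(z)}{z-E}$ is the Cauchy transform of $N$; $F$ is holomorphic on the connected domain $\mathbb{C}\setminus\Sigma(0)$ and $F(E)=-1/E+O(|E|^{-2})$ at infinity, so $F\not\equiv 0$ and its zeros (the critical points of $L$) are isolated. In particular, only finitely many can lie on the compact set $\mathcal{E}_{0}$.

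Outside of these finitely many critical points, the implicit function theorem shows $\mathcal{E}_{0}$ is a smooth real-analytic $1$-manifold. At a critical point $E_{0}$ where $F$ vanishes to order $k\geq 1$, taking a local holomorphic primitive $h$ of $-F/2$ gives $L=\mathrm{Re}(h)$ with $L(E)-g=\mathrm{Re}(\beta(E-E_{0})^{k+1})+o(|E-E_{0}|^{k+1})$ for some $\beta\neq 0$, whose leading-order zero set is a star of $2(k+1)$ rays from $E_{0}$; an implicit function/Puiseux argument upgrades this to $2(k+1)$ analytic arcs of $\mathcal{E}_{0}$ meeting at $E_{0}$. Globally, $\mathcal{E}_{0}$ is thus a finite one-dimensional real-analytic graph in $\mathbb{C}$ and has only finitely many connected components, which completes the proof. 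The main obstacle is the step of isolating the critical points of $L$: this depends crucially on the real-valued hypothesis, which keeps $N$ supported on $\mathbb{R}$ and $\mathbb{C}\setminus\Sigma(0)$ connected, preventing the holomorphic Cauchy transform $F$ from vanishing on an open set.
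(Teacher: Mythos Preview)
Your argument is essentially correct, though the reduction in your first paragraph is slightly imprecise: two of the paper's contours $\partial\Omega_{j}$ and $\partial\Omega_{k}$ can lie in the same connected component of $\mathcal{E}_{0}$ if they share a real endpoint (such an endpoint would then be a real critical point of $L$), so ``number of contours $=$ number of components'' need not hold literally. This does not matter, because your later analysis actually proves the stronger fact that $\mathcal{E}_{0}$ is a finite real-analytic graph: each point has a neighborhood in which $\mathcal{E}_{0}$ is a connected star of finitely many arcs, so compactness (finite subcover) exhibits $\mathcal{E}_{0}$ as a finite union of connected pieces, hence with finitely many arcs and finitely many components. That immediately yields finitely many contours in the paper's sense.

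Your route is genuinely different from the paper's. The paper argues that each contour $\partial\Omega_{j}$ must enclose a point of $\Sigma(0)$: otherwise $L$ would be harmonic on the enclosed region $\Omega_{j}$ with boundary value $g$, forcing $L\equiv g$ there and contradicting Lemma~\ref{analytic}\ref{item:mono}. If there were infinitely many contours, the real endpoints $a_{j}$ would accumulate at some $E^{*}$ with $L(E^{*})=g$; but arbitrarily close to $E^{*}$ one finds points of $\Sigma(0)$ (inside the nearby $\Omega_{k}$) where $L\leqslant\overline{g}^{cr}<g$, contradicting continuity of $L$ at $E^{*}$. The paper's proof is shorter and uses only the maximum principle together with the spectral meaning of the contours. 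Your approach instead treats $\mathcal{E}_{0}$ purely as a compact level set of a logarithmic potential lying away from its support: harmonicity off $\Sigma(0)$, isolated zeros of the holomorphic Cauchy transform, and the classical local normal form of level curves of harmonic functions. This is more general (it applies to level sets of any logarithmic potential in the complement of the support) but requires more machinery than the paper's elementary argument.
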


\begin{remark}
We should point out that the continuity of the Lyapunov exponent is a very important topic in dynamical systems. In the random setting, it is always continuous \cite{AEV,bv}. However, if the base dynamics is quasi-periodic, it depends sensitively on the regularity of the fiber \cite{BJ02, WY}.
\end{remark}

Theorem \ref{transition} not only reveals the real-complex spectrum transition, but also provides a mechanism for generating complex spectrum (the appearance of $\mathcal{E}_{0}$), even two-dimensional complex spectrum  ($m(\Sigma(g))>0$). We illustrate this with the example of quasi-periodic potentials.

\begin{corollary}\label{single}
	Let $\alpha\in\mathbb{R}\backslash\mathbb{Q}$ and $v(x)=\lambda \mathrm{e}^{2\pi \mathrm{i}x}$ with $|\lambda|>1$. Then $m(\Sigma(g))>0$ if and only if $g=\log|\lambda|$, where 
	 $\displaystyle\mathcal{E}_{0}=\bigg\{E\in\mathbb{C}: \Big(\frac{\mathrm{Re}E}{\cosh g}\Big)^{2}+\Big(\frac{\mathrm{Im}E}{\sinh g}\Big)^{2}\leqslant 4\bigg\}.$
\end{corollary}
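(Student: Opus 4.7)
The plan is to combine Theorem \ref{spec} with an explicit formula for the Lyapunov exponent of the cocycle. Since $v(x)=\lambda e^{2\pi\mathrm{i}x}$ has a single Fourier mode, complexifying $x\to x+\mathrm{i}\epsilon$ replaces $\lambda$ by $\lambda e^{-2\pi\epsilon}$; hence $L(E,\epsilon)\to L_0(E)$ as $\epsilon\to+\infty$ (the potential vanishes) while $L(E,\epsilon)\sim\log|\lambda|-2\pi\epsilon$ as $\epsilon\to-\infty$ (the potential dominates, via a Herman-type subharmonic estimate). By convexity of $L(E,\cdot)$ and Avila's integer-acceleration theorem, these two asymptotes force $L(E,\epsilon)=\max\{L_0(E),\log|\lambda|-2\pi\epsilon\}$; evaluating at $\epsilon=0$ yields the key formula $L(E)=\max\{L_0(E),\log|\lambda|\}$, where $L_0$ is the Lyapunov exponent of the free Laplacian.

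Parametrizing the larger eigenvalue of the free transfer matrix as $\mu_+=e^{g}e^{\mathrm{i}\theta}$ gives $E=-2\cosh g\cos\theta-2\mathrm{i}\sinh g\sin\theta$, so $\{L_0(E)=g\}$ is the ellipse $(\mathrm{Re}E/\cosh g)^2+(\mathrm{Im}E/\sinh g)^2=4$ and $\{L_0(E)\le g\}$ is its closed interior $\mathbb{E}_g$. Combining with the Lyapunov formula, one finds that $\mathcal{E}_0=\emptyset$ and $\mathcal{E}_+=\mathbb{C}$ when $g<\log|\lambda|$; that $\mathcal{E}_0=\mathbb{E}_g$ (the filled ellipse in the statement) and $\mathcal{E}_+=\mathbb{C}\setminus \mathbb{E}_g$ when $g=\log|\lambda|$; and that $\mathcal{E}_0=\partial \mathbb{E}_g$ with $\mathcal{E}_+=\mathbb{C}\setminus \mathbb{E}_g$ when $g>\log|\lambda|$.

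Separately I would show $\Sigma(0)=\partial \mathbb{E}_{\log|\lambda|}$, a set of two-dimensional Lebesgue measure zero. For every $E\notin\partial \mathbb{E}_{\log|\lambda|}$ the $\mathrm{SL}(2,\mathbb{C})$ Schr\"odinger cocycle $A^E$ admits a continuous uniformly hyperbolic splitting: when $L_0(E)>\log|\lambda|$ it is a small perturbation of the constant splitting of the free cocycle, and when $L_0(E)<\log|\lambda|$ an invariant line of the form $\binom{1}{b(x)}$ is obtained by solving the functional equation $b(x+\alpha)=-1/(\lambda e^{2\pi\mathrm{i}x}+b(x))$ as an analytic iteration in $1/\lambda$. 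A Sacker--Sell type argument then gives invertibility of $\mathcal{H}(0,x)-E$ on $\ell^{2}(\mathbb{Z})$, so $E\notin\Sigma(0)$.

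Finally, Theorem \ref{spec} gives $\Sigma(g)=\mathcal{E}_0\cup(\Sigma(0)\cap\mathcal{E}_+)$. When $g<\log|\lambda|$ one has $\Sigma(g)=\Sigma(0)=\partial \mathbb{E}_{\log|\lambda|}$, of measure zero; when $g=\log|\lambda|$ we get $\Sigma(g)\supseteq\mathcal{E}_0=\mathbb{E}_{\log|\lambda|}$, of positive measure; when $g>\log|\lambda|$ the inclusion $\Sigma(0)\subset \mathbb{E}_{\log|\lambda|}\subset \mathbb{E}_g$ forces $\Sigma(0)\cap\mathcal{E}_+=\emptyset$, so $\Sigma(g)=\partial \mathbb{E}_g$, of measure zero. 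This yields the equivalence $m(\Sigma(g))>0\Leftrightarrow g=\log|\lambda|$ together with the stated formula for $\mathcal{E}_0$ in the critical case. The main obstacle I expect is the sharp identification of $\Sigma(0)=\partial \mathbb{E}_{\log|\lambda|}$: the uniform-hyperbolicity construction is natural in the two asymptotic regimes $L_0\gg\log|\lambda|$ and $L_0\ll\log|\lambda|$, but making it work uniformly for every $\lambda$ with $|\lambda|>1$ (in particular when $|\lambda|$ is close to $1$) requires a careful analytic-continuation argument in the parameter $\lambda$.
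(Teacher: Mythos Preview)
Your overall strategy coincides with the paper's: both reduce the problem to the explicit formula $L(E)=\max\{L_0(E),\log|\lambda|\}$ and then invoke Theorem~\ref{spec}. The paper simply quotes this formula from \cite{WYZ} (Theorem~1.3 there) and declares the rest immediate; you instead re-derive it via Avila's quantization of acceleration and spell out the three cases for $g$ that the paper leaves implicit.

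The only vulnerable step is precisely the one you flag: establishing $\Sigma(0)=\partial\mathbb{E}_{\log|\lambda|}$ by constructing the hyperbolic splitting by hand. Your perturbative sketch (``small perturbation of the free splitting'' outside, iterative solution of a functional equation inside) does not obviously close uniformly when $L_0(E)$ is close to $\log|\lambda|$, and an analytic-continuation-in-$\lambda$ argument would be delicate. But this detour is unnecessary within the framework you already use. You have in fact derived the full complexified formula $L(E,\epsilon)=\max\{L_0(E),\log|\lambda|-2\pi\epsilon\}$; hence $\epsilon\mapsto L(E,\epsilon)$ is affine in a neighborhood of $\epsilon=0$ if and only if $L_0(E)\neq\log|\lambda|$, i.e.\ iff $E\notin\partial\mathbb{E}_{\log|\lambda|}$. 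Since $L(E)\geq\log|\lambda|>0$ everywhere, Avila's global theory \cite{Av0} (regularity $\Leftrightarrow$ uniform hyperbolicity for one-frequency analytic $\mathrm{SL}(2,\mathbb{C})$ cocycles) combined with Proposition~\ref{equi} gives $\Sigma(0)=\partial\mathbb{E}_{\log|\lambda|}$ in one line. With this in place your three-case computation is complete and matches the paper's conclusion.
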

\begin{remark}
Corollary \ref{single} was first proved in \cite{ZFW},  here we give a new proof. 
\end{remark}

For a more general case, we present another interesting example using the quantitative global theory of one-frequency Schr\"odinger operators \cite{GJYZ}. Consider the one-frequency Hatano-Nelson model with a complex extension  
\begin{equation}\label{extenHN}
[\mathcal{H}(g,x+\mathrm{i}y)u]_{n}=-\mathrm{e}^{g}u_{n+1}-\mathrm{e}^{-g}u_{n-1}+v(x+ n\alpha+\mathrm{i}y)u_{n}.
\end{equation}  
If $v$ is a trigonometric polynomial, the dual operator of $H(x)$ can be expressed as
\begin{equation*}
[\widehat{L}(x)u]_{n}=-\sum_{k=-m}^{m}  \widehat{v}_{k}u_{n+k} + 2\cos2\pi(x+ n\alpha)u_{n}.
\end{equation*}
Recalling that the eigenfunction equation $\widehat{L}(x)u=Eu$ leads to a cocycle denoted by $(\alpha,\widehat{L}_{E,v}^{2\cos})$, we note that $(\alpha,\widehat{L}_{E,v}^{2\cos})$ is symplectic and its  Lyapunov exponents come into pairs. Without loss of generality, we assume the non-negative part $0\leqslant \gamma_{1}(E)\leqslant \cdots\leqslant \gamma_{m}(E)$ according to multiplicity. Using the quantitative global theory of one-frequency Schr\"odinger operators \cite{GJYZ}, we prove the following result:

\begin{corollary}\label{2d}
Let $(\alpha, v)\in \mathbb{R}\backslash\mathbb{Q} \times C^{\omega}(\mathbb{T},\mathbb{R})$ with 
\begin{equation*}
v(x)=\sum_{k=-m}^{m} \widehat{v}_{k} \mathrm{e}^{2\pi \mathrm{i} kx}.
\end{equation*}
Let $g=2\pi my+\log |\widehat{v}_{m}|$ in \eqref{extenHN}. Then  $m(\Sigma(g,y))>0$ for any $y>\gamma_{m}(0)$, where $\Sigma(g,y)$ is the spectrum of $\mathcal{H}(g,x+\mathrm{i}y)$.
\end{corollary}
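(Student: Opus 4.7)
The plan is to apply Theorem \ref{spec} to the operator $\mathcal{H}(g,x+\mathrm{i}y)$, which reduces the problem to exhibiting a two-dimensional set of energies on which the ``$g=0$'' Lyapunov exponent equals $g$. Write $L(E,y)$ for the Lyapunov exponent of the cocycle associated with the Schr\"odinger eigenvalue equation $\mathcal{H}(0,x+\mathrm{i}y)u=Eu$, i.e.\ the analytic continuation of the usual Schr\"odinger cocycle with potential $v$ into the horizontal line at height $y$. Theorem \ref{spec} gives
\[
\Sigma(g,y) \supseteq \mathcal{E}_{0} = \{E\in\mathbb{C} : L(E,y) = g\},
\]
so it suffices to show that $\{E\in\mathbb{C}: L(E,y) = 2\pi my + \log|\widehat{v}_m|\}$ has positive planar Lebesgue measure.

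The main tool will be Avila's global theory together with its quantitative refinement in \cite{GJYZ}. For each fixed $E$, $y\mapsto L(E,y)$ is continuous, convex, and piecewise linear with integer accelerations bounded by $m$; on the top branch one necessarily has $L(E,y) = 2\pi my + \log|\widehat{v}_m|$, the intercept being pinned down by a Jensen's formula computation of $\int\log|v(x+\mathrm{i}y)|\,\mathrm{d}x$ in the limit $y\to\infty$. The content of \cite{GJYZ}, translated through Aubry duality between this Schr\"odinger cocycle and the dual cocycle $(\alpha,\widehat{L}_{E,v}^{2\cos})$, is that this top branch is entered precisely at the threshold $y=\gamma_m(E)$. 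Specializing to $E=0$ and $y>\gamma_m(0)$ yields $L(0,y)=2\pi my + \log|\widehat{v}_m| = g$, so $0\in\mathcal{E}_{0}$.

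To upgrade this single point to a set of positive planar measure, I would invoke upper semicontinuity of $\gamma_m(E)$ in $E\in\mathbb{C}$: the dual cocycle depends analytically on $E$, so $\gamma_m$ is upper semicontinuous by the standard subadditive/subharmonic argument. Hence $\{E\in\mathbb{C}:\gamma_m(E)<y\}$ is open and contains $0$ whenever $y>\gamma_m(0)$. On this whole open set the threshold criterion from \cite{GJYZ} again gives $L(E,y)=g$, so $\mathcal{E}_{0}$ contains a non-empty open subset of $\mathbb{C}$ and therefore $m(\Sigma(g,y))>0$.

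The main obstacle is the precise use of \cite{GJYZ}: identifying the acceleration threshold of the Schr\"odinger cocycle in the strip with the largest Lyapunov exponent $\gamma_m$ of the dual symplectic cocycle, and correctly matching the leading Fourier coefficient $\widehat{v}_m$ with the resulting exponential growth rate. Organizing the Aubry duality in the complex strip is the nontrivial input; once this identification is in place, the semicontinuity extension in the final step is standard.
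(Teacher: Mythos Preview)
Your proposal is correct and matches the paper's approach: apply Theorem~\ref{spec} to reduce to showing $\mathcal{E}_0$ has positive measure, then use the quantitative global theory (stated in the paper as Theorems~\ref{gne1} and~\ref{puig}) to obtain $L(E,\mathrm{i}y)=2\pi my+\log|\widehat{v}_m|=g$ on a full neighborhood of $E=0$. The only cosmetic difference is that the paper cites full continuity of $\gamma_m$ (Theorem~\ref{conLE}) to produce that neighborhood, whereas you note that upper semicontinuity of the top Lyapunov exponent already suffices since one only needs openness of $\{E:\gamma_m(E)<y\}$.
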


\subsection{Finite size Hatano-Nelson model}\label{finite}

It is worth noting that for real-valued random potentials, \eqref{tuo} and \eqref{tuo2} imply that the spectrum of $\mathcal{H}(g)$ is always two-dimensional. Therefore, regarding the random matrix $\mathcal{H}_{n}(g,x)$, a fundamental question arises: does the spectrum of $\mathcal{H}_{n}(g,x)$ converge to the spectrum of $\mathcal{H}(g)$ as $n\rightarrow \infty$? 

To address this question, Goldsheid and Khoruzhenko \cite{GK00, GK03} introduced the concept of \textit{selfaveraging} potentials. A real-valued potential is selfaveraging if the integrated density of states $N(E)$ of the Schr\"odinger operator with this potential exists. The class of selfaveraging potentials is vast, encompassing decaying potentials, periodic and almost-periodic potentials, as well as stationary random potentials (see \cite{PF} for proofs and additional examples). For selfaveraging potentials, if $g <\underline{g}_{cr}$, the eigenvalues of $\mathcal{H}_n(g,x)$ are asymptotically real, as shown in \cite{GK03, GK00, GS18}. If $\underline{g}_{cr} < g < \overline{g}^{cr}$, a finite proportion of eigenvalues are asymptotical to non-random analytic curves in the complex plane. Goldsheid and Khoruzhenko \cite{GK00, GK03} demonstrated that these curves converge to non-random limiting curves as $n\rightarrow \infty$. If $g > \overline{g}^{cr}$, nearly all eigenvalues shift away from the real axis. Here, $\underline{g}_{cr}$ and $\overline{g}^{cr}$ are defined in \eqref{gcr}. In particular, the results in \cite{GK03, GK00, GS18} imply that $\Sigma(g)$ cannot be approximated by $\Sigma_{n,x}(g)$.

In comparison to Theorem \ref{transition}, \cite{GK03, GK00} provides the real-complex spectrum transition for the limit of the spectrum of $\mathcal{H}_{n}(g,x)$ as $n\rightarrow\infty$ for selfaveraging potentials, while Theorem \ref{transition} gives a real-complex spectrum transition of the Hatano-Nelson model on $\ell^2(\Z)$ for strictly ergodic potentials. According to Davies's result \cite{Dav01A}, the spectrum of $\mathcal{H}_{n}(g,x)$ does not converges to the spectrum of $\mathcal{H}(g)$ for random potentials. A natural question is whether the spectrum of $\mathcal{H}_{n}(g,x)$ converge to the spectrum of $\mathcal{H}(g)$ for strictly ergodic potentials. We will give an affirmative answer in this paper.

Before that, let us introduce the concept of the density of states measure for $\mathcal{H}(g,x)$, which is a crucial quantity that characterizes the distribution of the spectrum.  Let $\mathrm{d}\mathcal{N}^{g,x}_{n}$ represent the normalized eigenvalue counting measure:
\begin{equation*}
	\mathrm{d}\mathcal{N}^{g,x}_{n} = \frac{1}{n}\sum_{j=1}^{n}\delta_{E_{j}(g,x)}.
\end{equation*}
As proved by Goldsheid and Khoruzhenko \cite{GK00}, for ergodic potential,  $\mathrm{d}\mathcal{N}^{g,x}_{n}$ weakly converges to the measure $\mathrm{d}\mathcal{N}^{g}$, which we call the  density of states  of $\mathcal{H}(g,x)$. Avron and Simon \cite{AS83} proved that for self-adjoint ergodic Schr\"odinger operators, the almost surely spectrum is the support of the density of states measure, see also \cite{DFbook}, thereby guaranteeing the convergence of the spectrum as $n\rightarrow\infty$. This is not the case for the Hatano-Nelson model with random potential. However, we prove it is also the case for the Hatano-Nelson model with strictly ergodic potential, which implies that the limit of the spectrum of  $\mathcal{H}_n(g,x)$ coincides with the spectrum of $\mathcal{H}(g,x)$.

\begin{theorem}\label{asy}
Under the assumption of Theorem \ref{transition}, assume further that $L(E)$ is continuous w.r.t. $E$. Then  
\begin{equation*}
\supp \mathrm{d}\mathcal{N}^{g} =\Sigma(g).
\end{equation*}	
\end{theorem}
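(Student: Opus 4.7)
My plan is to combine Theorem \ref{spec} with an explicit computation of $\supp \mathrm{d}\mathcal{N}^g$ via a Thouless-type potential. Since Theorem \ref{spec} gives $\Sigma(g) = \mathcal{E}_0 \cup (\Sigma(0)\cap \mathcal{E}_+)$, it suffices to prove
\[
\supp \mathrm{d}\mathcal{N}^g \;=\; \mathcal{E}_0 \cup (\Sigma(0)\cap \mathcal{E}_+).
\]

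The first main step is to establish the Thouless-type identity
\[
\Phi(E) \;:=\; \int_{\mathbb{C}} \log|E-z|\,\mathrm{d}\mathcal{N}^g(z) \;=\; \max\bigl(L(E),\, g\bigr), \qquad E\in\mathbb{C}.
\]
Starting from the factorisation $\det(\mathcal{H}_n(g,x)-EI) = \prod_{j=1}^n (E_j(g,x)-E)$, which gives
\[
\frac{1}{n}\log|\det(\mathcal{H}_n(g,x)-EI)| \;=\; \int_{\mathbb{C}}\log|E-z|\,\mathrm{d}\mathcal{N}^{g,x}_n(z),
\]
I would expand the periodic-boundary determinant through the Floquet trace identity, writing $\det(\mathcal{H}_n(g,x)-EI)$ as a linear combination of $\mathrm{tr}(A_n^g(x))$ and boundary contributions of order $e^{\pm ng}$. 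A diagonal gauge transformation identifies $A_n^g$ with the self-adjoint cocycle $A_n$ up to a scalar exponential factor, so strict ergodicity combined with the assumed continuity of $L$ yields the uniform-in-$x$ and locally uniform-in-$E$ limit $\tfrac{1}{n}\log|\det(\mathcal{H}_n(g,x)-EI)| \to \max(L(E),g)$, the dominant exponential in the trace identity being $e^{n\max(L(E),g)}$. The upgrade from this pointwise limit to the logarithmic-potential identity uses the weak convergence $\mathrm{d}\mathcal{N}^{g,x}_n \to \mathrm{d}\mathcal{N}^g$ together with the uniform boundedness of the spectra $\Sigma_{n,x}(g)$ in a fixed compact $K\subset\mathbb{C}$, which allows one to handle the logarithmic singularity by equi-integrability.

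The second main step is to read off $\supp \mathrm{d}\mathcal{N}^g$ by taking the distributional Laplacian in $E$:
\[
\mathrm{d}\mathcal{N}^g \;=\; \frac{1}{2\pi}\,\Delta_E \max(L(E),\, g).
\]
I would partition $\mathbb{C} = \mathcal{E}_- \sqcup \mathcal{E}_0 \sqcup \mathcal{E}_+$ and analyse each piece. On the open set $\mathcal{E}_-$, $\max(L,g) = g$ is constant and harmonic, so $\mathrm{d}\mathcal{N}^g \equiv 0$. On the open set $\mathcal{E}_+$, $\max(L,g) = L$, and the classical self-adjoint Thouless formula $L(E)=\int \log|E-z|\,\mathrm{d}\mathcal{N}^0(z)$ gives $\tfrac{1}{2\pi}\Delta L = \mathrm{d}\mathcal{N}^0$; hence $\mathrm{d}\mathcal{N}^g|_{\mathcal{E}_+} = \mathrm{d}\mathcal{N}^0|_{\mathcal{E}_+}$, whose support is $\Sigma(0)\cap \mathcal{E}_+$ by the Avron-Simon theorem. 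On $\mathcal{E}_0$, Lemma \ref{analytic} together with continuity of $L$ shows (cf. Corollary \ref{contour}) that $\mathcal{E}_0$ is a finite union of smooth analytic contours, and the strict monotonicity $\partial_\eta L(E+\mathrm{i}\eta)>0$ forces $L$ to cross the level $g$ transversally, so the kink formula for the Laplacian of $\max(L,g)$ places a strictly positive singular measure along the entirety of $\mathcal{E}_0$. Combining the three contributions,
\[
\supp \mathrm{d}\mathcal{N}^g \;=\; (\Sigma(0)\cap \mathcal{E}_+)\cup \mathcal{E}_0 \;=\; \Sigma(g),
\]
where the last equality is Theorem \ref{spec}.

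The main obstacle will be the first step. Because the Hatano-Nelson cocycle is not uniformly hyperbolic, cancellations in $\mathrm{tr}(A_n^g)$ may occur, and one must argue carefully that the dominant exponential in the periodic-boundary trace identity realises $\max(L(E),g)$ for a strictly ergodic orbit rather than dropping below it. The continuity of $L$ in $E$ will play a double role: it upgrades the unique-ergodicity convergence to local uniformity in $E$, and it guarantees the transverse crossing on $\mathcal{E}_0$ that generates nonzero Laplacian mass on the contours. The analogous computation in the random setting was performed by Goldsheid and Khoruzhenko \cite{GK00, GK03}; here strict ergodicity of $(X,T)$ replaces their probabilistic averaging.
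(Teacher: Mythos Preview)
Your strategy coincides with the paper's: establish $\int_{\mathbb{C}}\log|z-E|\,\mathrm{d}\mathcal{N}^g(z) = \max(L(E),g)=:L_+(E)$, hence $\mathrm{d}\mathcal{N}^g=\frac{1}{2\pi}\Delta L_+$, and then read off the support on each of $\mathcal{E}_-,\mathcal{E}_0,\mathcal{E}_+$ before invoking Theorem \ref{spec}. The arguments on $\mathcal{E}_\pm$ are identical to the paper's.

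Two differences are worth flagging. First, the paper does not re-derive the Thouless identity but quotes it directly from Goldsheid--Khoruzhenko \cite{GK00} (recorded as Theorem \ref{GKIDS} and Corollary \ref{Cthouless}, the latter via the Lower Envelope Theorem). Your sketch of the determinant asymptotics and the passage through the logarithmic singularity is plausible but would need the same potential-theoretic input, so nothing is saved by redoing it.

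Second, and more substantively, on $\mathcal{E}_0$ the paper replaces your kink/transversality argument by the extremum principle: if $E\in\mathcal{E}_0\setminus\supp\mathrm{d}\mathcal{N}^g$, then $L_+$ is harmonic near $E$ and attains its minimum $g$ there, so $L_+\equiv g$ on a neighbourhood, contradicting $L(E+\mathrm{i}\eta)>g$ from Lemma \ref{analytic}. This handles every point of $\mathcal{E}_0$ uniformly. Your transversality claim, by contrast, rests on $\partial_\eta L(E+\mathrm{i}\eta)>0$, which Lemma \ref{analytic} only gives for $\eta>0$; on the analytic arcs off the real axis the kink formula is fine, but real points of $\mathcal{E}_0$ (in particular those lying in $\Sigma(0)$, where $L$ need not be harmonic) require a separate argument that you have not supplied. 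Your appeal to Corollary \ref{contour} is also misplaced: that result assumes $g>\overline{g}^{cr}$, which is not part of the hypothesis here, and finiteness of the contours plays no role in either proof.
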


For a real-valued i.i.d. random potential, it is expected that $\supp\mathrm{d}\mathcal{N}^{g}=\mathcal{E}_{0}\cup (\Sigma(0)\cap\mathcal{E}_{+})$. Goldsheid and Khoruzhenko \cite{GK00,GK03} proved that, with probability one, the following holds: for every bounded continuous function $f$,
\begin{equation*}
\lim_{n\rightarrow \infty} \int_{\mathbb{C}}f(E) \mathrm{d} \mathcal{N}^{g,x}_{n}=\int_{\Sigma(0)\cap\mathcal{E}_{+}} f(E) \mathrm{d}N+ \int_{\mathcal{E}_{0}} f(E(l)) \rho(E(l)) \mathrm{d}l,
\end{equation*}
where $\mathrm{d}l$ is the arc-length element on $\mathcal{E}_{0}$ and 
$
\rho(E)=\frac{1}{2\pi}  \big| \int_{\mathbb{R}} \frac{\mathrm{d}N(\lambda)}{\lambda-E}\big| \ \text{for}\ E\notin\mathbb{R}.
$ Recently, Goldsheid and Sodin \cite{GS18} showed that  if $j$ is fixed and $g$ varies from $0$ to $\infty$, the eigenvalue $E_{j}(g,x)$ remains real and exponentially close to $E_{j}(0,x)$ for $g \leqslant  L(E_{j}(0,x)) - \epsilon$.
Compare to \cite{GK00,GK03,GS18}, Theorem \ref{asy} points out the exact location of the limit of $\Sigma_{n,x}(g)$ for all parameter $g>0$.

\section{Preliminary}

\subsection{Cocycles and uniform hyperbolicity}
In the rest of the paper, we always assume that $X$ is a topological space and 
$(X,\mu,T)$ is ergodic. For $A\in C^{0}(X,\mathrm{GL}(2,\mathbb{C}))$, one can define the cocycle $(T, A)$ as follows:
\begin{eqnarray*}
(T, A):\left\{\begin{array}{c}
X \times \mathbb{C}^2 \rightarrow X \times \mathbb{C}^2, \\
(x, v) \mapsto(Tx, A(x) \cdot v).
\end{array}\right.	
\end{eqnarray*}	
The iterates of $(T, A)$ are of the form $(T, A)^n=(T^{n}, A_n)$, where
\begin{eqnarray*}
A_n(x):= \begin{cases}A(T^{n-1}x) \cdots A(Tx) A(x), & n > 0,\\
I_{2},	&n=0, \\
A(T^{-n}x)^{-1} \cdots A(T^{-1}x)^{-1} , & n<0.\end{cases}
\end{eqnarray*}
The (maximum) Lyapunov exponent of $(T,A)$ is defined as
\begin{equation*}
L(T,A)=\lim_{n\rightarrow \infty}\int_{X} \frac{1}{n}\log \|A_{n}(x)\| \mathrm{d}\mu(x).
\end{equation*}
The existence of the above limit is ensured by the Kingman subadditive ergodic theorem \cite{King73}. For $X=\mathbb{T}^{d}$ and $T:x\rightarrow x+\alpha$ with $(1,\alpha)$ rationally independent, we also denote $L(\alpha, A):=L(T,A)$.

Recall that $(T, A)$ is uniformly hyperbolic if there exist $u,s\in C^{0}(X,\mathbb{PC}^{2})$, called the unstable and stable directions, and $n \geqslant 1$ such that
\begin{enumerate}[font=\normalfont, label={(\arabic*)}]
\item for every $x \in X$ we have	$A(x)\cdot u(x)=u(Tx)$ and $A(x) \cdot s(x)=s(Tx),$
\item for every unit vector $w \in s(x)$ we have $\|A_n(x)  w\|<1$,
\item for every unit vector $w \in u(x)$ we have $\|A_n(x) w\|>1$.
\end{enumerate}
From now on, $(T, A)\in \mathcal{UH}$ means that $(T,A)$ is uniformly hyperbolic.

\begin{lemma}[\cite{av11}, Lemma 2.3]\label{UHharmonic}
Let $X$ be a compact metric space and $T: X\rightarrow X$ be a homeomorphism, and let $\mu$ be a probability measure invariant by $T$. The Lyapunov exponent $A\mapsto L(T,A)$ is a plurisubharmonic function of $A\in C^{0}(X,\mathrm{SL}(2,\mathbb{C}))$. Moreover, restricted to the set of uniformly hyperbolic $A$, it is a pluriharmonic function. 
\end{lemma}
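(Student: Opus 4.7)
The plan is to treat the two assertions separately. For plurisubharmonicity of $A\mapsto L(T,A)$, the key observation is that for each fixed $x\in X$ and $n\geqslant 1$, the product $A_{n}(x)=A(T^{n-1}x)\cdots A(x)$ is a polynomial of degree $n$ in $A$, viewed as an element of the complex Banach space $C^{0}(X,\mathrm{SL}(2,\mathbb{C}))$, hence entire holomorphic in $A$. Since $\log\|M\|=\sup_{\|v\|=1}\log\|Mv\|$ on nonzero matrices, and $\log\|M(z)v\|$ is plurisubharmonic whenever $z\mapsto M(z)v$ is a nonvanishing holomorphic $\mathbb{C}^{2}$-valued function, the pointwise supremum $A\mapsto\log\|A_{n}(x)\|$ is plurisubharmonic. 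Integration against $\mu$ preserves plurisubharmonicity, so each
\begin{equation*}
L_{n}(A):=\frac{1}{n}\int_{X}\log\|A_{n}(x)\|\,d\mu(x)
\end{equation*}
is plurisubharmonic on $C^{0}(X,\mathrm{SL}(2,\mathbb{C}))$.

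Next I would pass to the limit. By submultiplicativity $\log\|A_{m+n}(x)\|\leqslant\log\|A_{m}(T^{n}x)\|+\log\|A_{n}(x)\|$; integrating and using $T$-invariance of $\mu$, the sequence $nL_{n}(A)$ is subadditive, so $L(T,A)=\inf_{n}L_{n}(A)$ is a pointwise decreasing limit of plurisubharmonic functions. Such a limit is automatically upper semi-continuous, and the sub-mean-value inequality survives monotone convergence, so $L$ is plurisubharmonic on all of $C^{0}(X,\mathrm{SL}(2,\mathbb{C}))$.

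For pluriharmonicity on $\mathcal{UH}$ I would exploit the invariant splitting. On the open set $\mathcal{UH}$, the continuous invariant line fields $u,s\in C^{0}(X,\mathbb{PC}^{2})$ depend holomorphically on $A$: a complexified cone-field argument produces a contraction on a neighborhood in $C^{0}(X,\mathbb{PC}^{2})$ that depends analytically on $A$, and the analytic implicit function theorem then yields a holomorphic fixed point. Locally around any $A_{0}\in\mathcal{UH}$ one can lift the unstable direction to a section $\hat u(A,x)\in\mathbb{C}^{2}\setminus\{0\}$ that is holomorphic in $A$ and continuous in $x$, so that $A(x)\hat u(A,x)=\lambda(A,x)\,\hat u(A,Tx)$ for a nowhere-vanishing holomorphic multiplier $\lambda(A,x)$. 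Telescoping and applying Birkhoff's ergodic theorem to the continuous function $\log|\lambda(A,\cdot)|$ identifies
\begin{equation*}
L(T,A)=\int_{X}\log|\lambda(A,x)|\,d\mu(x).
\end{equation*}
Since $\log|\lambda(A,x)|$ is pluriharmonic in $A$ for each fixed $x$ (log-modulus of a nonvanishing holomorphic function), and the integrand is locally uniformly bounded so that $\partial\bar\partial$ commutes with $\int_{X}\cdot\,d\mu$, the integral is pluriharmonic in $A$ on $\mathcal{UH}$.

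The main obstacle is the holomorphic dependence of $u(A,x)$, equivalently of the multiplier $\lambda(A,x)$, on the cocycle $A$. The classical persistence of hyperbolic splittings is a real/$C^{r}$ statement, and one must carefully complexify the cone-field argument so that the graph-transform contraction lives on an open subset of the complex Banach manifold $C^{0}(X,\mathbb{PC}^{2})$ and depends analytically on $A$; only then does the analytic implicit function theorem produce a holomorphic invariant section over $X$ without monodromy obstructions. Once this is set up, the identification of $L(T,A)$ with $\int\log|\lambda|\,d\mu$ and the exchange of $\partial\bar\partial$ with the $\mu$-integral are routine.
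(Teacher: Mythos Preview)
The paper does not give its own proof of this lemma: it is quoted verbatim as Lemma~2.3 of \cite{av11} and used as a black box, so there is no argument in the paper to compare your proposal against.

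That said, your sketch is essentially the standard proof and is sound in outline. Two small points are worth tightening. First, $C^{0}(X,\mathrm{SL}(2,\mathbb{C}))$ is a complex Banach \emph{manifold}, not a Banach space, so ``polynomial of degree $n$ in $A$'' should be phrased as holomorphy of $A\mapsto A_{n}(x)$ into $M_{2}(\mathbb{C})$ along complex discs; the conclusion is unchanged. Second, the passage from the $L_{n}$ to $L$ needs a word: the full sequence $(L_{n})$ is not monotone, only subadditive, and an arbitrary infimum of plurisubharmonic functions need not be plurisubharmonic. The clean fix is to note that $L_{kn}\leqslant L_{n}$, so for instance $(L_{n!})$ is decreasing with the same infimum $L$, and a decreasing limit of plurisubharmonic functions is plurisubharmonic. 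Your identification of the main obstacle in the $\mathcal{UH}$ part, namely the holomorphic dependence of the invariant splitting on $A$, is exactly right; once that is established, the formula $L(T,A)=\int_{X}\log|\lambda(A,x)|\,d\mu(x)$ and pluriharmonicity follow as you indicate.
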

\subsection{Dynamical defined operators}

We consider the complex-valued dynamical defined Hatano-Nelson model
\begin{equation*}
[\mathcal{H}(g,x)u]_{n}=-\mathrm{e}^{g}u_{n+1}-\mathrm{e}^{-g}u_{n-1}+v(T^{n}x) u_{n}.
\end{equation*}
If $g=0$, it reduces to the complex-valued dynamical defined Schr\"odinger operator:
\begin{equation*}
[H(x)\psi]_n=-\psi_{n+1}-\psi_{n-1}+v(T^nx),\quad n\in \mathbb{Z}.
\end{equation*}
We have the following relation regarding the solutions of eigenfunction equations of the two operators.
\begin{lemma}\label{subs}
Let $x\in X$, $g> 0$, and $E\in\mathbb{C}$. Then $u$ satisfies $\mathcal{H}(g,x)u=Eu$ if and only if $H(x)\psi=E\psi$ where $\psi=(\psi_{n})$ and  $\psi_{n}=\mathrm{e}^{ng}u_{n}$.
\end{lemma}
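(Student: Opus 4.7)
The plan is to prove this by a direct gauge transformation, since the only difference between $\mathcal{H}(g,x)$ and $H(x)$ is the asymmetric weighting $\mathrm{e}^{\pm g}$ on the off-diagonal hopping terms, which can be absorbed into the wave function by multiplying by an exponential factor $\mathrm{e}^{ng}$. This is the standard Hatano--Nelson ``imaginary gauge transformation'' trick, and is entirely a computation at the level of the finite-difference equation.

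More precisely, first I would assume $\mathcal{H}(g,x)u = Eu$, write out the recurrence
\begin{equation*}
-\mathrm{e}^{g}u_{n+1}-\mathrm{e}^{-g}u_{n-1}+v(T^{n}x)u_{n}=Eu_{n},
\end{equation*}
substitute $u_{n}=\mathrm{e}^{-ng}\psi_{n}$, and then multiply both sides by $\mathrm{e}^{ng}$. The factors $\mathrm{e}^{g}\cdot\mathrm{e}^{-(n+1)g}\cdot\mathrm{e}^{ng}$ in the first term and $\mathrm{e}^{-g}\cdot\mathrm{e}^{-(n-1)g}\cdot\mathrm{e}^{ng}$ in the second term both collapse to $1$, while the potential and energy terms are unaffected because they are diagonal. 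This yields exactly $-\psi_{n+1}-\psi_{n-1}+v(T^{n}x)\psi_{n}=E\psi_{n}$, i.e.\ $H(x)\psi=E\psi$.

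The converse direction is the same computation read backwards: from $H(x)\psi=E\psi$ set $u_{n}=\mathrm{e}^{-ng}\psi_{n}$ and verify that $\mathcal{H}(g,x)u=Eu$. Since the map $(u_{n})\mapsto(\mathrm{e}^{ng}u_{n})$ is an invertible (though not unitary when $g\neq 0$) formal transformation on sequences, the equivalence is exact. There is no genuine obstacle here; the lemma is essentially a change-of-variables identity, and the main point is to record it so that in the sequel the non-self-adjoint operator $\mathcal{H}(g,x)$ can be related at the level of generalized eigenfunctions to the self-adjoint operator $H(x)$, which in turn justifies working with the transfer matrices $A_{n}^{g}(x)$ and connecting them to the Lyapunov exponent $L(E)$ of the $g=0$ cocycle.
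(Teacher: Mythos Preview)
Your proposal is correct and is precisely the intended computation; the paper in fact states this lemma without proof, treating it as an immediate change-of-variables identity. Your written-out verification of how the exponential factors cancel is exactly what underlies the omitted argument.
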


Since $H(x)u=Eu$  naturally defines a discrete dynamical system
\begin{equation*}
\begin{pmatrix}
\psi_{n+1}\\
\psi_{n}
\end{pmatrix}=\begin{pmatrix}
v(T^n x)-E&-1\\
1&0
\end{pmatrix}\begin{pmatrix}
\psi_{n}\\
\psi_{n-1}
\end{pmatrix},
\end{equation*}
which derives an $\mathrm{SL}(2,\mathbb{C})$-valued cocycle $(T,S_{E})$ with $
S_{E}(x)=\begin{pmatrix}
v(x)-E & -1 \\
1 & 0
\end{pmatrix}$.
We call $(T, S_{E})$ the Schr\"odinger cocycle.  Similarly, one can also define the  Hatano-Nelson cocycle $(T,S_{E}^{g})$ where $
S_{E}^{g}(x)= \begin{pmatrix}
\mathrm{e}^{-g}[v(x)-E]&-\mathrm{e}^{-2g}\\
1&0
\end{pmatrix}$. Moreover, if $T: x\rightarrow x+\alpha$, then we denote $(T, S_{E})$ (resp. $(T, S_{E}^{g})$) as $(\alpha, S_{E})$ (resp. $(\alpha, S_{E}^{g})$).

Denote by $\Sigma_{x}(g)$  the spectrum of $\mathcal{H}(g,x)$. If $T$ is minimal, then there exists a subset $\Sigma(g)\subseteq\mathbb{C}$ such that $\Sigma_{x}(g)=\Sigma(g)$ for all $x\in X$ \cite{BLLS}. In the self-adjoint case, i.e. the potential $v$ is real-valued and $g=0$, then by the well-known result of Johnson \cite{John86},  $E\notin \Sigma (0)$ if and only if $(T,A)\in\mathcal{UH}$. This result can be generalized  to the non-self-adjoint case:
\begin{proposition}[\cite{GJYZ}, Theorem 2.4]\label{equi} 
Let $(X,T)$ be minimal and $v\in C^{0}(X,\mathbb{C})$. Then $E\notin \Sigma(0)$ if and only if $(T,A)\in \mathcal{UH}$.
\end{proposition}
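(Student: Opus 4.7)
The plan is to prove the two implications separately, following the blueprint of Johnson's classical theorem but adapting carefully to the non-self-adjoint setting, in which $v$ is complex-valued and $H(x)$ is not normal.

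For the direction $(T,A)\in\mathcal{UH}\Rightarrow E\notin\Sigma(0)$, I would exploit the continuous invariant splitting $\mathbb{C}^{2}=u(x)\oplus s(x)$ produced by uniform hyperbolicity. Reading this splitting at the level of the difference equation $H(x)\psi=E\psi$ yields two linearly independent solutions $\psi^{\pm}(\cdot;x)$ that decay exponentially at $\pm\infty$, with rates uniform in $x\in X$ by compactness. Their Wronskian is uniformly bounded away from zero because $u(x)\ne s(x)$ for every $x$. Assembling the usual Green's function from $\psi^{\pm}$ gives an exponentially decaying kernel, hence a bounded inverse of $H(x)-E$ on $\ell^{2}(\mathbb{Z})$, so $E\notin\Sigma_{x}(0)=\Sigma(0)$.

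For the converse, I would argue by a Combes--Thomas estimate combined with a dichotomy-from-decay lemma. If $E\notin\Sigma(0)$, then by minimality and \cite{BLLS} the resolvent $(H(x)-E)^{-1}$ is bounded uniformly in $x$. A Combes--Thomas argument adapted to this non-self-adjoint tridiagonal operator then yields uniform exponential off-diagonal decay of the Green's function,
\begin{equation*}
|G_{E}(n,m;x)|\leqslant Ce^{-c|n-m|},\qquad n,m\in\mathbb{Z},\ x\in X,
\end{equation*}
for some $C,c>0$. From this decay one recovers an exponential dichotomy for the transfer matrix cocycle: the stable direction at $x$ is the one-dimensional subspace of initial data $(\psi_{0},\psi_{-1})$ whose forward iterates generate the decaying Green's column, and the unstable direction is obtained symmetrically from the backward column. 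Continuity in $x$ of these subspaces follows from continuity of $v$ and of the cocycle, and the uniformity of the decay promotes the dichotomy to uniform hyperbolicity in the sense defined in the preliminaries.

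The principal obstacle is this second direction: Johnson's original self-adjoint proof rests on spectral measures and Shnol/Weyl-sequence arguments that are not available for complex $v$, so one cannot directly pass from polynomially bounded generalized eigenfunctions to genuine spectrum. The Combes--Thomas route sidesteps this, but it demands a quantitative resolvent bound that is uniform over the base $X$; securing this uniformity (using minimality and the identification $\Sigma_{x}(0)=\Sigma(0)$ from \cite{BLLS}), and then upgrading the measurable splitting obtained from Green's columns to a continuous one compatible with the cocycle structure, is the delicate point and the step I would verify most carefully.
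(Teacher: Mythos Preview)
The paper does not prove this proposition; it is quoted as Theorem~2.4 of \cite{GJYZ} and used as a black box. So there is no in-paper argument to compare against.

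Your outline is a sound route to the result. The forward direction ($\mathcal{UH}\Rightarrow E\notin\Sigma(0)$) is standard, and your Green's-function construction from the stable/unstable solutions works verbatim for complex $v$. For the converse, the Combes--Thomas step is legitimate in the non-self-adjoint setting because its proof uses only boundedness of the resolvent and the tridiagonal structure, not any spectral theorem. Two remarks on the points you flag as delicate. First, the uniformity in $x$ comes out more cheaply than you suggest: since $v\in C^{0}(X,\mathbb{C})$ and $X$ is compact, $x\mapsto H(x)$ is norm-continuous, hence so is $x\mapsto (H(x)-E)^{-1}$, and compactness of $X$ gives the uniform bound directly; minimality via \cite{BLLS} is only needed to know the resolvent exists for \emph{every} $x$. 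Second, the splitting you recover from the Green's columns is already continuous for the same reason, so no upgrade from measurable to continuous is required. The one point that genuinely needs a line of argument is the transversality of $\psi^{+}$ and $\psi^{-}$ at every $x$: a coincidence would yield a nonzero $\ell^{2}$ solution of $H(x)\psi=E\psi$, hence $E\in\Sigma_{x}(0)=\Sigma(0)$, a contradiction.
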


\subsection{Subharmonic functions}

The following basic properties of the subharmonic functions in potential theory are well-known.
\begin{lemma}[\cite{Ransford}, Theorem 3.1.2, Corollary 3.7.5]\label{potential}
For the finite Borel measure $\mu$ on $\mathbb{C}$ with compact support, the  potential of $\mu$  defined by
\begin{equation*}
p_{\mu}(z) = \int_{\mathbb{C}} \log |\zeta-z| \mathrm{d}\mu(\zeta)
\end{equation*}
has the following properties:
\begin{enumerate}[font=\normalfont, label={(\arabic*)}]
\item \label{item:subharm}The potential $p_{\mu}$ is subharmonic on $\mathbb{C}$, and harmonic on $\mathbb{C}\backslash \supp\mu$.
\item \label{item:equ}Let $\mu_1$ and $\mu_2$ be finite Borel measures on $\mathbb{C}$ with compact support. If $p_{\mu_1}=p_{\mu_2}+h$ on an open set $U$, where $h$ is harmonic on $U$, then $\mu_1|_U=\mu_2|_U.$

\item \label{item:growth}$\displaystyle p_{\mu}(z)=\mu(\mathbb{C})\log|z|+O(|z|^{-1})$ as $|z|\rightarrow \infty$.
\end{enumerate}
\end{lemma}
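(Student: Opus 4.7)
The three assertions are classical facts of potential theory, so the plan is to recover them from scratch using only Fubini, differentiation under the integral sign, and the distributional identity $\Delta \log|z| = 2\pi\delta_{0}$. The organizing idea is that $p_{\mu}$ is, in the sense of distributions, the convolution $\log|\cdot| \ast \mu$, and since $\mu$ is finite with compact support every manipulation that treats $p_{\mu}$ as a smooth function can be justified after passing to test functions.

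For part \ref{item:subharm} the plan is first to recall that $z\mapsto \log|\zeta - z|$ is subharmonic on $\mathbb{C}$ and harmonic off $\zeta$. Writing
\begin{equation*}
\frac{1}{2\pi r^{2}}\int_{|w-z|\leqslant r} p_{\mu}(w)\,\mathrm{d}A(w)=\int_{\mathbb{C}}\frac{1}{2\pi r^{2}}\int_{|w-z|\leqslant r}\log|\zeta-w|\,\mathrm{d}A(w)\,\mathrm{d}\mu(\zeta),
\end{equation*}
by Fubini (the integrand is locally integrable and $\mu$ is finite), and using the submean inequality inside, one gets the submean inequality for $p_{\mu}$ at every $z$. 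Upper semicontinuity follows from Fatou applied to $\log|\zeta-z_{n}|$, whose positive part is bounded on any compact set. On $U=\mathbb{C}\setminus \supp\mu$ the integrand is smooth in $z$ uniformly in $\zeta$ on compact subsets of $U$, so one may differentiate under the integral and obtain $\Delta p_{\mu}\equiv 0$ on $U$; together with continuity this gives harmonicity.

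For part \ref{item:growth} the strategy is a direct expansion. Fix $R$ with $\supp\mu\subseteq\{|\zeta|\leqslant R\}$ and take $|z|\geqslant 2R$. Then
\begin{equation*}
\log|\zeta-z|=\log|z|+\operatorname{Re}\log\bigl(1-\zeta/z\bigr)=\log|z|-\operatorname{Re}(\zeta/z)+O(|z|^{-2}),
\end{equation*}
uniformly in $\zeta\in\supp\mu$. Integrating against $\mu$ and noting that $\operatorname{Re}\bigl(\tfrac{1}{z}\int\zeta\,\mathrm{d}\mu\bigr)=O(|z|^{-1})$ yields the claim.

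For part \ref{item:equ}, the key point is $\Delta p_{\mu}=2\pi\mu$ as distributions. The plan is to test against $\varphi\in C_{c}^{\infty}(\mathbb{C})$: by Fubini,
\begin{equation*}
\int p_{\mu}\,\Delta\varphi\,\mathrm{d}A = \int\!\!\int \log|\zeta-z|\,\Delta\varphi(z)\,\mathrm{d}A(z)\,\mathrm{d}\mu(\zeta)= 2\pi\int \varphi(\zeta)\,\mathrm{d}\mu(\zeta),
\end{equation*}
using the classical identity $\int \log|\zeta-z|\Delta\varphi(z)\,\mathrm{d}A(z)=2\pi\varphi(\zeta)$ (proved by excising a small disc around $\zeta$, applying Green's identity, and letting the radius tend to zero). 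Applying this to $\mu_{1}-\mu_{2}$ and using $p_{\mu_{1}}-p_{\mu_{2}}=h$ on $U$ with $\Delta h=0$, one concludes that $\mu_{1}-\mu_{2}$ annihilates every test function supported in $U$, hence $\mu_{1}|_{U}=\mu_{2}|_{U}$. The only genuine delicacy is justifying the interchange of $\Delta$ and the $\mu$-integration near $\supp\mu$, which is exactly where Fubini combined with the local integrability of $\log$ is essential; this is the main technical step of the argument.
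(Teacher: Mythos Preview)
The paper does not prove this lemma at all: it is stated with a citation to Ransford's textbook and used as a black box. Your sketch is correct and is essentially the standard textbook proof (indeed, the argument for \ref{item:equ} via the distributional identity $\Delta p_{\mu}=2\pi\mu$ is exactly how Ransford proves Corollary 3.7.5, and your treatment of \ref{item:subharm} via Fubini and the submean inequality is the content of his Theorem 3.1.2). One minor point: in \ref{item:equ} the hypothesis $p_{\mu_{1}}=p_{\mu_{2}}+h$ should be understood as an equality of locally integrable functions on $U$ (the potentials may equal $-\infty$ on sets of measure zero inside $U$), but this does not affect your distributional argument.
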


\begin{lemma}[\label{schwartz}Lower envelope theorem, \cite{Lan}, Theorem 3.8]
	Suppose that $\{\mathrm{d}\mu_{n}\}_{n\geqslant 1}$ has compact support in $\mathbb{C}$. Denote $\displaystyle p_{n}(z)=\int_{\mathbb{C}} \log |z-\zeta|\mathrm{d}\mu_{n}(\zeta)$.
	Assume that $\mathrm{d}\mu_{n} \rightharpoonup \mathrm{d}\mu$, as $n\rightarrow\infty$. Then
	\begin{equation*}
		\limsup_{n\rightarrow\infty} p_{n}(z)= \int_{\mathbb{C}} \log |z-\zeta|\mathrm{d}\mu(\zeta)\quad\text{ for $m$-a.e. } z\in\mathbb{C}.
	\end{equation*}
\end{lemma}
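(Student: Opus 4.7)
The plan is to prove the two inequalities $\limsup_n p_n(z) \leqslant p_\mu(z)$ for every $z\in\mathbb{C}$ and $\limsup_n p_n(z) \geqslant p_\mu(z)$ for $m$-a.e.\ $z$, where $p_\mu(z):=\int_{\mathbb{C}}\log|z-\zeta|\mathrm{d}\mu(\zeta)$. Throughout I will use the natural hypothesis (implicit in the statement and automatic in every application in this paper, since $\supp\mathrm{d}\mathcal{N}^{g,x}_{n}$ lies in a fixed disk controlled by the operator norm of $\mathcal{H}_{n}(g,x)$) that there is a common compact $K_0\subset\mathbb{C}$ with $\supp\mu_n,\supp\mu\subset K_0$ for all $n$, and that $\mu_n(\mathbb{C})$ is uniformly bounded.

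For the everywhere upper bound, fix $z\in\mathbb{C}$ and for $M>0$ set $\phi_M(\zeta):=\max\{\log|z-\zeta|,-M\}$, which is bounded and continuous on $\mathbb{C}$. Weak convergence gives $\int\phi_M\mathrm{d}\mu_n\to\int\phi_M\mathrm{d}\mu$, and since $\log|z-\zeta|\leqslant\phi_M(\zeta)$, taking $\limsup$ yields $\limsup_n p_n(z)\leqslant\int\phi_M\mathrm{d}\mu$. As $M\uparrow\infty$, $\phi_M\downarrow\log|z-\cdot|$ pointwise, and monotone convergence delivers $\int\phi_M\mathrm{d}\mu\downarrow p_\mu(z)$, so $\limsup_n p_n(z)\leqslant p_\mu(z)$ for every $z$.

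For the almost-everywhere reverse inequality, I would upgrade weak convergence of $\mu_n$ to convergence of integrated potentials. Fix a compact $K\subset\mathbb{C}$ and define $F_K(\zeta):=\int_K\log|z-\zeta|\mathrm{d}m(z)$. Because $\log|\cdot|\in L^1_{\mathrm{loc}}(\mathbb{R}^2)$, the function $F_K$ is continuous on $\mathbb{C}$ and bounded on $K_0$. Fubini gives $\int_K p_n\mathrm{d}m=\int F_K\mathrm{d}\mu_n$, so weak convergence produces $\int_K p_n\mathrm{d}m\to\int_K p_\mu\mathrm{d}m$. On the other hand, the uniform-support assumption yields a finite constant $C_K$ with $p_n(z)\leqslant C_K$ for $z\in K$, so reverse Fatou applied to $C_K-p_n$ gives $\limsup_n\int_K p_n\mathrm{d}m\leqslant\int_K\limsup_n p_n\mathrm{d}m$. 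Combining these two displays, $\int_K p_\mu\mathrm{d}m\leqslant\int_K\limsup_n p_n\mathrm{d}m$, while the previous step yields the pointwise opposite inequality. Hence the nonnegative integrand $p_\mu-\limsup_n p_n$ has zero integral over every compact $K$, so it vanishes $m$-a.e.\ on $\mathbb{C}$.

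The main obstacle is the uniform-support control underlying the constant $C_K$: both the Fubini/weak-convergence step and the reverse Fatou step need $p_n|_K$ to admit a uniform integrable upper envelope, and weak convergence with individually compact supports does not by itself deliver this. Once that mild hypothesis is granted, the argument is a clean squeeze between the universal upper bound coming from upper semicontinuity of $\log|z-\cdot|$ and the averaged lower bound produced by reverse Fatou together with the continuity of the kernel $F_K$.
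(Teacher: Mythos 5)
Your argument is correct, and it is worth noting that the paper does not prove this lemma at all: it is quoted verbatim from Landkof (Theorem 3.8), whose proof runs through capacity theory and actually yields the stronger conclusion that the equality holds quasi-everywhere (outside a set of zero logarithmic capacity), of which the $m$-a.e.\ statement is a consequence. Your self-contained route --- the everywhere upper bound via the truncated kernels $\phi_M$ combined with the averaged lower bound via Fubini, continuity of $F_K=\log|\cdot|*\chi_K$, and reverse Fatou --- is the standard elementary ``principle of descent'' squeeze and delivers exactly the a.e.\ version that the paper uses in Corollary \ref{Cthouless}, where weak uniqueness of subharmonic potentials only requires the identity off a Lebesgue-null set. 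Two small points: (i) as you say, the argument needs the $\supp\mu_n$ to lie in a common compact $K_0$; this is indeed implicit in the statement and automatic in the application, since the eigenvalues of $\mathcal{H}_n(g,x)$ lie in the disk of radius $2\cosh g+\|v\|_{C^0}$, and the uniform bound on $\mu_n(\mathbb{C})$ is free from weak convergence (test against the constant $1$); without uniform support control the conclusion genuinely fails, e.g.\ for $\mu_n=(1-\tfrac1n)\delta_0+\tfrac1n\delta_{\exp(n^2)}$. (ii) $\phi_M$ is continuous but not bounded above on all of $\mathbb{C}$; one should cut it off outside a neighborhood of $K_0$ before invoking weak convergence, which changes nothing. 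With these cosmetic repairs the proof is complete.
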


\section{ Hatano-Nelsen model on $\ell^{2}(\mathbb{Z})$}

In this section, we analyze the spectrum set of the  dynamical defined Hatano-Nelson model \eqref{ddhn}, and prove Theorem \ref{spec}. Before giving its proof, we first give a useful comment, as it will be heuristic for the proof. Theorem \ref{spec} says that 
\begin{equation*}
\Sigma(g) = \mathcal{E}_{0} \cup (\Sigma(0)\cap\mathcal{E}_{+}).
\end{equation*}
If $g=0$, $\mathcal{E}_{0}$ corresponds to the Schr\"odinger cocycle $(T, S_{E})$ which has zero Lyapunov exponent, and $\Sigma(0)\cap\mathcal{E}_{+}$ corresponds to the Schr\"odinger  cocycle $(T, S_{E})$  which is non-uniformly hyperbolic. Once we have this, 
Theorem \ref{spec}   reduces  to the  following two propositions.
\begin{proposition}\label{supset}
Let $(X,T)$ be strictly ergodic and $X$ be a compact and connected metric space. Let $v\in C^{0}(X,\mathbb{C})$ and $g> 0$. Then
\begin{equation*}
\mathcal{E}_{0} \cup (\Sigma(0) \cap \mathcal{E}_{+})\subseteq\Sigma(g).
\end{equation*}
\end{proposition}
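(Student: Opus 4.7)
My strategy is to produce, for each $E\in \mathcal{E}_0\cup(\Sigma(0)\cap \mathcal{E}_+)$, an approximate eigenvector (Weyl sequence) $\{u^{(k)}\}\subset \ell^{2}(\mathbb{Z})$ for $\mathcal{H}(g,x)-E$. Because minimality forces $\Sigma_x(g)=\Sigma(g)$ to be $x$-independent, the base point $x$ can be chosen in any $\mu$-full-measure set, in particular where Oseledets' theorem applies. Throughout, the essential computational tool is the identity obtained from Lemma \ref{subs}: if $u_n = e^{-ng}\psi_n$ then
$$\bigl[(\mathcal{H}(g,x)-E)u\bigr]_n = e^{-ng}\bigl[(H(x)-E)\psi\bigr]_n.$$

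For $E\in\mathcal{E}_0$, i.e.\ $L(E)=g$, I will work directly with the Hatano--Nelson cocycle $(T,S_E^g)$. A direct computation shows $S_E^g = e^{-g} D S_E D^{-1}$ with the constant matrix $D=\mathrm{diag}(e^{-g/2},e^{g/2})$, so its Lyapunov exponents are $\{L(E)-g,-L(E)-g\}=\{0,-2g\}$. Oseledets' theorem then provides, at $\mu$-a.e.\ $x$, a nonzero $v(x)\in\mathbb{C}^2$ in the Oseledets line with LE zero; since distinct exponents make that line two-sided invariant, the iterates $\|(S_E^g)_n(x)v(x)\|$ grow subexponentially as $n\to\pm\infty$. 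Setting $(u_0,u_{-1})=v(x)$ and iterating yields a bilateral solution $(u_n)_{n\in\mathbb{Z}}$ of $\mathcal{H}(g,x)u=Eu$ with $\tfrac{1}{|n|}\log|u_n|\to 0$. Truncating $u^{(k)}_n = u_n\,\mathbf{1}_{|n|\le L_k}$, the residual is supported on the four boundary sites $\pm L_k$, $\pm L_k\mp 1$ with values bounded by $|u_{\pm L_k}|$, while $\|u^{(k)}\|^2\ge |u_0|^2>0$; subexponentiality then forces $\|(\mathcal{H}(g,x)-E)u^{(k)}\|/\|u^{(k)}\|\to 0$, so $E\in\Sigma(g)$.

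For $E\in\Sigma(0)\cap \mathcal{E}_+$, i.e.\ $L(E)>g$ with $E\in\Sigma(0)$, Proposition \ref{equi} shows $(T,S_E)\notin \mathcal{UH}$. Using $L(E)>0$ together with strict ergodicity, I will construct compactly supported approximate eigenfunctions $\psi^{(k)}\in\ell^{2}(\mathbb{Z})$ for $H(x)-E$ that are exponentially concentrated at a localization center (shifted to $0$ by minimality) with decay rate $\approx L(E)$, and that satisfy $\|(H(x)-E)\psi^{(k)}\|_{\ell^2}\to 0$. This step combines finite-box eigenvectors of $H_{[-L_k,L_k]}(x)$ whose eigenvalues approach $E$ with the Oseledets' stable direction to enforce the localization rate. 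Defining $u^{(k)}_n=e^{-ng}\psi^{(k)}_n$, the condition $L(E)>g$ makes both tails of the bilateral sum $\|u^{(k)}\|_{\ell^2}^2=\sum_n e^{-2ng}|\psi^{(k)}_n|^2$ geometric and summable, so $\|u^{(k)}\|^2=O(1)$, while the weighted residual norm is dominated by boundary contributions of order $e^{-2L_k(L(E)-g)}$, which vanish. The Weyl quotient then tends to zero and $E\in\Sigma(g)$.

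The principal technical obstacle lies in the second case: producing approximate eigenfunctions of $H(x)$ at a given $E\in\Sigma(0)$ with the precise exponential-decay rate $\approx L(E)$ is straightforward under an Anderson-localization hypothesis, but in the general strictly ergodic setting it requires a careful ergodic-dynamical construction. I expect to handle this via finite-volume spectral theory together with Oseledets' directions at $\mu$-a.e.\ base point, exploiting minimality to translate the localization center to the origin and strict ergodicity to control error terms uniformly along the dynamics.
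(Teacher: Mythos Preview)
Your approach diverges from the paper's, and the second case contains a genuine gap.

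For $E\in\mathcal{E}_0$, your Oseledets-based construction is essentially sound but the final step is incomplete: subexponential growth of $(u_n)$ together with the lower bound $\|u^{(k)}\|\ge|u_0|$ does \emph{not} force the Weyl quotient to vanish, since the boundary terms $|u_{\pm L_k}|$ may themselves grow. What you need is a subsequence $L_k$ along which $(|u_{L_k}|^2+|u_{L_k+1}|^2+|u_{-L_k}|^2+|u_{-L_k-1}|^2)/\sum_{|n|\le L_k}|u_n|^2\to 0$; such a subsequence exists because a uniform positive lower bound on that ratio would force exponential growth of the partial sums, contradicting subexponentiality. This is a fixable omission.

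The real difficulty is your treatment of $E\in\Sigma(0)\cap\mathcal{E}_+$. You correctly flag it as the ``principal technical obstacle'' and then defer it: manufacturing approximate eigenfunctions of $H(x)$ that decay on \emph{both} sides at rate $\approx L(E)$ is tantamount to an Anderson-localization statement, and in the general strictly ergodic, merely continuous, possibly complex-valued setting there is no reason for such eigenfunctions to exist. The Oseledets stable direction decays to the right at rate $L(E)$ but blows up to the left; finite-box eigenvectors near $E$ need not be exponentially localized at all. Your sketch does not close this gap.

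The paper avoids the issue entirely by passing through the Sacker--Sell spectrum. Since $S_E^g$ is conjugate to $e^{-g}S_E$ by a constant diagonal matrix, $(T,S_E^g)$ has exponential dichotomy iff $-g\notin\Sigma_{SS}(S_E)$. Now $\Sigma_L(S_E)=\{\pm L(E)\}$ and, by Lemma~\ref{SSS}, $\partial\Sigma_{SS}(S_E)\subseteq\Sigma_L(S_E)\subseteq\Sigma_{SS}(S_E)$. If $L(E)=g$ then $-g=-L(E)\in\Sigma_{SS}(S_E)$ directly; if $E\in\Sigma(0)$ then $0\in\Sigma_{SS}(S_E)$ by Proposition~\ref{equi}, whence $\Sigma_{SS}(S_E)=[-L(E),L(E)]\ni -g$ whenever $g<L(E)$. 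In either case $(T,S_E^g)$ fails exponential dichotomy, and Sacker--Sell (Lemma~\ref{SSlemma}) yields a nontrivial \emph{bounded} solution $u$ of $\mathcal{H}(g,x_0)u=Eu$ for some $x_0\in X$. With a bounded solution the Weyl argument is immediate: the truncation residual is uniformly $O(1)$, so either $u\in\ell^2$ (eigenvalue) or $\|\chi_{[-i,i]}u\|_2\to\infty$ (genuine Weyl sequence). This unifies both cases in one stroke and replaces your localization construction with a short dynamical argument; as written, your proposal does not establish the $\Sigma(0)\cap\mathcal{E}_+$ inclusion.
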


\begin{proposition}\label{subset}
Let $(X,T)$ be strictly ergodic and $X$ be a compact and connected metric space. Let $v\in C^{0}(X,\mathbb{C})$ and $g> 0$. Then
\begin{equation*}
\Sigma(g) \subseteq \mathcal{E}_{0} \cup (\Sigma(0)\cap\mathcal{E}_{+}).
\end{equation*}
\end{proposition}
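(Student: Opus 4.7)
The plan is to fix $E\notin\mathcal{E}_{0}\cup(\Sigma(0)\cap\mathcal{E}_{+})$ and exhibit a bounded two-sided inverse for $\mathcal{H}(g,x)-E$ on $\ell^{2}(\mathbb{Z})$. Since $\mathbb{C}=\mathcal{E}_{-}\cup\mathcal{E}_{0}\cup\mathcal{E}_{+}$, such an $E$ falls into one of two complementary cases: $E\in\mathcal{E}_{+}\setminus\Sigma(0)$ or $E\in\mathcal{E}_{-}$. A preliminary observation I would record first is the elementary identity $e^{g}S_{E}^{g}(x)=P\,S_{E}(x)\,P^{-1}$ with $P=\mathrm{diag}(1,e^{g})$, which immediately yields $L_{g}(E)=L(E)-g$, the equivalence $(T,S_{E}^{g})\in\mathcal{UH}\Longleftrightarrow(T,S_{E})\in\mathcal{UH}$, and a canonical transport of continuous invariant sections between the two cocycles.

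In the first case $E\in\mathcal{E}_{+}\setminus\Sigma(0)$, Proposition~\ref{equi} gives $(T,S_{E})\in\mathcal{UH}$, hence also $(T,S_{E}^{g})\in\mathcal{UH}$, with Lyapunov exponents $L(E)-g>0$ on the unstable section and $-L(E)-g<0$ on the stable section. Using the two associated exponentially decaying/growing continuous directions, one builds the Green kernel $G(n,m;x)$ in the standard hyperbolic-dichotomy way and bounds $\|G\|_{\ell^{2}\to\ell^{2}}<\infty$ by direct exponential summation, so $E\notin\Sigma(g)$.

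In the second case $E\in\mathcal{E}_{-}$, which is the more subtle one, $L(E)<g$ forces \emph{both} Lyapunov exponents of $(T,S_{E}^{g})$ to be strictly negative, so the dichotomy degenerates and the classical Green kernel construction is unavailable. The key intermediate step is a uniform exponential contraction $\|(S_{E}^{g})_{n}(y)\|\leqslant Ce^{-cn}$ for every $y\in X$ and $n\geqslant 0$. Granted this, I would define $R:\ell^{2}(\mathbb{Z})\to\ell^{2}(\mathbb{Z})$ by the explicit forward Duhamel formula
\[
(Rf)_{n}=-e^{-g}\sum_{m<n}\bigl[S_{E}^{g}(T^{n-1}x)\cdots S_{E}^{g}(T^{m+1}x)\bigr]_{11}\,f_{m},
\]
verify by telescoping that $(\mathcal{H}(g,x)-E)R=\mathrm{Id}$, and bound $\|R\|_{\ell^{2}\to\ell^{2}}$ via Young's convolution inequality applied to the kernel $Ce^{-c(n-m)}\mathbf{1}_{m<n}$. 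Injectivity is then immediate: any homogeneous solution corresponds to an orbit of $(T,S_{E}^{g})$, which decays at $+\infty$ by the contraction, but the inverse cocycle has both Lyapunov exponents $g\pm L(E)$ strictly positive, so the solution must grow exponentially at $-\infty$ unless identically zero.

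The main obstacle is the uniform exponential contraction in Case~2. Kingman's subadditive ergodic theorem supplies it only for $\mu$-a.e.\ $x$, whereas the Duhamel formula requires the bound at every $y\in X$ (since $y$ ranges over all the shifted orbit points $T^{m+1}x$). I plan to bridge this via the unique ergodicity of $(X,T)$ together with the standard uniform upper semicontinuity for continuous subadditive cocycles over uniquely ergodic systems, which yields
\[
\limsup_{n\to\infty}\frac{1}{n}\log\|(S_{E}^{g})_{n}(y)\|\leqslant L_{g}(E)=L(E)-g<0 \quad\text{uniformly in } y\in X,
\]
delivering the required uniform exponential decay for all $y\in X$ and hence the bounded right inverse.
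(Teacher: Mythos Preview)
Your proposal is correct and follows essentially the same route as the paper: the same two-case split $E\in\mathcal{E}_{+}\setminus\Sigma(0)$ versus $E\in\mathcal{E}_{-}$, the same Green-function construction from the hyperbolic dichotomy in the first case, and in the second case the same one-sided forward Green kernel bounded via Young's inequality after invoking the uniform upper bound $\limsup_{n}\tfrac{1}{n}\log\|A_{n}(y)\|\leqslant L(E)$ over a uniquely ergodic base (the paper cites this as Furman's theorem). The only notable difference is how you close the two-sided invertibility in Case~$\mathcal{E}_{-}$: the paper verifies directly by an explicit matrix computation that the forward kernel is also a \emph{left} inverse (its Lemma~3.2), whereas you argue injectivity by noting that the uniform forward contraction forces any nonzero homogeneous solution to blow up at $-\infty$ and hence leave $\ell^{2}$ --- a cleaner dynamical argument that avoids the algebra but yields the same conclusion.
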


\subsection{Proof of Proposition \ref{supset}}
\begin{proof}

Let us recall the concept of exponential dichotomy.
We say that $(T, A)$ has exponential dichotomy if there are positive constants $C>0, \gamma>0$ and a continuous, projection-valued function $x \mapsto P_x=P_x^2: X \rightarrow \mathbb{PC}^{2}$ such that
\begin{equation*}
\begin{array}{cc}
\|A_n(x) P_x A_m(x)^{-1}\| \leqslant C \mathrm{e}^{-\gamma(n-m)}, & n \geqslant m, \\
\|A_n(x)(\mathrm{Id}-P_x) A_m(x)^{-1}\| \leqslant C \mathrm{e}^{\gamma(n-m)}, & n \leqslant m,
\end{array}
\end{equation*}
for all $x \in X$ and $m, n \in \mathbb{Z}$. For $\mathrm{SL}(2,\mathbb{C})$-valued cocycles, exponential dichotomy is also called uniform hyperbolicity, see \cite{John82}.

Recall that the Sacker-Sell spectrum of the cocycle $(T,A)$ is defined as
\begin{equation*}
\Sigma_{SS}(A)=\{\lambda\in \mathbb{R}: (T, \mathrm{e}^{\lambda}A) \  \text{has no exponential dichotomy}\}.
\end{equation*}
In particular, the Sacker-Sell spectrum of any $\mathrm{GL}(2,\mathbb{C})$-valued cocycle is the union of at most two intervals. The following lemma relates the Lyapunov exponent and the Sacker-Sell spectrum.
\begin{lemma}[\label{SSS}\cite{JPS87}, Theorem 2.3]
Let $(X,\mu,T)$ be ergodic with $X$ compact and connected and $A\in C^{0}(X,\mathrm{SL}(2,\mathbb{C}))$. Then
\begin{equation*}
\partial \Sigma_{SS} \subseteq \Sigma_{L} \subseteq \Sigma_{SS},
\end{equation*}
where $\Sigma_{L}(A)=\{\pm L(T,A)\}$ is called the Lyapunov spectrum of $(T,A)$.
\end{lemma}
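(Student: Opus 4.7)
The plan is to exploit the elementary identity $\|(e^{\lambda}A)_n(x)\|=e^{n\lambda}\|A_n(x)\|$, which shows that scaling the fiber action by $e^{\lambda}$ shifts both Lyapunov exponents by $\lambda$. Since $A\in C^{0}(X,\mathrm{SL}(2,\mathbb{C}))$ and $(X,\mu,T)$ is ergodic, the cocycle $(T,A)$ has Lyapunov exponents $\pm L:=\pm L(T,A)$, and consequently $(T,e^{\lambda}A)$ has Lyapunov exponents $\lambda+L$ and $\lambda-L$. Once this reduction is in place, both inclusions become statements relating the sign/magnitude of Lyapunov exponents to the existence of exponential dichotomy.

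For the inclusion $\Sigma_{L}\subseteq\Sigma_{SS}$, I would suppose $\lambda=\pm L$ and argue by contradiction. If $(T,e^{\lambda}A)$ had an exponential dichotomy with projection $P_x$ and rate $\gamma>0$, then setting $m=0$ in the defining inequalities yields
\[
\|(e^{\lambda}A)_n(x)v\|\leqslant C e^{-\gamma n}\|v\|,\qquad v\in\operatorname{range}P_x,\ n\geqslant 0,
\]
and an analogous lower bound $\|(e^{\lambda}A)_n(x)w\|\geqslant C^{-1}e^{\gamma n}\|w\|$ for $w$ in the unstable subspace (obtained from the second inequality with $m=0$ and inverting). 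Since the fibers are two-dimensional and split as stable$\,\oplus\,$unstable, every Lyapunov exponent of $(T,e^{\lambda}A)$ must satisfy $|\cdot|\geqslant\gamma$, contradicting the presence of the zero exponent forced by $\lambda=\pm L$. Hence $\pm L\in\Sigma_{SS}$.

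For the inclusion $\partial\Sigma_{SS}\subseteq\Sigma_{L}$, I would first note that $\mathbb{R}\setminus\Sigma_{SS}$ is open, since exponential dichotomy is stable under $C^{0}$-small perturbations and $\lambda\mapsto e^{\lambda}A$ is $C^{0}$-continuous. Given $\lambda_{0}\in\partial\Sigma_{SS}$, pick $\lambda_{n}\to\lambda_{0}$ with $\lambda_{n}\notin\Sigma_{SS}$, and let $\gamma_{n}>0$ be the best dichotomy rate for $(T,e^{\lambda_{n}}A)$; by the previous paragraph, $\min(|\lambda_{n}+L|,|\lambda_{n}-L|)\geqslant\gamma_{n}$. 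The crux is to show $\gamma_{n}\to 0$. If not, pass to a subsequence with $\gamma_{n}\geqslant\gamma_{*}>0$; using compactness of $X$, the uniform bounds on the continuous projections $P_x^{(n)}$ together with the Arzel\`a--Ascoli theorem give a limit projection $P_x$ satisfying the dichotomy inequalities for $(T,e^{\lambda_{0}}A)$, contradicting $\lambda_{0}\in\Sigma_{SS}$. Therefore $\gamma_{n}\to 0$ and, taking limits, $\lambda_{0}\in\{L,-L\}=\Sigma_{L}$.

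The main obstacle is the compactness/continuity step in the last paragraph: verifying that a sequence of exponential dichotomies with rates bounded away from $0$ produces, in the limit, an exponential dichotomy for the limit cocycle. This is where the hypotheses that $X$ is a \emph{compact} and \emph{connected} metric space enter: compactness provides equicontinuity of the family $\{P_x^{(n)}\}$ and uniform bounds on the dichotomy constants $C_n$, while connectedness ensures that the rank of the limit projection is constant on $X$ (so that a genuine two-term splitting survives in the limit rather than degenerating). All other ingredients — the Lyapunov exponent shift, the two-dimensional splitting, and the simple dichotomy-implies-exponent-bounded-away-from-zero argument — are routine once the cocycle formalism of Section 2 is set up.
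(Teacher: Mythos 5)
The paper offers no proof of this lemma: it is quoted directly from Johnson--Palmer--Sell \cite{JPS87}, so your attempt must stand on its own. Your argument for the inclusion $\Sigma_{L}\subseteq\Sigma_{SS}$ is correct: an exponential dichotomy of rate $\gamma$ for $(T,\mathrm{e}^{\lambda}A)$ forces every Lyapunov exponent of that cocycle out of $(-\gamma,\gamma)$, while the exponents of $(T,\mathrm{e}^{\lambda}A)$ are $\lambda\pm L$, one of which vanishes when $\lambda\in\{\pm L\}$.

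The proof of $\partial\Sigma_{SS}\subseteq\Sigma_{L}$, however, has a genuine gap at its final step, and the step you flag as the crux is not the crux. First, $\gamma_{n}\to 0$ is immediate and needs no Arzel\`a--Ascoli argument: since $(\mathrm{e}^{\lambda'}A)_{k}=\mathrm{e}^{(\lambda'-\lambda_{n})k}(\mathrm{e}^{\lambda_{n}}A)_{k}$, the same projections give a dichotomy at $\lambda'$ whenever $|\lambda'-\lambda_{n}|<\gamma_{n}$, so $\gamma_{n}\leqslant\dist(\lambda_{n},\Sigma_{SS})\leqslant|\lambda_{n}-\lambda_{0}|\to 0$. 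But then ``taking limits'' in $\min(|\lambda_{n}+L|,|\lambda_{n}-L|)\geqslant\gamma_{n}$ with $\gamma_{n}\to 0$ yields only $\dist(\lambda_{0},\{\pm L\})\geqslant 0$, which is vacuous: the inequality points the wrong way. Concretely, everything you establish is consistent with a configuration such as $\Sigma_{SS}=[-L,L/2]\cup\{L\}$ and $\lambda_{0}=L/2\in\partial\Sigma_{SS}\setminus\Sigma_{L}$, so your argument cannot rule out boundary points away from $\pm L$. What is actually required is the converse implication: that for $\lambda$ with both exponents $\lambda\pm L$ nonzero (and of the signs dictated by the component of the complement one sits in), the almost-everywhere hyperbolicity can be upgraded to a uniform exponential dichotomy. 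That is the substantive content of the Johnson--Palmer--Sell theorem; it is proved through the structure of invariant measures on the projective bundle and exponential separation, and it is there --- in pinning down the rank of the dichotomy projection on each gap --- that the connectedness of $X$ is genuinely used, not in the compactness step you describe.
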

Fix $E\in\mathcal{E}_{0}\cup(\Sigma(0)\cap\mathcal{E}_{+})$. Obviously, the Lyapunov spectrum of $(T,S_{E})$ is $\Sigma_{L}(S_{E})=\{\pm L(E)\}$ since $\det S_{E}=1$. Thus by Lemma \ref{SSS}, if $E\in\mathcal{E}_{0}$, then
\begin{equation*}
-g=-L(E)\in\Sigma_{SS}(S_{E}).
\end{equation*}
If $E\in \Sigma(0)\cap\mathcal{E}_{+}$, then $0\in \Sigma_{SS}(S_{E})$ and we deduce that 
\begin{equation*}
-g\in [-L(E),L(E)]=\Sigma_{SS}(S_{E}).
\end{equation*}
It is obvious that $-g\in \Sigma_{SS}(S_{E})$ in either case, and thus $(T, \mathrm{e}^{-g}S_{E})$ has no exponential dichotomy. Let $D=\begin{pmatrix}
\mathrm{e}^{-g/2}&0\\
0&\mathrm{e}^{g/2}
\end{pmatrix}$, then the direct calculation shows
\begin{equation*}
D^{-1} S_{E}^{g}(x)D=\mathrm{e}^{-g} S_{E}(x).
\end{equation*}
Since the exponential dichotomy of the cocycle is invariant by conjugation, which means the Hatano-Nelson cocycle $(T,S_{E}^{g})$ has no exponential dichotomy as well. 

To finish the proof, we will use the Weyl sequence argument. Before that, let us recall the following result of Sacker and Sell \cite{SS76}.
\begin{lemma}[\label{SSlemma}\cite{SS76}, Theorem 2]
If there is no non-trivial bounded solution $u$ satisfying $\mathcal{H}(g,x)u=Eu$ for some $x$, then $(T, S_{E}^{g})$ has exponential dichotomy.
\end{lemma}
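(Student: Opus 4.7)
The plan is to prove the contrapositive: assuming $(T, S_E^g)$ does not have exponential dichotomy, I construct a nontrivial bounded solution of $\mathcal{H}(g,x)u=Eu$ for \emph{every} $x\in X$; this refutes the hypothesis that there is some $x$ at which no bounded solution exists. Via the identification $v_k=\binom{u_k}{u_{k-1}}$, bounded solutions of $\mathcal{H}(g,x)u=Eu$ correspond exactly to vectors $v\in\mathbb{C}^2\setminus\{0\}$ with $\sup_{k\in\mathbb{Z}}\|(S_E^g)_k(x) v\|<\infty$, so it is enough to exhibit such a vector for each $x\in X$.

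First I would produce a bounded orbit at a single base point. For each $N\geqslant 1$, the failure of exponential dichotomy supplies a pair $(y_N,w_N)\in X\times S^1$ with
\[\max_{|k|\leqslant N}\|(S_E^g)_k(y_N) w_N\|\leqslant C_0\]
for a constant $C_0$ independent of $N$: the opposite would yield uniform cone separation across all of $X$, which via a standard Yoccoz--Johnson-type argument upgrades to a continuous invariant hyperbolic splitting of $(T,S_E^g)$, contradicting the failure of ED. Extracting a subsequential limit of $\{(y_N,w_N)\}$ in the compact space $X\times S^1$ and using continuity of each iterate $(S_E^g)_k$ yields $(y^*,v^*)\in X\times(\mathbb{C}^2\setminus\{0\})$ with $\sup_{k\in\mathbb{Z}}\|(S_E^g)_k(y^*) v^*\|\leqslant C_0$.

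The second step is to propagate the bounded orbit from $y^*$ to the prescribed $x$ by minimality. I lift the dynamics to the projective bundle $\widehat X=X\times\mathbb{P}^1(\mathbb{C})$ under the induced homeomorphism $F(y,[v])=(Ty,[S_E^g(y) v])$. By Selgrade's theorem, $\widehat X$ decomposes into finitely many closed $F$-invariant Morse components; failure of ED forces the presence of a non-trivial ``central'' component $\mathcal{C}\subset\widehat X$ on which $F$ is chain-recurrent and every $(y,[v])\in\mathcal{C}$ lifts to a unit vector $\hat v$ whose two-sided cocycle orbit has norm uniformly bounded over $\mathcal{C}$. The projection $\pi(\mathcal{C})\subset X$ is closed and $T$-invariant; it is non-empty by Step 1, hence equal to $X$ by minimality of $T$. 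In particular $x\in\pi(\mathcal{C})$, which produces the required bounded non-zero orbit at $x$.

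The hardest part is this second step: one must guarantee that bounded \emph{projective} orbits lift to bounded \emph{vector} orbits (rather than merely sub-exponentially growing ones). A naive shift-and-limit argument applied to $(y^*,v^*)$ can collapse to a zero limit at $x$ if the direction $[(S_E^g)_n(y^*) v^*]$ lands on a subspace along which the vector norm decays. The Selgrade/Sacker--Sell machinery circumvents this by exploiting a form of equicontinuity inside the central Morse component $\mathcal{C}$, so that the uniform upper bound $\|S_E^g(\cdot)\|_\infty$ and uniform lower bound $\|(S_E^g)^{-1}(\cdot)\|_\infty^{-1}$ on the compact base $X$ together control normalization drift along orbits; this is the substantive content of \cite[Theorem 2]{SS76}.
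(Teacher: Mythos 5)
The paper offers no proof of this statement at all: it is imported verbatim as Theorem 2 of Sacker--Sell \cite{SS76} and used as a black box, so there is nothing internal to compare your argument against. Judged on its own, your sketch has the right general shape (contrapositive, compactness, a diagonal limit of finite-window bounded orbits), but two steps are genuinely unproved. In Step 1, the existence of a single constant $C_0$ valid for all $N$ is the crux. The negation of your claim is: for every $C_0$ there is an $N$ such that every unit vector at every point escapes the ball of radius $C_0$ within time $|k|\leqslant N$ --- which is exactly the quantitative form of ``no nontrivial bounded full orbit,'' i.e.\ the hypothesis of the implication you are trying to establish. Dismissing this case by asserting that it ``upgrades to a continuous invariant hyperbolic splitting via a standard Yoccoz--Johnson-type argument'' defers the entire content of \cite[Theorem 2]{SS76} to a phrase; as written the reduction is circular.

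In Step 2, Selgrade's theorem gives invariant subbundles over chain-recurrent components whose growth is governed by spectral intervals; for the central component one gets two-sided \emph{subexponential} growth, not boundedness. You correctly flag that passing from bounded projective orbits to bounded vector orbits is the delicate point, but then assert without argument that the machinery ``circumvents'' it --- that is precisely the gap, and it matters: the Weyl-sequence estimate in the proof of Proposition \ref{supset} bounds the numerator $\|\phi^{(i+1)}\|_2^2-\|\phi^{(i-1)}\|_2^2$ by a constant using $\sup_n|u_n|<\infty$, so a merely subexponentially growing solution would not suffice. Finally, note that Step 2 is superfluous for the lemma as stated and as used: only the existence of a bounded solution at \emph{some} $x_0$ is required, which a repaired Step 1 would already deliver; minimality is not needed here.
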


According to Lemma \ref{SSlemma}, there exists $x_0\in X$ and a non-trivial bounded solution $u$ such that  $\mathcal{H}(g,x_0)u=Eu$. For any $i\geqslant1$, we let $\phi^{(i)}= \chi_{[-i,i]} u$.
Observe that
\begin{eqnarray*}
&& [(\mathcal{H}(g,x_{0})-E)\phi^{(i)}]_{i} = \mathrm{e}^{g}u_{i+1},  \qquad \quad
[(\mathcal{H}(g,x_{0})-E)\phi^{(i)}]_{i+1}=-\mathrm{e}^{-g}u_{i},\\
&& [(\mathcal{H}(g,x_{0})-E)\phi^{(i)}]_{-i} = \mathrm{e}^{-g}u_{-i-1}, \quad
[(\mathcal{H}(g,x_{0})-E)\phi^{(i)}]_{-i-1}=-\mathrm{e}^{g}u_{-i},
\end{eqnarray*}
which implies that
\begin{equation*}
\bigg\| (\mathcal{H}(g,x_{0})-E)\frac{\phi^{(i)}}{\|\phi^{(i)}\|_{2}}\bigg\|_{2}^{2} \leqslant \mathrm{e}^{g}\frac{\|\phi^{(i+1)}\|_{2}^{2}-\|\phi^{(i-1)}\|_{2}^{2}}{\|\phi^{(i)}\|_{2}^{2}} \leqslant \frac{C}{\|\phi^{(i)}\|_{2}^{2}}.
\end{equation*}
If $\lim_{i\rightarrow \infty}\|\phi^{(i)}\|_{2}= \infty$, then $\frac{\phi^{(i)}}{\|\phi^{(i)}\|_{2}}$ is the Weyl sequence and thus $E\in \Sigma_{x_{0}}(g)=\Sigma(g)$. If $\lim_{i\rightarrow \infty}\|\phi^{(i)}\|_{2}< \infty$, then $E$ is an eigenvalue, we also have $E\in \Sigma_{x_{0}}(g)=\Sigma(g)$.
\end{proof}

\subsection{Proof of Proposition \ref{subset}}
\begin{proof}
It suffices to prove that if 
$E\in\mathcal{E}_{-}\cup(\mathcal{E}_{+}\backslash \Sigma(0)),$ then $E\notin \Sigma(g)$. We distinguish the proof into two cases:\\

{\bf Case 1:} $E\in \mathcal{E}_{-}$.
Fix $x\in X$ and $n\in\mathbb{Z}$. We denote by $\phi^{n}_{(\cdot)}:\mathbb{Z}\rightarrow \mathbb{C}$ the solution of 
\begin{equation*}
[\mathcal{H}(g,x)\phi^{n}]_{(\cdot)}=E\phi^{n}_{(\cdot)}\ \text{with} \ \phi^{n}_{n}=0 \ \text{and}\ \phi^{n}_{n+1}=\mathrm{e}^{-g}.
\end{equation*}

Define $u^{n}_{m}=\mathrm{e}^{(m-n)g}\phi^{n}_{m}$ for any $m\in\mathbb{Z}$. Then by Lemma \ref{subs}, one can check that
\begin{equation*}
-u^{n}_{m+1}-u^{n}_{m-1}+v(T^{m}x)u^{n}_{m}=Eu^{n}_{m},\ \text{for any}\ m\in \mathbb{Z},
\end{equation*}
which means $u^{n}_{(\cdot)}$ is the solution of $[H(x)u^{n}]_{(\cdot)}=Eu^{n}_{(\cdot)}$ with $u^{n}_{n}=0$ and $u^{n}_{n+1}=1$. Hence
\begin{equation}\label{mul}
\begin{pmatrix}
\mathrm{e}^{(m-n+1)g}\phi^{n}_{m+1}\\
\mathrm{e}^{(m-n)g}\phi^{n}_{m}
\end{pmatrix}=
\begin{pmatrix}
u_{m+1}^{n}\\
u_{m}^{n}
\end{pmatrix}=
A_{m-n}(T^{n+1}x)
\begin{pmatrix}
1\\0
\end{pmatrix}.
\end{equation}

Now we define an infinite dimensional matrix $\mathcal{G}_{E,x}(\cdot,\cdot):\mathbb{Z}\times\mathbb{Z}\rightarrow\mathbb{C}$ as
\begin{equation}\label{Gx}
\mathcal{G}_{E,x}(m,n)=
\begin{cases}
0, &m\leqslant n,\\
-\phi^{n}_{m},&m\geqslant n+1.
\end{cases}
\end{equation}
The following lemma shows the boundedness of $\mathcal{G}_{E,x}$.
\begin{lemma}\label{bound1}
Let $g>0$, $E\in\mathcal{E}_{-}$ and $x\in X$. Then 
\begin{equation*}
\mathcal{G}_{E,x}: \ell^{2}(\mathbb{Z}) \rightarrow  \ell^{2}(\mathbb{Z}) \ \text{is bounded.}
\end{equation*}
\end{lemma}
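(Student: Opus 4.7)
My plan is to establish pointwise exponential decay of the off-diagonal entries $|\mathcal{G}_{E,x}(m,n)|$ in $|m-n|$, and then invoke Schur's test to upgrade this to $\ell^{2}$-boundedness. The starting point is the identity \eqref{mul}: reading off the second coordinate and using $u^{n}_{m} = e^{(m-n)g}\phi^{n}_{m}$, I see that for $m > n$,
\[
|\mathcal{G}_{E,x}(m,n)| = |\phi^{n}_{m}| = e^{-(m-n)g}\,|u^{n}_{m}| \leq e^{-(m-n)g}\,\|A_{m-n}(T^{n+1}x)\|.
\]
Thus the task reduces to a uniform-in-$x$ upper bound on $\|A_{k}(y)\|$ at exponential rate arbitrarily close to $L(E)$.

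The hypothesis $E\in\mathcal{E}_{-}$ reads $L(E) < g$. I fix $\epsilon > 0$ with $L(E)+\epsilon < g$, and appeal to the uniform upper semicontinuity of the Lyapunov exponent for strictly ergodic systems (Furman's subadditive ergodic theorem for uniquely ergodic continuous cocycles): there exists $N = N(\epsilon)$ such that
\[
\|A_{k}(y)\| \leq e^{k(L(E)+\epsilon)} \quad\text{for every } k\geq N \text{ and every } y\in X.
\]
For $1\leq k < N$, the continuity of $v$ together with the compactness of $X$ supplies an $x$-free bound, which can be absorbed into a multiplicative constant. Combining these yields
\[
|\mathcal{G}_{E,x}(m,n)| \leq C\,e^{-\delta(m-n)}, \quad m > n,\ x\in X,
\]
with $\delta := g - L(E) - \epsilon > 0$, where $C$ does not depend on $x$.

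Once this decay is in place, Schur's test closes the argument: both the row sums $\sup_{m}\sum_{n}|\mathcal{G}_{E,x}(m,n)|$ and the column sums $\sup_{n}\sum_{m}|\mathcal{G}_{E,x}(m,n)|$ are controlled by $C\sum_{k\geq 1}e^{-\delta k} < \infty$, uniformly in $x$. Therefore $\mathcal{G}_{E,x}$ extends to a bounded operator on $\ell^{2}(\mathbb{Z})$, with operator-norm bound independent of $x\in X$.

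The crux of the argument is the uniform bound $\|A_{k}(y)\| \leq e^{k(L(E)+\epsilon)}$ valid for \emph{every} $y\in X$, not only for $\mu$-a.e.\ $y$. Kingman's subadditive ergodic theorem alone gives only an almost-sure statement in the limit, which is too weak: the boundedness of $\mathcal{G}_{E,x}$ is needed for each fixed $x$ in order to feed into the next stage of the proof of Proposition \ref{subset}. Strict ergodicity of $(X,T)$, coupled with continuity of the Schr\"odinger cocycle, is precisely what lifts the almost-sure statement to the pointwise uniform one; the rest is a routine Schur estimate.
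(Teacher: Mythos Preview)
Your proof is correct and follows essentially the same path as the paper's: both derive the pointwise bound $|\mathcal{G}_{E,x}(m,n)|\leq e^{-(m-n)g}\|A_{m-n}(T^{n+1}x)\|$ from \eqref{mul}, invoke Furman's theorem to obtain the uniform-in-$x$ estimate $\|A_{k}(y)\|\leq e^{k(L(E)+\varepsilon)}$ for large $k$, handle small $k$ by compactness, and then pass from exponential kernel decay to $\ell^{2}$-boundedness. The only cosmetic difference is that the paper cites Young's inequality (viewing the dominating kernel as a convolution) where you cite Schur's test; for a kernel bounded by a function of $m-n$ these are interchangeable.
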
	
\begin{proof}
Let us show the exponential decay of the entries of $\mathcal{G}_{E,x}$. 
Recall that the Furman's Theorem \cite {Fur97} shows that for the Schr\"odinger cocycle $(T, S_{E})$,
\begin{equation*}
\lim_{N\rightarrow \pm\infty} \sup_{x\in X} \frac{1}{|N|} \log \|A_{N}(x)\| = L(E).
\end{equation*}
By the assumption that $E\in\mathcal{E}_{-}$, then for any $0<\varepsilon<g-L(E)$, there exists $\overline{N}_{1}=\overline{N}_{1}(E,v,g,\varepsilon)$ such that for any $m-n\geqslant \overline{N}_{1}$,
\begin{equation}\label{upep}
\sup_{x\in X} \frac{1}{m-n} \log \|A_{m-n}(x)\| \leqslant L(E)+\varepsilon.
\end{equation}
Therefore, combining \eqref{mul} with \eqref{upep}, for any $m-n>\overline{N}_{1}$, we have  
\begin{equation}\label{decay}
\begin{split}
\sup_{x\in X}|\mathcal{G}_{E,x}(m,n)|=\sup_{x\in X}|\phi^n_{m}|&\leqslant \mathrm{e}^{-(m-n)g}\sup_{x\in X}\|A_{m-n}(x)\| \leqslant \mathrm{e}^{-(g-L(E)-\varepsilon)(m-n)}.
\end{split}
\end{equation}
On the other hand, for any $1\leqslant m-n\leqslant \overline{N}_{1}$, we have
\begin{equation*}
\sup_{x\in X}|\mathcal{G}_{E,x}(m,n)|\leqslant \mathrm{e}^{-(m-n)g+C_{1}(m-n)}<\mathrm{e}^{C_{1} \overline{N}_{1}},
\end{equation*}
where $C_{1}:=\log (|E|+\|v\|_{C^{0}}+\mathrm{e}^{g}+\mathrm{e}^{-g})$. Therefore $\mathcal{G}_{E,x}$ defines an operator on $\ell^2(\mathbb{Z})$ in a natural way. According to Young's inequality, there exists $C_{2}=C_{2}(E,v,g)$ such that for any $\psi\in \ell^{2}(\mathbb{Z})$,
\begin{equation*}
\|\mathcal{G}_{E,x}\psi\|_{2}^{2}  =\sum_{m\in \mathbb{Z}}\Big|\sum_{n\in \mathbb{Z}} \mathcal{G}_{E,x}(m,n) \psi_{n} \Big|^{2}=\sum_{m\in \mathbb{Z}}\Big|\sum_{n\leqslant m-1} \mathcal{G}_{E,x}(m,n) \psi_{n} \Big|^{2}<C_{2}\|\psi\|_{2}^{2},
\end{equation*}
where the last inequality comes from  \eqref{decay}.
\end{proof}

Now, it follows from \eqref{Gx} that 
\begin{equation*}
[(\mathcal{H}(g,x)-E)\mathcal{G}_{E,x}](m,n) =\delta_{m}(n),
\end{equation*}
which means that $\mathcal{G}_{E,x}$ is the right inverse of $\mathcal{H}(g,x)-E$.  Thus, in order to prove $E\notin \Sigma(g)$, using Lemma \ref{bound1}, we only need to prove that $\mathcal{G}_{E,x}$ is also the left inverse of $\mathcal{H}(g,x)-E$.
\begin{lemma}\label{leftinv}
Let $g>0$, $E\in\mathcal{E}_{-}$ and $x\in X$. Then
\begin{equation*}
[\mathcal{G}_{E,x}(\mathcal{H}(g,x)-E)](m,n) =\delta_{n}(m).
\end{equation*}
\end{lemma}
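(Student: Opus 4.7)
The plan is to verify the entrywise identity $[\mathcal{G}_{E,x}(\mathcal{H}(g,x)-E)](m,n)=\delta_{n}(m)$ by direct computation. Using the tridiagonal form of $\mathcal{H}(g,x)-E$,
\[
[\mathcal{G}_{E,x}(\mathcal{H}(g,x)-E)](m,n) = -\mathrm{e}^{g}\mathcal{G}_{E,x}(m,n-1) - \mathrm{e}^{-g}\mathcal{G}_{E,x}(m,n+1) + (v(T^{n}x)-E)\mathcal{G}_{E,x}(m,n),
\]
so everything reduces to a case analysis on the relative positions of $m$ and $n$, in combination with the defining formula for $\mathcal{G}_{E,x}$.

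I would split into four subcases. If $n\geqslant m+1$ all three summands vanish. If $n=m$ only the first term contributes, and the boundary condition $\phi^{m-1}_{m}=\mathrm{e}^{-g}$ gives $1$. If $n=m-1$ the two contributing terms cancel, after using $\phi^{m-1}_{m}=\mathrm{e}^{-g}$ together with the value of $\phi^{m-2}_{m}$ extracted from the eigenfunction equation $[\mathcal{H}(g,x)\phi^{m-2}]_{m-1}=E\phi^{m-2}_{m-1}$ and the initialization $\phi^{m-2}_{m-2}=0,\ \phi^{m-2}_{m-1}=\mathrm{e}^{-g}$. The remaining case $n\leqslant m-2$ reduces to establishing the key identity
\[
\mathrm{e}^{g}\phi^{n-1}_{m} + \mathrm{e}^{-g}\phi^{n+1}_{m} = (v(T^{n}x)-E)\phi^{n}_{m}.
\]

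The genuine content of the lemma is this last identity, which expresses a Schr\"odinger-type recursion in the \emph{initialization} index $n$ (with $m$ held fixed), dual to the familiar recursion in the spatial index. By Lemma \ref{subs} the substitution $\phi^{n}_{m}=\mathrm{e}^{(n-m)g}u^{n}_{m}$ turns it into $u^{n-1}_{m}+u^{n+1}_{m}=(v(T^{n}x)-E)u^{n}_{m}$. To prove this, I would introduce the partner solution $p^{n}$ of $H(x)\psi=E\psi$ with $p^{n}_{n}=1,\ p^{n}_{n+1}=0$, and work with the Schr\"odinger transfer matrices $T_{k}=\begin{pmatrix} v(T^{k}x)-E & -1 \\ 1 & 0 \end{pmatrix}$. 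The product $T_{m}\cdots T_{n+1}$ equals the $2\times 2$ matrix with columns $(u^{n}_{m+1},u^{n}_{m})^{\mathsf{T}}$ and $(p^{n}_{m+1},p^{n}_{m})^{\mathsf{T}}$; multiplying this product on the right by $T_{n}$ yields $u^{n-1}_{m}=(v(T^{n}x)-E)u^{n}_{m}+p^{n}_{m}$, while multiplying by $T_{n+1}^{-1}$ yields $u^{n+1}_{m}=-p^{n}_{m}$. Adding these gives the identity.

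The main obstacle is precisely the key identity: the statement that $u^{n}_{m}$, as a function of its initialization index $n$, satisfies a Schr\"odinger-type recursion is not evident from the definition (which only gives the recursion in $m$), and the transfer-matrix argument above, though short, demands careful bookkeeping of matrix orientations and of the role of the partner solution $p^{n}$.
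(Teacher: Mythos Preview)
Your proof is correct and follows essentially the same strategy as the paper: an entrywise case analysis on the position of $n$ relative to $m$, with the main content being the recursion $\mathrm{e}^{g}\phi^{n-1}_{m}+\mathrm{e}^{-g}\phi^{n+1}_{m}=(v(T^{n}x)-E)\phi^{n}_{m}$ in the initialization index. The execution of the key step differs slightly: the paper works directly with the Hatano--Nelson cocycle $S_{E}^{g}$ and verifies by brute force that a certain matrix combination $\Phi_{n}(x)=-\mathrm{e}^{-g}I_{2}+[v(T^{n}x)-E]S_{E}^{g}(T^{n+1}x)-\mathrm{e}^{g}S_{E}^{g}(T^{n+1}x)S_{E}^{g}(T^{n}x)$ annihilates $(\mathrm{e}^{-g},0)^{\mathsf T}$, whereas you first pass to the Schr\"odinger cocycle via Lemma~\ref{subs} and then exploit the column identification of transfer-matrix products together with the partner solution $p^{n}$. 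Your route is a bit more structural and makes the cancellation transparent (the partner solution appears with opposite signs in $u^{n-1}_{m}$ and $u^{n+1}_{m}$), while the paper's route is a two-line matrix expansion; both are short. A minor difference in the case split: the paper treats $m=n+2$ as a boundary case handled by direct calculation and reserves the transfer-matrix argument for $m\geqslant n+3$, while your transfer-matrix argument covers all $n\leqslant m-2$ uniformly, which is fine since the products $T_{m}\cdots T_{n+2}$ and $T_{m}\cdots T_{n+1}$ remain well-defined there.
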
	
\begin{proof}
Obviously, 
\begin{equation*}
[\mathcal{G}_{E,x}(\mathcal{H}(g,x)-E)](m,n)=\delta_{mn}, \text{ for any } n\geqslant m.
\end{equation*}
When $m=n+1$ or $m=n+2$, by simple calculation, we have
\begin{equation*}
[\mathcal{G}_{E,x}(\mathcal{H}(g,x)-E)](m,n)=0.
\end{equation*}
When $m\geqslant n+3$, we recall that  $S_{E}^{g}(x)=\begin{pmatrix}
\mathrm{e}^{-g}[v(x)-E]&-\mathrm{e}^{-2g}\\
1&0
\end{pmatrix}$, then
\begin{equation*}
\begin{pmatrix}
\phi_{m}^{n-1}\\ \phi_{m-1}^{n-1}
\end{pmatrix} = \prod_{j=m-1}^{n} S_{E}^{g}(T^{j}x)\begin{pmatrix}
\phi_{n}^{n-1}\\ \phi_{n-1}^{n-1}
\end{pmatrix}  = \prod_{j=m-1}^{n} S_{E}^{g}(T^{j}x)\begin{pmatrix}
{\rm e}^{-g}\\0
\end{pmatrix} =: (I).
\end{equation*}
Similarly, we also get that
\begin{equation*}
\begin{split}
&\begin{pmatrix}
\phi_{m}^{n}\\ \phi_{m-1}^{n}
\end{pmatrix} = \prod_{j=m-1}^{n+1} S_{E}^{g}(T^{j}x)
\begin{pmatrix}
{\rm e}^{-g}\\0
\end{pmatrix}=:(II),\\
&\begin{pmatrix}
\phi_{m}^{n+1}\\ \phi_{m-1}^{n+1}
\end{pmatrix} = \prod_{j=m-1}^{n+2}S_{E}^{g}(T^{j}x)
\begin{pmatrix}
{\rm e}^{-g}\\0
\end{pmatrix}=:(III).
\end{split}
\end{equation*}
By  direct calculation, we have
\begin{equation}\label{expand}
-{\rm e}^{-g} (III) + [v(T^{n}x)-E] (II) - {\rm e}^{g} (I)=\bigg(\prod_{j=m-1}^{n+2}S_{E}^{g}(T^{j}x)\bigg) \Phi_{n}(x)
\begin{pmatrix}
{\rm e}^{-g}\\0
\end{pmatrix},
\end{equation}
where $\Phi_{n}(x)= -\mathrm{e}^{-g} I_{2} + [v(T^{n}x)-E]S_{E}^{g}(T^{n+1}x)- \mathrm{e}^{g}  S_{E}^{g}(T^{n+1}x) S_{E}^{g}(T^{n}x)$.  
Expanding $\Phi_{n}(x)$ shows that $\Phi_{n}(x)\begin{pmatrix}
{\rm e}^{-g}\\0
\end{pmatrix}=0$. 
Thus it follows from \eqref{expand} that
\begin{equation*}
-{\rm e}^{-g} \begin{pmatrix}
\phi_{m}^{n+1}\\ \phi_{m-1}^{n+1}
\end{pmatrix} +[v(T^{n}x)-E]\begin{pmatrix}
\phi_{m}^{n}\\ \phi_{m-1}^{n}
\end{pmatrix}-{\rm e}^{g} 
\begin{pmatrix}
\phi_{m}^{n-1}\\ \phi_{m-1}^{n-1}
\end{pmatrix}=0,
\end{equation*}
and the first row of the above equation gives 
\begin{equation*}
-{\rm e}^{-g}\phi_{m}^{n} +v(T^{n}x)\phi_{m}^{n} -{\rm e}^{g} \phi_{m}^{n-1}=E\phi_{m}^{n},
\end{equation*}
which shows the result.
\end{proof}

{\bf Case 2:} $E\in \mathcal{E}_{+}\backslash \Sigma(0)$. 
The following lemma shows $\mathcal{H}(g,x)-E$ has a bounded inverse and thus $E\notin \Sigma(g)$. 
\begin{lemma}\label{Gexists}
Let $g>0$, $E\in \mathcal{E}_{+}\backslash \Sigma(0)$ and $x\in X$. There exists a bounded linear operator $\mathcal{G}_{E,x}: \ell^{2}(\mathbb{Z})\rightarrow\ell^{2}(\mathbb{Z})$ such that 
\begin{equation}\label{iden}
[\mathcal{G}_{E,x}(\mathcal{H}(g,x)-E)](m,n) = [(\mathcal{H}(g,x)-E)\mathcal{G}_{E,x}](m,n) =\delta_{mn}.
\end{equation}
\end{lemma}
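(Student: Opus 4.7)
Since $E\in\mathcal{E}_{+}\setminus\Sigma(0)$, Proposition \ref{equi} tells us that the Schr\"odinger cocycle $(T,S_{E})$ is uniformly hyperbolic, and its (top) Lyapunov exponent is $L(E)>g$. I would first build the Green's function for the \emph{self-adjoint} operator $H(x)-E$ on $\ell^{2}(\mathbb{Z})$, and then twist it by a diagonal similarity to obtain $\mathcal{G}_{E,x}$. The hypothesis $L(E)>g$ is what makes the twist bounded.

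\textbf{Step 1: Schr\"odinger Green's function.} By uniform hyperbolicity there exist solutions $\psi^{s}, \psi^{u}$ of $H(x)\psi=E\psi$ that decay exponentially at rate $L(E)$ as $n\to+\infty$ and $n\to-\infty$ respectively, with uniform constants. Since $\det S_{E}=1$, the Wronskian $W=\psi^{s}_{n+1}\psi^{u}_{n}-\psi^{s}_{n}\psi^{u}_{n+1}$ is $n$-independent, and $W\ne 0$ because $E\notin\Sigma(0)$. Setting
\[ G_{E,x}(m,n)=\tfrac{1}{W}\psi^{s}_{m}\psi^{u}_{n}\text{ for }m\geqslant n, \qquad G_{E,x}(m,n)=\tfrac{1}{W}\psi^{u}_{m}\psi^{s}_{n}\text{ for }m\leqslant n, \]
standard UH estimates (e.g.\ the Furman-type bound already used in Lemma \ref{bound1}, combined with the transverse stable/unstable splitting) yield $|G_{E,x}(m,n)|\leqslant C\mathrm{e}^{-(L(E)-\varepsilon)|m-n|}$ for any small $\varepsilon>0$, and $G_{E,x}$ is the two-sided inverse of $H(x)-E$ on $\ell^{2}(\mathbb{Z})$.

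\textbf{Step 2: The twist.} The key observation, motivated by Lemma \ref{subs}, is that formally $\mathcal{H}(g,x)=D^{-1}H(x)D$ with $D=\operatorname{diag}(\mathrm{e}^{ng})$; indeed a direct check gives $\mathcal{H}(g,x)_{m,l}=\mathrm{e}^{(l-m)g}H(x)_{m,l}$. This suggests defining
\[ \mathcal{G}_{E,x}(m,n)=\mathrm{e}^{(n-m)g}G_{E,x}(m,n). \]
Although $D$ itself is unbounded, the twisted kernel is controlled: choosing $\varepsilon<L(E)-g$ in Step 1, for $m>n$ we get $|\mathcal{G}_{E,x}(m,n)|\leqslant C\mathrm{e}^{-(L(E)+g-\varepsilon)(m-n)}$, and for $m<n$ we get $|\mathcal{G}_{E,x}(m,n)|\leqslant C\mathrm{e}^{-(L(E)-g-\varepsilon)(n-m)}$. \emph{This is exactly where $L(E)>g$ is used}, and it is the heart of the argument. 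A Schur test (or Young's inequality, as in the proof of Lemma \ref{bound1}) then shows that $\mathcal{G}_{E,x}$ defines a bounded operator on $\ell^{2}(\mathbb{Z})$.

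\textbf{Step 3: Verification of \eqref{iden}.} Using $\mathcal{H}(g,x)_{m,l}=\mathrm{e}^{(l-m)g}H(x)_{m,l}$, direct matrix-element computation gives, for every $m,n\in\mathbb{Z}$,
\[ [(\mathcal{H}(g,x)-E)\mathcal{G}_{E,x}](m,n)=\mathrm{e}^{(n-m)g}\bigl[(H(x)-E)G_{E,x}(\cdot,n)\bigr]_{m}=\mathrm{e}^{(n-m)g}\delta_{mn}=\delta_{mn}, \]
where I have used the absolute convergence of the sum (guaranteed by the decay bounds) to interchange the factor $\mathrm{e}^{(l-m)g}$ with the entries of $H(x)$. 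The identity $\mathcal{G}_{E,x}(\mathcal{H}(g,x)-E)=I$ is obtained by the same computation from the right. The main obstacle in the plan is the uniform-in-$x$ exponential decay of $G_{E,x}$, but this follows from Proposition \ref{equi} and the standard dictionary between uniform hyperbolicity of $(T,S_{E})$ and exponential off-diagonal decay of the resolvent kernel of $H(x)-E$; once this is in hand, the rest of the argument is the algebraic twist above, which succeeds precisely because $E\in\mathcal{E}_{+}$.
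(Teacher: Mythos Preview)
Your proof is correct and follows essentially the same approach as the paper: build the Schr\"odinger Green's function $G_{E,x}$ from the stable/unstable solutions supplied by uniform hyperbolicity (Proposition \ref{equi}), define $\mathcal{G}_{E,x}(m,n)=\mathrm{e}^{(n-m)g}G_{E,x}(m,n)$, and use $L(E)>g$ together with the exponential off-diagonal decay of $G_{E,x}$ to obtain boundedness via Young's inequality. The paper phrases the construction via the Oseledets--Ruelle splitting and writes the normalizing factor as $\det(\vec{\varphi}^{u}(n),\vec{\varphi}^{s}(n))$ rather than the Wronskian, but these are the same objects, and the verification of \eqref{iden} and the decay estimates match yours.
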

\begin{proof}
Proposition \ref{equi}  implies that $(T, S_{E})\in\mathcal{UH}$. Then by famous Oseledets-Ruelle theorem \cite{viana}, for any $x\in X$, there exist two unit vectors $\vec{s}(x):= \begin{pmatrix}
\varphi^{+}_{0}\\
\varphi^{+}_{-1}
\end{pmatrix}\in \mathbb{C}^{2}$ and $\vec{u}(x):=\begin{pmatrix}
\varphi^{-}_{0}\\
\varphi^{-}_{-1}
\end{pmatrix}\in \mathbb{C}^{2}$
such that
\begin{equation}\label{expdecay}
	\lim_{n\rightarrow\infty} \frac{1}{n}\log \|A_{n}(x)\vec{s}(x)\|_{2} =-L(E), \ \text{and}\ \lim_{n\rightarrow-\infty} \frac{1}{n}\log \|A_{n}(x)\vec{u}(x)\|_{2} =-L(E).
\end{equation}
Denote $\vec{\varphi}^{u}(n):=\begin{pmatrix}
	\varphi^{-}_{n}\\
	\varphi^{-}_{n-1}
\end{pmatrix}:= A_{n}(x)\vec{u}(x)$ and $\vec{\varphi}^{s}(n):=\begin{pmatrix}
	\varphi^{+}_{n}\\
	\varphi^{+}_{n-1}
\end{pmatrix}:= A_{n}(x)\vec{s}(x)$ for any $n\in \mathbb{Z}$. 

Now we construct $\mathcal{G}_{E,x}(\cdot,\cdot):\mathbb{Z}\times\mathbb{Z} \rightarrow \mathbb{C}$	by 
\begin{equation*}
	\mathcal{G}_{E,x}(m,n) :={\rm e}^{(n-m)g}G_{E,x}(m,n):=\frac{\mathrm{e}^{(n-m)g}}{\det (\vec{\varphi}^{u}(n),\vec{\varphi}^{s}(n) )}
	\begin{cases}
		\varphi^{-}_{m} \cdot \varphi^{+}_{n}, &m\leqslant n,\\
		\varphi^{-}_{n} \cdot \varphi^{+}_{m}, &n\leqslant m,
	\end{cases}
\end{equation*}
where $G_{E,x}$ is the Green's function of $H(x)$ at $E$. By using $G_{E,x}(H(x)-E)=(H(x)-E)G_{E,x}=\mathrm{Id}$, one can easily check that \eqref{iden} holds. Since $(T, S_{E})\in\mathcal{UH}$, we know $|\det (\vec{\varphi}^{u}(n),\vec{\varphi}^{s}(n) )|>\varkappa$ uniformly for some $\varkappa>0$.
Moreover, it follows from \eqref{expdecay} that for any $0<\varepsilon<L(E)-g$, there exists $C_{3}=C_{3}(E,v,g,T,\varepsilon,\varkappa)$ such that
\begin{equation*}
	|\mathcal{G}_{E,x}(m,n)| \leqslant \begin{cases}
		C_{3}{\rm e}^{-(L(E)-\varepsilon-g)(n-m)}, &  m\leqslant n,\\
		C_{3}{\rm e}^{-(L(E)-\varepsilon+g)(m-n)}, & n\leqslant m.
	\end{cases}	
\end{equation*}	
Again, by using Young's inequality, we prove the boundedness of $\mathcal{G}_{E,x}$.
\end{proof}	

Combining {\bf Case} 1 with {\bf Case} 2 proves Proposition \ref{subset}. 
\end{proof}

\section{Applications of Theorem \ref{spec}}

\subsection{Transition of real-complex spectrum}
Once we have Theorem \ref{spec}, we can prove the transition of real-complex component in $\Sigma(g)$ for the Hatano-Nelson model with real-valued potential.
Recall that the transition points are given by
\begin{equation*}
\underline{g}_{cr}= \inf\{L(E): E\in \mathbb{R}\}, \ \
\overline{g}^{cr}= \sup\{L(E): E\in \Sigma(0)\}.
\end{equation*}

\begin{proof}[Proof of Theorem \ref{transition}]
(1) For $0< g\leqslant \underline{g}_{cr}$, by Theorem \ref{spec}, it is obvious that
\begin{equation*}
\Sigma(0)\cap \mathcal{E}_{-}=\emptyset,\quad \Sigma(0)\cap\mathcal{E}_{0}\subseteq \Sigma(g),\quad \Sigma(0)\cap\mathcal{E}_{+}\subseteq \Sigma(g),
\end{equation*}
thus $\Sigma(0)\subseteq\Sigma(g)$. Note that $\mathcal{E}_{+}\cap\Sigma(0)\subseteq\Sigma(0)$, so we only need to prove $\mathcal{E}_{0}\subseteq\Sigma(0)$. If  $\mathcal{E}_{0}=\emptyset$, there is nothing to prove. We assume $\mathcal{E}_{0}\neq\emptyset$ and there exists $E'\notin \Sigma(0)$ such that $L(E')=g$,  then by Proposition \ref{equi}, $(T,S_{E'})\in\mathcal{UH}$. 

On the one hand, as $L(E')=g$,   by the definition of $\underline{g}_{cr}$ and Lemma \ref{analytic},  $L(\cdot)$ attains the global minimum at $E'$. On the other hand, by Lemma \ref{UHharmonic},  $L(T,A)$ is pluri-harmonic w.r.t. the second variable for $(T,A) \in\mathcal{UH}$, i.e. $L(E)$ is harmonic in a neighborhood of $E'$. As $L(\cdot)$ attains the minimum at $E'$,  by extremum principle of harmonic functions, $L(E)$ is locally constant in the neighborhood of $E'$, 
which contradicts to Lemma \ref{analytic}. Hence $\mathcal{E}_{0}\subseteq\Sigma(0)$. This proves Theorem \ref{transition}\ref{item:real}.

(2) For $\underline{g}_{cr}<g<\overline{g}^{cr}$, we claim that
$\mathcal{E}_{0}\neq \emptyset$. In fact, for any $0<\varepsilon < g-\underline{g}_{cr}$, there exists $E'\in \mathbb{R}$ such that $L(E')<\underline{g}_{cr}+\varepsilon<g$ by the definition of $\underline{g}_{cr}$. It is well-known that the Lyapunov exponent is continuous w.r.t. uniformly hyperbolic cocycles, so it follows that $L(E)$ is continuous w.r.t. $E\in\mathbb{C}\backslash \mathbb{R}$. By Lemma \ref{analytic} and the upper semi-continuity of $L(E)$, there exists $\eta'\in\mathbb{R}\backslash\{0\}$ such that $
L(E'+\mathrm{i}\eta')=g.$ 
Hence $E'+\mathrm{i}\eta'\in\mathcal{E}_{0}\backslash\mathbb{R}$, which means  $\Sigma(g)$ contains complex  spectrum.

On the other hand, by the upper semi-continuity of the Lyapunov exponent, there exists $E'\in\Sigma(0)$ such that
\begin{equation*}
L(E') = \sup\{L(E): E\in \Sigma(0)\}=\overline{g}^{cr} >g,
\end{equation*}
thus $\Sigma(0)\cap \mathcal{E}_{+}\neq\emptyset$, then $\Sigma(g)$ contains real spectrum. This proves Theorem \ref{transition}\ref{item:real-comp}.

(3) For $g\geqslant\overline{g}^{cr}$, by the definition of $\overline{g}^{cr}$ we have  
\begin{equation*}
L(E)\leqslant \overline{g}^{cr}\leqslant g, \ \text{for any}\ E\in \Sigma(0),
\end{equation*}
which means $\Sigma(0)\cap \mathcal{E}_{+}=\emptyset$ and thus $\Sigma(g)=\mathcal{E}_{0}$ by Theorem \ref{spec}. 
\end{proof}

\subsection{A question by Goldsheid-Khoruzhenko}

\begin{proof}[Proof of Corollary \ref{contour}]	
By Lemma \ref{analytic},  $\mathbb{R}\cap \mathcal{E}_{-}=\cup_{j}(a_{j},b_{j})$,   $\mathcal{E}_{0}=\cup_{j} \partial \Omega_{j}$, where $\partial\Omega_{j}:=\{E\in\mathcal{E}_{0}: a_{j}\leqslant \mathrm{Re}E \leqslant b_{j}\}$.  It follows from Lemma \ref{potential}\ref{item:growth} that $\mathcal{E}_{-}$ is bounded and hence $|a_{j}|,|b_{j}|<\infty$.  Hence $\mathcal{E}_{0}$ consists of a  family of contours.  Moreover, by the continuity of $L(E)$ we have $L(a_{j})=L(b_{j})=g$. 

We are going to show the number of contours is finite. Denote by $\Omega_{j}$ the domain surrounded by the contour $\partial \Omega_{j}$. We claim that
\begin{equation}\label{claim}
\Omega_{j}\cap\Sigma(0)\neq \emptyset, \ \text{for any} \ j.
\end{equation}
If not, then by Proposition \ref{equi} there exists $j$ such that
$(\alpha,S_{E})\in\mathcal{UH}$ for any $E\in\Omega_{j}$. Thus, by Lemma \ref{UHharmonic}, we have
\begin{equation*}
\Delta L(E)=0, \ \text{for any}\ E\in\Omega_{j}.
\end{equation*}
Since $\partial \Omega_{j}\subseteq\mathcal{E}_{0}$, we know $L(E)=g$ for $E\in \partial \Omega_{j}$. By the extremum principle of harmonic functions, we have $L(E)=g$ for any $E\in \Omega_{j}$. However, this contradicts to Lemma \ref{analytic}\ref{item:mono}.	

Assume that there exist infinite contours, i.e.  $\mathbb{R}\cap \mathcal{E}_{-}=\cup_{j=1}^{\infty}(a_{j},b_{j})$. We denote by $E^{*}$ an accumulation point of some subsequence $\{a_{i}\}_{i=1}^{\infty}$. Again, by the continuity of $L(E)$ and $\{a_{i}\}\subseteq \mathcal{E}_{0}$, we have
\begin{equation}\label{accu}
L(E^{*})=\lim_{i\rightarrow \infty} L(a_{i}) = g.
\end{equation}
On the other hand,  by the disjointness of these open intervals, one can conclude that for any $\delta>0$, there exists an interval $(a_{k},b_{k}) \subseteq \{z:|z-E^*|<\delta\}$. 

For any $\delta>0$, by \eqref{claim} there exists $E\in \Omega_{k} \cap \Sigma(0)\subseteq (a_{k},b_{k})$ such that $|E-E^*|<\delta$. However, by \eqref{accu} and $g>\overline{g}^{cr}$,
\begin{equation*}
|L(E^*)-L(E)|\geqslant L(E^*)-L(E)>g-\overline{g}^{cr}>0,
\end{equation*}
which means $L(\cdot)$ is discontinuous at $E^*$, resulting in a contradiction.
\end{proof}

\subsection{Two-dimensional spectrum}

Let us first prove Corollary \ref{single}.
\begin{proof}[Proof of Corollary \ref{single}]
	By Theorem 1.3 of \cite{WYZ}, we have 
	\begin{equation*}
		L(E)=\max\bigg\{\log\Big|\frac{E}{2}+\frac{\sqrt{E^2-4}}{2}\Big|,\log |\lambda|\bigg\}, \ \text{for any}\ E\in\mathbb{C}.
	\end{equation*} 
	Then the result follows directly from Theorem \ref{spec}. Indeed, if 
	$g=\log |\lambda|$, by the definition of $\mathcal{E}_{0}$ we can easily compute that 
	\begin{equation*}
		\mathcal{E}_{0}=\bigg\{E\in\mathbb{C}: \Big|\frac{E}{2}+\frac{\sqrt{E^2-4}}{2}\Big|\leqslant \mathrm{e}^{g}\bigg\},
	\end{equation*}
	which implies the result.
\end{proof}

To prove Corollary \ref{2d}, we need the following several results.
\begin{theorem}[\cite{GJYZ}, Theorem 5.1]\label{gne1}
Assume $v(x)=\sum_{|{ k}|\leqslant m} \widehat{v}_{ k} \mathrm{e}^{2\pi\mathrm{i} { k}x}\in C^{\omega}(\mathbb{T},\mathbb{R})$. For $\alpha\in\R\backslash\Q$, $E\in\mathbb{C}$, $y>0$, we have
\begin{equation*}
L(E,\mathrm{i}y)= L(E) -\sum_{\{j:\gamma_{j}(E)< 2\pi y\}}  \gamma_{j}(E)+2\pi\big(\#\{j:\gamma_{j}(E)<2\pi y\}\big)y.
\end{equation*}
\end{theorem}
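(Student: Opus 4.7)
The plan is to combine Avila's quantitative global theory of one-frequency Schr\"odinger cocycles with Aubry duality. Write $\ell(y) := L(E, iy)$. Avila's convexity and quantization of acceleration assert that $\ell$ is a convex piecewise-affine function on $[0,\infty)$ whose right derivative at every $y \ge 0$ belongs to $2\pi \mathbb{Z}_{\ge 0}$; since $v$ is a trigonometric polynomial of degree $m$, this slope is capped by $2\pi m$. Consequently $\ell$ is completely determined by (i) its value at $y=0$, (ii) its leading behavior as $y\to\infty$, and (iii) the locations of the at most $m$ break points where the slope jumps.

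First I would pin down the boundary data. By definition $\ell(0) = L(E)$. For large $y$, the Fourier expansion $v(x+iy) = \sum_{|k|\le m}\widehat{v}_k e^{-2\pi ky}e^{2\pi i k x}$ is dominated by the term with $k=-m$, of modulus $|\widehat{v}_m|\,e^{2\pi m y}$ (using $\widehat{v}_{-m}=\overline{\widehat{v}_m}$ since $v$ is real). A direct rescaling and Herman-type lower bound yield
\begin{equation*}
\ell(y) \;=\; 2\pi m y + \log|\widehat{v}_m| + o(1) \qquad \text{as } y\to\infty,
\end{equation*}
which forces the terminal slope of $\ell$ to be exactly $2\pi m$; that is, the acceleration attains its maximal value $m$ for all sufficiently large $y$.

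The core step is to identify the break points of $\ell$ with the numbers $\gamma_j(E)/(2\pi)$. For this I would invoke Aubry duality: the dual cocycle $(\alpha, \widehat{L}^{2\cos}_{E,v})$ is $2m$-dimensional symplectic with non-negative Lyapunov exponents $0 \le \gamma_1(E) \le \cdots \le \gamma_m(E)$. The key analytical content, worked out in the quantitative global theory \cite{GJYZ}, is that the acceleration of $\ell$ jumps upward by exactly one unit as $y$ crosses each $\gamma_j(E)/(2\pi)$. Conceptually, at such a crossing the dual cocycle loses one layer of its dominated splitting; via duality this corresponds to the appearance of a new Bloch-type invariant direction of the Schr\"odinger cocycle at phase $x+iy$, and it is this that produces the slope jump. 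This is the step I expect to be the principal obstacle: it requires a careful Aubry-dual bookkeeping matching the $m$ non-negative dual exponents with the $m$ acceleration steps between $0$ and $2\pi m$.

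Putting the pieces together, $\ell$ is the unique convex piecewise-affine function with $\ell(0) = L(E)$, with break points precisely $\{\gamma_j(E)/(2\pi)\}_{j=1}^{m}$, and with slope increments of $+2\pi$ at each of them. Integrating from $0$ to $y$ then gives
\begin{equation*}
L(E,iy) \;=\; L(E) \;-\; \sum_{\{j:\,\gamma_j(E) < 2\pi y\}} \gamma_j(E) \;+\; 2\pi\,\bigl(\#\{j:\gamma_j(E) < 2\pi y\}\bigr)\,y,
\end{equation*}
which is the claimed formula. As a consistency check, matching this against the asymptotic $2\pi m y + \log|\widehat{v}_m| + o(1)$ yields $L(E) - \sum_{j=1}^m \gamma_j(E) = \log|\widehat{v}_m|$, the expected Thouless-type identity linking $L(E)$, the dual Lyapunov spectrum, and the leading Fourier coefficient of $v$.
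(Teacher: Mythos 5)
This statement is quoted in the paper directly from \cite{GJYZ} (Theorem 5.1); the paper gives no proof of it, so there is nothing internal to compare your argument against. Judged on its own terms, your proposal correctly assembles the standard scaffolding of Avila's global theory: convexity and piecewise affinity of $y\mapsto L(E,\mathrm{i}y)$, quantization of the acceleration in $2\pi\mathbb{Z}$, the value $L(E)$ at $y=0$, and the large-$y$ asymptotics $2\pi my+\log|\widehat v_m|+o(1)$ forcing terminal slope $2\pi m$. The closing consistency check, $L(E)-\sum_{j=1}^m\gamma_j(E)=\log|\widehat v_m|$, is exactly the Haro--Puig identity the paper records as Theorem 4.2, which is a good sign that the pieces fit.

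However, the argument has a genuine gap at precisely the step you flag as ``the principal obstacle'': the identification of the break points of $y\mapsto L(E,\mathrm{i}y)$ with the points $\gamma_j(E)/(2\pi)$, with unit acceleration jumps at each. You do not prove this; you invoke it as ``worked out in the quantitative global theory \cite{GJYZ}.'' But that identification \emph{is} the content of the theorem being proved --- once the break points and jump sizes are known, the displayed formula is just the integration of a piecewise constant derivative, and the boundary data you establish are comparatively soft. So the proposal is circular at its core: it reduces the statement to the main result of the reference from which the statement is taken. To close the gap one would need the actual Aubry-duality mechanism of \cite{GJYZ} --- roughly, relating the acceleration of the complexified Schr\"odinger cocycle at height $y$ to the number of dual Lyapunov exponents below $2\pi y$ via a dominated-splitting/Jensen-type count for the $2m$-dimensional dual cocycle --- and none of that bookkeeping is supplied. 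A secondary caveat: Avila's quantization and convexity statements are usually formulated for real $E$, and the theorem here is asserted for all $E\in\mathbb{C}$; you should at least note why the framework extends to complex energies (as it does in \cite{GJYZ}, but not for free).
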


\begin{theorem}[\cite{puig}, Main Application]\label{puig}
Assume $v(x)=\sum_{|{ k}|\leqslant m} \widehat{v}_{ k} \mathrm{e}^{2\pi\mathrm{i} { k}x}\in C^{\omega}(\mathbb{T},\mathbb{R})$. For $\alpha\in\R\backslash\Q$, $E\in\mathbb{C}$, we have
\begin{equation*}
L(E)=\sum\limits_{j=1}^{m} \gamma_{j}(E)+\log |\widehat{v}_{m}|.
\end{equation*}
\end{theorem}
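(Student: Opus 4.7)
The plan is to combine Aubry duality with a \emph{generalized Thouless formula} for the long-range dual operator $\widehat{L}(0,x)$. The classical Thouless formula $L(E)=\int\log|E-\lambda|\,\mathrm{d}N_{H}(\lambda)$ encodes the unique positive Lyapunov exponent of the two-dimensional Schr\"odinger cocycle $(\alpha,S_{E})$. For the $2m$-dimensional symplectic cocycle $(\alpha,\widehat{L}_{E,v}^{2\cos})$ one expects an analogous identity in which all $m$ non-negative Lyapunov exponents $\gamma_{1}(E),\ldots,\gamma_{m}(E)$ enter additively, together with a correction $\log|\widehat{v}_{m}|$ produced by the leading coefficient of the $(2m{+}1)$-term recurrence. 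Matching the two formulas through Aubry duality then yields Theorem \ref{puig}.

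First I would invoke the standard Aubry duality: conjugation by a partial Fourier transform on $L^{2}(\mathbb{T})\otimes\ell^{2}(\mathbb{Z})$ identifies the direct integrals $\int^{\oplus}H(x)\,\mathrm{d}x$ and $\int^{\oplus}\widehat{L}(0,x)\,\mathrm{d}x$, so the integrated densities of states coincide:
\begin{equation}\label{eq:dualIDS}
N_{H}(E)=N_{\widehat{L}}(E),\qquad E\in\mathbb{R}.
\end{equation}
Combined with the classical Thouless formula
\begin{equation*}
L(E)=\int_{\mathbb{R}}\log|E-\lambda|\,\mathrm{d}N_{H}(\lambda),
\end{equation*}
already used in the excerpt, the theorem reduces to the generalized Thouless formula
\begin{equation}\label{eq:genThouless}
\int_{\mathbb{R}}\log|E-\lambda|\,\mathrm{d}N_{\widehat{L}}(\lambda)=\log|\widehat{v}_{m}|+\sum_{j=1}^{m}\gamma_{j}(E).
\end{equation}

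To prove \eqref{eq:genThouless} I would restrict $\widehat{L}(0,x)$ to $[1,N]$ with periodic boundary conditions, obtaining a banded self-adjoint matrix $\widehat{L}_{N}(x)$. Expanding the determinant along the top band and iterating the $(2m{+}1)$-term recurrence produces the factorization
\begin{equation*}
\bigl|\det(\widehat{L}_{N}(x)-E)\bigr|=|\widehat{v}_{m}|^{N}\cdot\bigl\|\Lambda^{m}M_{N}(x)\,\omega_{0}\bigr\|\cdot(1+o(1)),
\end{equation*}
where $M_{N}(x)=\prod_{k=N-1}^{0}\widehat{L}_{E,v}^{2\cos}(x+k\alpha)$ is the $2m$-dimensional dual transfer cocycle, $\Lambda^{m}$ its $m$-th exterior power, and $\omega_{0}$ a fixed Lagrangian $m$-frame. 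The prefactor $|\widehat{v}_{m}|^{N}$ appears because each step of the recurrence is normalized by its leading coefficient, while the wedge-product factor captures the joint growth of the top $m$ singular values of $M_{N}$. By the Kingman subadditive theorem applied to $\log\|\Lambda^{m}M_{N}\|$, together with symplecticity (so that the $2m$ Lyapunov exponents come in pairs $\pm\gamma_{j}$ and the top $m$ are precisely the non-negative ones), this wedge norm grows like $\exp\bigl(N\sum_{j=1}^{m}\gamma_{j}(E)\bigr)$. Dividing by $N$, taking logarithms, and using the weak convergence $\mathrm{d}\mathcal{N}_{N}^{\widehat{L},x}\rightharpoonup\mathrm{d}N_{\widehat{L}}$ (together with Lemma \ref{schwartz} for $m$-a.e.\ $E\in\mathbb{C}$, extended by subharmonicity to all $E$) yields \eqref{eq:genThouless}.

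The main obstacle is \eqref{eq:genThouless}: unlike the scalar Schr\"odinger case, one cannot read the finite-volume determinant off a single entry of a transfer matrix product, and must instead justify the exterior-power factorization and control lower-order boundary terms uniformly in $N$. The second delicate point is the passage from the $m$-a.e.\ identity produced by the lower envelope theorem to \emph{every} $E\in\mathbb{C}$; this will be handled by noting that both sides of \eqref{eq:genThouless} are subharmonic in $E$, agree off a pluripolar set, and share the same logarithmic behavior at infinity (Lemma \ref{potential}\ref{item:growth}), which forces equality everywhere. Once \eqref{eq:dualIDS}, the classical Thouless formula, and \eqref{eq:genThouless} are in place, Theorem \ref{puig} follows by direct substitution.
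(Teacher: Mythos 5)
The paper offers no proof of this statement: it is quoted directly from Haro--Puig \cite{puig} as their ``Main Application,'' so there is no internal argument to compare against. Your reconstruction --- Aubry duality of the integrated densities of states, the classical Thouless formula for $H$, and a generalized Thouless formula for the long-range dual operator obtained from exterior powers of the symplectic transfer cocycle together with the lower envelope theorem and a subharmonicity upgrade from $m$-a.e.\ $E$ to all $E\in\mathbb{C}$ --- is precisely the strategy of that cited reference, and the two delicate points you single out (the exact determinant-to-minor factorization with the $|\widehat{v}_{m}|^{N}$ prefactor, and the a.e.-to-everywhere passage) are indeed where the real work lies there.
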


\begin{theorem}[\cite{AJS14,BJ02, JKS, Powell}]\label{conLE}
For $(\alpha,A)\in \mathbb{R}\times C^{\omega}(\mathbb{T}, \mathrm{gl}(m,\mathbb{C}))$, then for any $j=1,\cdots,m$, $(\alpha, A)\mapsto \gamma_{j}(\alpha,A)\in [-\infty,\infty)$ are continuous at any $(\alpha, A)$  with $\alpha\in\mathbb{R}\backslash \mathbb{Q}$.
\end{theorem}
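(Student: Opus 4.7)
The theorem compiles continuity results from four cited references; the plan is to reduce the continuity of $\gamma_{j}$ to that of the top Lyapunov exponent of analytic cocycles over irrational rotations, and then invoke the Bourgain--Jitomirskaya machinery.

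First I would use the identification of partial sums of Lyapunov exponents with the top Lyapunov exponent of exterior-power cocycles: for any $1\leqslant j\leqslant m$,
\begin{equation*}
\gamma_{m-j+1}(\alpha,A)+\cdots+\gamma_{m}(\alpha,A)=L\bigl(\alpha,\Lambda^{j}A\bigr),
\end{equation*}
where $\Lambda^{j}A\in C^{\omega}(\mathbb{T},\mathrm{gl}(\binom{m}{j},\mathbb{C}))$ is again analytic. Telescoping gives $\gamma_{m-j+1}=L(\Lambda^{j}A)-L(\Lambda^{j-1}A)$, and since the exterior-power operation is continuous $\mathrm{gl}(m,\mathbb{C})\to\mathrm{gl}(\binom{m}{j},\mathbb{C})$, it suffices to prove joint continuity of $(\alpha,B)\mapsto L(\alpha,B)\in[-\infty,\infty)$ at every $(\alpha,B)$ with $\alpha$ irrational, for $B\in C^{\omega}(\mathbb{T},\mathrm{gl}(N,\mathbb{C}))$ of arbitrary dimension $N$. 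The value $-\infty$ is permitted in order to allow $\det B$ to vanish on a set of positive measure.

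Upper semi-continuity is the easy half: Kingman's subadditive ergodic theorem gives
\begin{equation*}
L(\alpha,B)=\inf_{n\geqslant 1}\frac{1}{n}\int_{\mathbb{T}}\log\|B_{n}(x)\|\,\mathrm{d}x,
\end{equation*}
and for each fixed $n$ the right-hand side is continuous in $(\alpha,B)\in\mathbb{R}\times C^{\omega}(\mathbb{T},\mathrm{gl}(N,\mathbb{C}))$ (no irrationality required), so $L$ is upper semi-continuous everywhere as an infimum of continuous functions. For lower semi-continuity at irrational $\alpha$, the plan is to invoke the quantitative strategy of Bourgain--Jitomirskaya \cite{BJ02}, extended to higher dimensions and singular cocycles in \cite{JKS,AJS14,Powell}. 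The three ingredients are: (i) plurisubharmonicity of $y\mapsto\frac{1}{n}\int_{\mathbb{T}}\log\|B_{n}(x+\mathrm{i}y)\|\,\mathrm{d}x$ on the strip of analyticity, producing convexity in $y$ and near-extremal control at $y=0$; (ii) a large deviation theorem stating that $\big|\frac{1}{n}\log\|B_{n}(x)\|-L_{n}(\alpha,B)\big|<\varepsilon$ outside a set of exponentially small Lebesgue measure, with rate depending only on the analyticity width and a mild arithmetic tolerance on $\alpha$; and (iii) the avalanche principle, which assembles well-controlled blocks of length $n$ into a product of length $N\gg n$ with good control on its norm. A small perturbation of $(\alpha,B)$ preserves the large deviation estimate on scales bounded by the denominator of a good rational approximant to $\alpha$, and the avalanche principle then yields $L(\alpha',B')\geqslant L(\alpha,B)-\varepsilon$ for all nearby $(\alpha',B')$.

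The main obstacle is ensuring uniformity of the large deviation estimate in $\alpha$ on a neighborhood of an irrational base point, because the perturbed frequencies $\alpha'$ can have arbitrarily poor Diophantine behavior. This is resolved by working on scales bounded above by the denominator of a very good continued-fraction approximant to $\alpha$, on which \emph{every} $\alpha'$ in a sufficiently small neighborhood satisfies a usable effective Diophantine condition. A secondary technical hurdle is the passage from the $\mathrm{SL}(2,\mathbb{C})$ setting of \cite{BJ02} to $\mathrm{gl}(N,\mathbb{C})$ carried out in \cite{JKS,AJS14,Powell}: one must treat singular values inductively and carefully accommodate the limiting value $-\infty$ when $B$ degenerates. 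Once these technical points are in place, the joint upper semi-continuity and the matching lower semi-continuity together yield continuity of $L(\alpha,B)$, and by the exterior-power reduction, continuity of each $\gamma_{j}(\alpha,A)$ at every irrational $\alpha$.
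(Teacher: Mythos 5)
This theorem is imported verbatim from the cited literature; the paper offers no proof of it, so there is nothing internal to compare your argument against. Judged on its own terms, your outline gets the overall architecture right: the reduction of the $j$-th exponent to top exponents of exterior powers $\Lambda^{j}A$ (with the caveat you note about telescoping through the value $-\infty$, which is handled by upper semicontinuity of the partial sums), and the split into upper semicontinuity via Kingman's infimum formula versus the hard lower-semicontinuity half at irrational $\alpha$, including the correct observation that uniformity in $\alpha$ is obtained by working only on scales commensurate with a good continued-fraction denominator.

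The one substantive caveat concerns your plan for the hard half. The LDT-plus-avalanche-principle scheme you describe is the mechanism of \cite{BJ02} and \cite{JKS}, which cover $\mathrm{SL}(2,\mathbb{C})$ and $M_{2}(\mathbb{C})$; but the statement being proved here is for $C^{\omega}(\mathbb{T},\mathrm{gl}(m,\mathbb{C}))$ with arbitrary $m$ and possibly singular matrices, and this is exactly the regime the paper needs (the dual cocycle $(\alpha,\widehat{L}^{2\cos}_{E,v})$ is $2m$-dimensional). Extending the avalanche principle inductively through the singular-value hierarchy runs into genuine trouble when $\det(\Lambda^{j}A)$ degenerates and when gaps between consecutive exponents close, which is precisely why \cite{AJS14} establishes the general case by a different route: approximation of $\alpha$ by rationals $p/q$ combined with convexity/plurisubharmonicity of the complexified Lyapunov exponent, explicitly avoiding large deviations and the avalanche principle. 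So your proposal is a faithful sketch of the two-dimensional proofs but, as a program for the full statement, the lower-semicontinuity step is underspecified at the point where the known literature found it necessary to change methods; to make the write-up honest you should either restrict the AP-based argument to $m=2$ and cite \cite{AJS14} for general $m$, or replace step (iii) by the rational-approximation argument of \cite{AJS14}.
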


\begin{proof}[Proof of Corollary \ref{2d}]

By Theorem \ref{conLE}, the continuity of $\gamma_{m}(\cdot)$ implies that there exists $\delta$ such that 
\begin{equation*}
\gamma_{m}(E) < 2\pi{y}, \ \text{for all} \ |E|\leqslant \delta.
\end{equation*} 
By Theorem \ref{gne1} and Theorem \ref{puig},  we have
\begin{equation}\label{constant}
L(E,\mathrm{i}y)=\log|\widehat{v}_m|+2\pi my, \ \text{for all} \ |E|\leqslant \delta.
\end{equation} 
Consider the  Hatano-Nelson model with complex extension
\begin{equation*}
[\mathcal{H}(g,x+\mathrm{i}y)u]_{n}=-\mathrm{e}^{g}u_{n+1}-\mathrm{e}^{-g}u_{n-1}+v(x+n\alpha+\mathrm{i}y)u_{n},
\end{equation*} 
where $g=\log|\widehat{v}_m|+2\pi my$. We still denote by $\Sigma(g)$ the spectrum set of $\mathcal{H}(g,x+\mathrm{i}y)$ and let $\mathcal{E}_{0}=\{E:L(E,\mathrm{i}y)=g\}$.
By \eqref{constant} and Theorem \ref{spec}, $\{|E|\leqslant \delta\}\subseteq \mathcal{E}_{0}\subseteq\Sigma(g)$. 
\end{proof}

\section{Support of density of states measure}
In this section, we study the asymptotic distribution of eigenvalues of the truncated operators under periodic boundary conditions. First, we recall the following:
\begin{theorem}[\label{GKIDS}\cite{GK00}, Theorem 4.1, Corollary 4.6]
Let $(X,\mu,T)$ be ergodic and $X$ be a compact metric space. Let $v: X\rightarrow \mathbb{R}$ such that $\int_{X}\log (1+|v|)\mathrm{d}\mu<\infty$. Then
\begin{enumerate}[font=\normalfont, label={(\arabic*)}]
\item 	For $\mu$-a.e. $x\in X$ and $m$-a.e. $E\in\mathbb{C}$,
\begin{equation*}
	\frac{1}{n}\log |\det (\mathcal{H}_{n}(g,x)-E)|\rightarrow  L_{+}(E):=\max\{L(E),g\}.
\end{equation*}
	
\item \label{item:ids} For $\mu$-a.e. $x\in X$, 
\begin{equation*}
\mathrm{d}\mathcal{N}^{g,x}_{n}\rightharpoonup \mathrm{d}\mathcal{N}^{g} :=\frac{1}{2\pi} \Delta L_{+}(E) \mathrm{d}m, \ \text{as}\ n\rightarrow \infty.
\end{equation*}

\end{enumerate}
\end{theorem}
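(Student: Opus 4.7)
The plan is to combine Theorem~\ref{GKIDS}\ref{item:ids}, which asserts $\mathrm{d}\mathcal{N}^{g} = \tfrac{1}{2\pi}\Delta L_{+}\,\mathrm{d}m$ with $L_{+}(E):=\max\{L(E),g\}$, with the spectrum formula $\Sigma(g)=\mathcal{E}_{0}\cup(\Sigma(0)\cap\mathcal{E}_{+})$ from Theorem~\ref{spec}. The task reduces to proving
\begin{equation*}
\supp\Delta L_{+} \;=\; \mathcal{E}_{0}\cup(\Sigma(0)\cap\mathcal{E}_{+}).
\end{equation*}
Since $v$ is real-valued, the Thouless formula exhibits $L$ as the logarithmic potential of the integrated density of states measure $\mathrm{d}N$ of the self-adjoint operator $H(x)$, so by Lemma~\ref{potential} one has $\tfrac{1}{2\pi}\Delta L = \mathrm{d}N$ on $\mathbb{C}$ in the distributional sense, with $\supp\mathrm{d}N=\Sigma(0)\subset\mathbb{R}$.

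For the inclusion $\supp\Delta L_{+}\subseteq\mathcal{E}_{0}\cup(\Sigma(0)\cap\mathcal{E}_{+})$, I would show that $L_{+}$ is harmonic on the complement $\mathcal{E}_{-}\cup(\mathcal{E}_{+}\setminus\Sigma(0))$. The assumed continuity of $L$ makes both $\mathcal{E}_{-}$ and $\mathcal{E}_{+}$ open. On $\mathcal{E}_{-}$, $L_{+}\equiv g$ is constant. On $\mathcal{E}_{+}\setminus\Sigma(0)$, Proposition~\ref{equi} gives $(T,S_{E})\in\mathcal{UH}$, and Lemma~\ref{UHharmonic} yields pluriharmonicity of $L$ in the parameter $E$, so $L_{+}=L$ is harmonic there as well.

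The reverse inclusion splits into two pieces. For $E_{0}\in\Sigma(0)\cap\mathcal{E}_{+}$, openness of $\mathcal{E}_{+}$ and $L_{+}=L$ on it yield $\Delta L_{+}|_{\mathcal{E}_{+}}=2\pi\,\mathrm{d}N|_{\mathcal{E}_{+}}$, whose support is precisely $\Sigma(0)\cap\mathcal{E}_{+}$. For $E_{0}\in\mathcal{E}_{0}$, I argue by contradiction: if $L_{+}$ were harmonic in some open neighborhood $U\ni E_{0}$, then $L_{+}\geqslant g$ pointwise combined with $L_{+}(E_{0})=g$ forces $L_{+}\equiv g$ on $U$ by the minimum principle, hence $L\leqslant g$ on $U$. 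When $E_{0}\in\mathbb{R}$, Lemma~\ref{analytic}\ref{item:mono} yields $L(E_{0}+\mathrm{i}\eta)>L(E_{0})=g$ for small $\eta>0$, contradicting $L\leqslant g$ on $U$. When $E_{0}\notin\mathbb{R}$, automatically $E_{0}\notin\Sigma(0)\subset\mathbb{R}$, so $L$ is harmonic near $E_{0}$, and the maximum principle applied to $L-g\leqslant 0$ forces $L\equiv g$ on $U$; this contradicts Lemma~\ref{analytic}\ref{item:curve}, which describes $\mathcal{E}_{0}\cap\{\mathrm{Im}\,E>0\}$ only as a disjoint union of $1$-dimensional analytic arcs. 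The main subtle point is this dichotomy on $\mathcal{E}_{0}$: one must treat real and non-real points separately and invoke the appropriate part of Lemma~\ref{analytic}. The extremum arguments go through precisely because the hypothetical harmonicity of $L_{+}$ on $U$ (together with the harmonicity of $L$ supplied by Proposition~\ref{equi} in the non-real case) legitimizes both principles used above.
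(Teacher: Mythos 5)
Your proposal does not prove the statement in question. Theorem \ref{GKIDS} is the Goldsheid--Khoruzhenko result asserting (1) the almost-everywhere convergence $\frac{1}{n}\log|\det(\mathcal{H}_{n}(g,x)-E)|\to\max\{L(E),g\}$ for the truncated operators with periodic boundary conditions, and (2) the weak convergence of the normalized eigenvalue counting measures $\mathrm{d}\mathcal{N}^{g,x}_{n}$ to $\frac{1}{2\pi}\Delta L_{+}\,\mathrm{d}m$. Your opening sentence takes part (2) of this very theorem as given and then sets out to identify $\supp\Delta L_{+}$ with $\Sigma(g)$ --- but that identification is the content of Theorem \ref{asy}, a different result, and assuming Theorem \ref{GKIDS} in order to establish Theorem \ref{GKIDS} is circular. (Note also that the paper does not reprove this theorem; it imports it from \cite{GK00}.)

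What is missing is the actual substance of the cited result. For part (1) one expands $\det(E-\mathcal{H}_{n}(g,x))$ in terms of the transfer matrices: the two corner entries $-\mathrm{e}^{-g}$ and $-\mathrm{e}^{g}$ close the cycle and contribute terms of size $\mathrm{e}^{\pm ng}$ (the product of the off-diagonal entries around the cycle being $1$), while the tridiagonal core grows like $\mathrm{e}^{nL(E)}$ by the subadditive/multiplicative ergodic theorem; comparing the two growth rates yields the limit $\max\{L(E),g\}$ for $\mu$-a.e.\ $x$ and $m$-a.e.\ $E$. Part (2) then follows by potential theory: $\frac{1}{n}\log|\det(\mathcal{H}_{n}(g,x)-E)|$ is precisely the logarithmic potential of $\mathrm{d}\mathcal{N}^{g,x}_{n}$, and a.e.\ convergence of these potentials to the subharmonic function $L_{+}$ identifies the weak limit of the measures as $\frac{1}{2\pi}\Delta L_{+}\,\mathrm{d}m$ via the unicity results for logarithmic potentials (in the spirit of Lemma \ref{potential} and Lemma \ref{schwartz}). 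None of this appears in your write-up. As an argument for Theorem \ref{asy}, what you wrote is essentially the route the paper takes there, but that is not the statement you were asked to prove.
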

\begin{remark}
	In \cite{GK00}, except for considering the dynamical defined potentials $V(n)=v(T^{n}x)$, Goldsheid and Khoruzhenko proved Theorem \ref{GKIDS} in the general setting where $V(n)$ is a stationary ergodic sequence defined on a probability space.
\end{remark}

A direct corollary of Theorem \ref{GKIDS} is the Thouless formula of $\mathcal{H}(g,x)$.
\begin{corollary}\label{Cthouless}
	Under the assumption of Theorem \ref{GKIDS}, 
	For every $E\in \mathbb{C}$,
	\begin{equation*}
		L_{+}(E)= \int_{\mathbb{C}} \log |z-E|\mathrm{d}\mathcal{N}^{g}(z).
	\end{equation*}
\end{corollary}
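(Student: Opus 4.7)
The plan is to combine both parts of Theorem \ref{GKIDS} with the Lower Envelope Theorem. Writing the characteristic polynomial as a product over its eigenvalues, one identifies
\begin{equation*}
\frac{1}{n}\log\bigl|\det(\mathcal{H}_{n}(g,x)-E)\bigr|=\frac{1}{n}\sum_{j=1}^{n}\log|E_{j}(g,x)-E|=\int_{\mathbb{C}}\log|z-E|\,\mathrm{d}\mathcal{N}^{g,x}_{n}(z)=:p_{n,x}(E),
\end{equation*}
so the quantity appearing in part~(1) of Theorem~\ref{GKIDS} is exactly the logarithmic potential of the empirical measure $\mathcal{N}^{g,x}_{n}$.

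Next, fix $x\in X$ in the full-measure set on which \emph{both} conclusions of Theorem \ref{GKIDS} hold. Part~(1) then gives $p_{n,x}(E)\to L_{+}(E)$ for $m$-a.e.\ $E\in\mathbb{C}$, while part~(2) gives $\mathrm{d}\mathcal{N}^{g,x}_{n}\rightharpoonup \mathrm{d}\mathcal{N}^{g}$. Moreover, every $\mathcal{N}^{g,x}_{n}$ is supported in the fixed compact disk $\{|z|\leqslant \|v\|_{C^{0}}+\mathrm{e}^{g}+\mathrm{e}^{-g}\}$, so Lemma \ref{schwartz} applies and yields
\begin{equation*}
\limsup_{n\to\infty}p_{n,x}(E)=\int_{\mathbb{C}}\log|z-E|\,\mathrm{d}\mathcal{N}^{g}(z)\qquad\text{for $m$-a.e. }E\in\mathbb{C}.
\end{equation*}
Comparing with the genuine (not just upper) limit from part~(1) gives $L_{+}(E)=\int_{\mathbb{C}}\log|z-E|\,\mathrm{d}\mathcal{N}^{g}(z)$ for $m$-a.e.\ $E$.

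The remaining step is to upgrade this almost-everywhere identity to an everywhere identity, which I would do via subharmonicity. The right-hand side is subharmonic in $E$ by Lemma \ref{potential}\ref{item:subharm}. For the left-hand side, $E\mapsto S_{E}$ is entire with values in $\mathrm{SL}(2,\mathbb{C})$, so Lemma \ref{UHharmonic} ensures that $E\mapsto L(E)$ is subharmonic; hence $L_{+}(E)=\max\{L(E),g\}$ is subharmonic as the maximum of two subharmonic functions. Two subharmonic functions that coincide $m$-a.e.\ must coincide everywhere, since for any subharmonic $u$ one has $u(z_{0})=\lim_{r\to 0^{+}}\frac{1}{\pi r^{2}}\int_{B(z_{0},r)}u\,\mathrm{d}m$, and the right-hand side depends only on the a.e.\ equivalence class. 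This promotes the identity to all $E\in\mathbb{C}$.

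The only delicate point is this final upgrade: one has to make sure that both sides are honest pointwise subharmonic functions, not merely equal a.e.\ to subharmonic ones. This is handled cleanly by the standard results already collected in the preliminaries (Lemmas \ref{UHharmonic} and \ref{potential}); beyond that, the argument is essentially a direct unpacking of Theorem \ref{GKIDS} through the lens of potential theory, as the remark ``direct corollary'' in the statement already suggests.
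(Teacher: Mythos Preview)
Your proposal is correct and follows essentially the same approach as the paper: identify $\tfrac{1}{n}\log|\det(\mathcal{H}_{n}(g,x)-E)|$ as the logarithmic potential of $\mathrm{d}\mathcal{N}^{g,x}_{n}$, combine both parts of Theorem~\ref{GKIDS} through Lemma~\ref{schwartz} to get the identity $m$-a.e., and then upgrade to all $E$ via the weak uniqueness of subharmonic functions. The paper's proof is just a terser version of exactly this; your write-up fills in the details (the subharmonicity of $L_{+}$ as a max of subharmonic functions, the averaging argument for pointwise equality) that the paper compresses into the phrase ``weak uniqueness of the subharmonic function.'' One small caveat: your uniform support bound $\{|z|\leqslant\|v\|_{C^{0}}+\mathrm{e}^{g}+\mathrm{e}^{-g}\}$ uses boundedness of $v$, which is not literally part of the hypotheses of Theorem~\ref{GKIDS} (only $\int\log(1+|v|)\,\mathrm{d}\mu<\infty$ is assumed there), though it holds in the paper's main setting where $v\in C^{0}(X,\mathbb{C})$ on compact $X$.
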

\begin{proof}
	By the weak uniqueness of the subharmonic function, we only need to prove the Thouless formula for a.e. $E\in\mathbb{C}$. 
	Actually, by Theorem \ref{GKIDS}, one only needs to apply Lemma \ref{schwartz} with 
	\begin{equation*}
		p_{n}(E)=\frac{1}{n}\log |\det(\mathcal{H}_{n}(g,x)-E)|, \quad \mathrm{d}\mu_{n}=\mathrm{d}\mathcal{N}_{n}^{g,x}, \quad \mathrm{d}\mu=\mathrm{d}\mathcal{N}^{g}.
	\end{equation*}
	This finishes the proof.
\end{proof}
\begin{remark}
	For complex-valued analytic quasi-periodic potentials and $g=0$, the Thouless formula was proved in \cite{WWYZ}.
\end{remark}

Now we finish the proof of Theorem \ref{asy}.
\begin{proof}[Proof of Theorem \ref{asy}]

Note that according to the assumption on the continuity of the Lyapunov exponent,  $\mathcal{E}_{+}$ and $\mathcal{E}_{-}$ are open sets. Let us first prove 
\begin{equation*}
	\supp \mathrm{d}\mathcal{N}^{g}\subseteq\Sigma(g).
\end{equation*}
Suppose that $E\notin\Sigma(g)$, by Theorem \ref{spec} we have $E\in \mathcal{E}_{-} \cup (\mathcal{E}_{+}\backslash \Sigma(0))$. We need to consider two cases.

{\bf Case 1-1:} $E\in \mathcal{E}_{+}\backslash \Sigma(0)$. By Proposition \ref{equi} we have $(\alpha,S_{E})\in \mathcal{UH}$. By Lemma \ref{UHharmonic}, $L(\alpha,A)$ is pluri-harmonic w.r.t. the second variable for $(\alpha,A)\in \mathcal{UH}$. By the openness of $\mathcal{UH}$ and openness of $\mathcal{E}_{+}$, there exists $\delta=\delta(E,g)>0$ such that
\begin{equation*}
L(z)>g\ \text{and} \ \Delta L(z)=0,\ \text{for all}\ |z-E|<\delta.
\end{equation*}
By the definition of $L_{+}(\cdot)$, we have $L(z)=L_{+}(z)$ for any $z\in \mathcal{E}_{+}$ and thus
\begin{equation*}
\Delta L_{+}(z) = \Delta L(z) =0, \ \text{for all}\ |z-E|<\delta.
\end{equation*}
Thus it follows from Theorem \ref{GKIDS}\ref{item:ids} that
\begin{equation*}
\int_{|z-E|<\delta} \mathrm{d}\mathcal{N}^{g}(z) =\frac{1}{2\pi}\int_{|z-E|<\delta}\Delta L_{+}(z)\mathrm{d}m(z)=0,
\end{equation*}
which means $E\notin \supp \mathrm{d}\mathcal{N}^{g}$.

{\bf Case 1-2:} $E\in \mathcal{E}_{-}$. By the definition of $L_{+}(\cdot)$ and Corollary \ref{Cthouless},
\begin{equation*}
g=L_{+}(\tilde{E})=\int_{\mathbb{C}} \log|z-\tilde{E}|\mathrm{d}\mathcal{N}^{g}(z),\ \text{for all}\ \tilde{E}\in\mathcal{E}_{-}.
\end{equation*}
By the openness of $\mathcal{E}_{-}$, one can apply Lemma \ref{potential}(2) with
\begin{equation*}
\mathrm{d}\mu_{1}=\mathrm{d}\mathcal{N}^{g},\quad \mathrm{d}\mu_{2}=0,\quad h=g, \quad U=\mathcal{E}_{-},
\end{equation*}
then we have $\mathrm{d}\mathcal{N}^{g} |_{\mathcal{E}_{-}}=0$, which means $E\notin\supp \mathrm{d}\mathcal{N}^{g}$.

Finally, let us prove $\Sigma(g)\subseteq\supp \mathrm{d}\mathcal{N}^{g}$.
Suppose that $E\in \Sigma(g)$, by Theorem \ref{spec} we need to consider two cases.

{\bf Case 2-1:} $E\in \mathcal{E}_{+}\cap\Sigma(0)$. 
Since $v$ is real-valued, according to Avron-Simon \cite{AS83}, the Thouless formula for self-adjoint Schr\"odinger operators $H(x)$ reads 
\begin{equation}\label{Hthouless}
L(\tilde{E})=\int_{-\infty}^{\infty}\log |z-\tilde{E}|\mathrm{d}N(z)=\int_{\mathbb{C}}\log |z-\tilde{E}|\mathrm{d}\widehat{N}(z),\ \text{for all}\ \tilde{E}\in\mathbb{C},
\end{equation}
where $\mathrm{d}N$ is the density of states measure of $H$ and $\mathrm{d}\widehat{N}$ is defined as $\widehat{N}(Q):=N(Q\cap \mathbb{R})$ for any Borel set $Q\subseteq \mathbb{C}$. On the other hand, by the definition of $L_{+}(\cdot)$ and Corollary \ref{Cthouless},
\begin{equation}\label{Hp}
L(\tilde{E})=L_{+}(\tilde{E})=\int_{\mathbb{C}}\log |z-\tilde{E}|\mathrm{d}\mathcal{N}^{g}(z),\ \text{for all}\ \tilde{E}\in\mathcal{E}_{+}.
\end{equation}
Combining \eqref{Hthouless} with \eqref{Hp}, we have
\begin{equation*}
\int_{\mathbb{C}}\log |z-\tilde{E}|\mathrm{d}\mathcal{N}^{g}(z)=\int_{\mathbb{C}}\log |z-\tilde{E}|\mathrm{d}\widehat{N}(z),\ \text{for all}\ \tilde{E}\in\mathcal{E}_{+}.
\end{equation*}
Since $\mathcal{E}_{+}$ is an open set, we can now apply Lemma \ref{potential}\ref{item:equ} with
\begin{equation*}
\mathrm{d}\mu_{1}=\mathrm{d}\mathcal{N}^{g},\quad \mathrm{d}\mu_{2}=\mathrm{d}\widehat{N},\quad h=0,\quad U=\mathcal{E}_{+},
\end{equation*}
so that
\begin{equation}\label{dNN}
\mathrm{d}\widehat{N}|_{\mathcal{E}_{+}}=\mathrm{d}\mathcal{N}^{g}|_{\mathcal{E}_{+}}.
\end{equation}	
Recall that  $\Sigma(0)=\supp \mathrm{d}N $ \cite{AS83}.
Hence for $E\in \mathcal{E}_{+}\cap\Sigma(0)$, by \eqref{dNN},
\begin{equation*}
E\in \mathcal{E}_{+}\cap\supp \mathrm{d}N =\supp\mathrm{d}N|_{\mathcal{E}_{+}}=\supp \mathrm{d}\widehat{N}|_{\mathcal{E}_{+}}=\supp\mathrm{d}\mathcal{N}^{g}|_{\mathcal{E}_{+}} \subseteq \supp \mathrm{d}\mathcal{N}^{g}.
\end{equation*}

{\bf Case 2-2:} $E\in\mathcal{E}_{0}$. Without loss the generality, we assume $\mathrm{Im}E\geqslant 0$. By Lemma \ref{analytic}, we have $E+\mathrm{i}\eta\in \mathcal{E}_{+}$ for any $\eta>0$.
If $E\notin \supp \mathrm{d}\mathcal{N}^{g}$, then by Corollary \ref{Cthouless} and Lemma \ref{potential}\ref{item:subharm} there exists $\delta=\delta(E)>0$ such that
\begin{equation*}
\Delta L_{+}(z)=0,\ \text{for any}\ |z-E|<\delta.
\end{equation*}
Since $L_{+}(z)\geqslant g$ for any $|z-E|<\delta$ and $L_{+}(E)=g$, by the extremum principle of harmonic functions, we deduce that 
\begin{equation*}
L_{+}(z)\equiv g,\ \text{for any}\ |z-E|<\delta.
\end{equation*}
However, for any $\eta\in(0,\delta)$ we have
\begin{equation*}
L_{+}(E+\mathrm{i}\eta) = L(E+\mathrm{i}\eta)>g,
\end{equation*}
which is a contradiction.
\end{proof}

\appendix

\section{Weighted Hilbert space}\label{whs}

Let $g\geqslant 0$. Define 
\begin{equation*}
\ell^{2,g}(\mathbb{Z}):=\bigg\{u: \sum_{n} |u_{n}|^{2}\mathrm{e}^{2ng}<\infty\bigg\}.
\end{equation*}
It is easy to see that $\ell^{2,g}(\mathbb{Z})$ is a Hilbert space with the inner product $\langle u| v\rangle := \sum_{n} u_{n}\overline{v_{n}}\mathrm{e}^{2ng}$. In particular, the norm in $\ell^{2,g}(\mathbb{Z})$ is defined by $\|u\|_{2,g}= \sqrt{\langle u|u\rangle}$.

Let $V(\cdot): \mathbb{Z}\rightarrow \mathbb{C}$. We define an operator $\widetilde{\mathcal{H}}(g): \ell^{2,g}(\mathbb{Z})\rightarrow \ell^{2,g}(\mathbb{Z})$ having the same form as $\mathcal{H}(g)$ in \eqref{HNV}. Specifically, for any $u\in\ell^{2,g}(\mathbb{Z})$,
\begin{equation*}
[\widetilde{\mathcal{H}}(g)u]_{n}= -\mathrm{e}^{g}u_{n+1}-\mathrm{e}^{-g}u_{n-1}+ V(n)u_{n},\ n\in\mathbb{Z}.
\end{equation*}
Denote by $H:=\widetilde{\mathcal{H}}(0)$ for the usual Schr\"odinger operator on $\ell^{2}(\mathbb{Z})$.

\begin{lemma}\label{unitary}
Let $[Wu]_{n}=\mathrm{e}^{-ng}u_{n}$. Then $W:\ell^{2}(\mathbb{Z}) \rightarrow \ell^{2,g}(\mathbb{Z})$ is unitary. Moreover, $\widetilde{\mathcal{H}}(g)$ is unitarily equivalent to $H$.
\end{lemma}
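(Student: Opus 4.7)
The statement is essentially a routine computation: the weight $\mathrm{e}^{2ng}$ in the inner product on $\ell^{2,g}(\mathbb{Z})$ is precisely compensated by the exponential factor $\mathrm{e}^{-ng}$ in the definition of $W$, and the $\pm g$ weights on the hopping terms of $\widetilde{\mathcal{H}}(g)$ are exactly what is needed for conjugation by $W$ to cancel them out. I would organize the proof in three short steps.

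First, I would check that $W$ is an isometry by direct computation: for $u\in\ell^{2}(\mathbb{Z})$,
\begin{equation*}
\|Wu\|_{2,g}^{2}=\sum_{n\in\mathbb{Z}}|[Wu]_{n}|^{2}\mathrm{e}^{2ng}=\sum_{n\in\mathbb{Z}}|\mathrm{e}^{-ng}u_{n}|^{2}\mathrm{e}^{2ng}=\sum_{n\in\mathbb{Z}}|u_{n}|^{2}=\|u\|_{2}^{2}.
\end{equation*}
In particular $W$ is a well-defined bounded linear map into $\ell^{2,g}(\mathbb{Z})$.

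Second, I would exhibit a two-sided inverse. Define $[W^{-1}v]_{n}:=\mathrm{e}^{ng}v_{n}$ for $v\in\ell^{2,g}(\mathbb{Z})$. Then $\sum_{n}|\mathrm{e}^{ng}v_{n}|^{2}=\sum_{n}|v_{n}|^{2}\mathrm{e}^{2ng}=\|v\|_{2,g}^{2}<\infty$, so $W^{-1}v\in\ell^{2}(\mathbb{Z})$, and one checks immediately that $WW^{-1}=\mathrm{Id}_{\ell^{2,g}(\mathbb{Z})}$ and $W^{-1}W=\mathrm{Id}_{\ell^{2}(\mathbb{Z})}$. Combined with the isometry property, this shows that $W$ is unitary.

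Third, I would verify the intertwining relation $\widetilde{\mathcal{H}}(g)W=WH$ by a one-line calculation on any $u\in\ell^{2}(\mathbb{Z})$:
\begin{equation*}
[\widetilde{\mathcal{H}}(g)Wu]_{n}=-\mathrm{e}^{g}\mathrm{e}^{-(n+1)g}u_{n+1}-\mathrm{e}^{-g}\mathrm{e}^{-(n-1)g}u_{n-1}+V(n)\mathrm{e}^{-ng}u_{n}=\mathrm{e}^{-ng}[Hu]_{n}=[WHu]_{n},
\end{equation*}
so $\widetilde{\mathcal{H}}(g)=WHW^{-1}$. Since nothing here involves any nontrivial analytic or dynamical content, there is no real obstacle; the only thing to be careful about is to keep the book-keeping of the weights $\mathrm{e}^{\pm g}$ on the off-diagonal entries consistent with the weight $\mathrm{e}^{-ng}$ of $W$ at sites $n\pm1$, which is what makes the cancellation work.
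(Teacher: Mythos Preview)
Your proof is correct and follows exactly the same approach as the paper's own proof, which also computes $\|Wu\|_{2,g}^{2}=\|u\|_{2}^{2}$ directly, notes that $W$ is bijective, and states that $W^{-1}\widetilde{\mathcal{H}}(g)W=H$. You have simply made explicit the inverse map and the intertwining computation that the paper leaves to the reader.
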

\begin{proof}
By the direct calculation, for any $u\in \ell^{2}(\mathbb{Z})$,
\begin{equation*}
\|Wu\|_{2,g}^{2} = \langle Wu|Wu\rangle=\sum_{n} |\mathrm{e}^{-ng} u_{n}|^{2}\mathrm{e}^{2ng} = \|u\|_{2}^{2},
\end{equation*}
thus $W$ preserves the norm. It is obvious that $W$ is bijective, hence $W$ is unitary. Moreover, one can get that $W^{-1}\widetilde{\mathcal{H}}(g)W=H$.
\end{proof}

For $V(\cdot):\mathbb{Z}\rightarrow \mathbb{R}$, the following result implies $\widetilde{\mathcal{H}}(g)$ is indeed self-adjoint on $ \ell^{2,g}(\mathbb{Z})$ even though $\mathcal{H}(g)$ remains non-self-adjoint on $\ell^{2}(\mathbb{Z})$ for any $g>0$.

\begin{lemma}\label{sa}
Let $V:\mathbb{Z} \rightarrow\mathbb{R}$ and $g> 0$. Then $\widetilde{\mathcal{H}}(g): \ell^{2,g}(\mathbb{Z})\rightarrow \ell^{2,g}(\mathbb{Z})$ is self-adjoint.
\end{lemma}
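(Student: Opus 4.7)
The plan is to reduce the self-adjointness of $\widetilde{\mathcal{H}}(g)$ on $\ell^{2,g}(\mathbb{Z})$ to the standard self-adjointness of the Schr\"odinger operator $H$ on $\ell^{2}(\mathbb{Z})$, by invoking Lemma \ref{unitary}. Since $V$ is real-valued, $H$ is the usual discrete Schr\"odinger operator on $\ell^{2}(\mathbb{Z})$, which is well-known to be self-adjoint (on the natural maximal domain $\{u\in\ell^{2}(\mathbb{Z}): Hu\in\ell^{2}(\mathbb{Z})\}$, and in particular bounded self-adjoint when $V$ is bounded). Because self-adjointness is preserved under unitary conjugation, one immediately obtains the conclusion once the intertwining $W^{-1}\widetilde{\mathcal{H}}(g) W = H$ is in place.

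Concretely, first I would record that $W:\ell^{2}(\mathbb{Z})\to\ell^{2,g}(\mathbb{Z})$ defined by $[Wu]_{n}=\mathrm{e}^{-ng}u_{n}$ is a unitary isomorphism, which is exactly Lemma \ref{unitary}. Next, I would define the domain of $\widetilde{\mathcal{H}}(g)$ as $\mathcal{D}(\widetilde{\mathcal{H}}(g)):=W\,\mathcal{D}(H)=\{u\in\ell^{2,g}(\mathbb{Z}): \widetilde{\mathcal{H}}(g)u\in\ell^{2,g}(\mathbb{Z})\}$. Then from $W^{-1}\widetilde{\mathcal{H}}(g)W = H$ one has $\widetilde{\mathcal{H}}(g)=W H W^{-1}$, and since the adjoint satisfies $(W H W^{-1})^{\ast}=W H^{\ast} W^{-1}=W H W^{-1}=\widetilde{\mathcal{H}}(g)$ (using $W^{\ast}=W^{-1}$ and $H=H^{\ast}$), we conclude that $\widetilde{\mathcal{H}}(g)$ equals its own adjoint on $\ell^{2,g}(\mathbb{Z})$.

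As an optional sanity check, one can also verify symmetry directly: for $u,v$ of finite support in $\ell^{2,g}(\mathbb{Z})$,
\begin{equation*}
\langle \widetilde{\mathcal{H}}(g)u\,|\,v\rangle = \sum_{n}\bigl(-\mathrm{e}^{g}u_{n+1}-\mathrm{e}^{-g}u_{n-1}+V(n)u_{n}\bigr)\overline{v_{n}}\,\mathrm{e}^{2ng},
\end{equation*}
and after the index shifts $n\mapsto n-1$ in the first term and $n\mapsto n+1$ in the second, the factors $\mathrm{e}^{\pm g}$ and $\mathrm{e}^{2ng}$ recombine so that the hopping terms become $-\mathrm{e}^{-g}\overline{v_{n-1}}-\mathrm{e}^{g}\overline{v_{n+1}}$; together with $V(n)\in\mathbb{R}$ this exhibits $\langle \widetilde{\mathcal{H}}(g)u\,|\,v\rangle=\langle u\,|\,\widetilde{\mathcal{H}}(g)v\rangle$. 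This makes it transparent why the weight $\mathrm{e}^{2ng}$ is chosen precisely to compensate the non-reciprocal hopping $\mathrm{e}^{\pm g}$.

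The argument is essentially routine once Lemma \ref{unitary} is available, so there is no real obstacle. The only mild subtlety is domain bookkeeping if $V$ is unbounded, but this is handled automatically by defining $\mathcal{D}(\widetilde{\mathcal{H}}(g))=W\mathcal{D}(H)$ and using that $H$ is self-adjoint on $\mathcal{D}(H)$; for the applications in this paper $v\in C^{0}(X,\mathbb{R})$ with $X$ compact gives $V$ bounded, and the statement reduces to bounded self-adjointness.
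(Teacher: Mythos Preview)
Your proposal is correct and follows essentially the same approach as the paper: the paper's proof consists of the single sentence that $H$ is self-adjoint on $\ell^{2}(\mathbb{Z})$ and the result follows from Lemma \ref{unitary}. Your additional direct symmetry computation and domain remarks are accurate but go beyond what the paper records.
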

\begin{proof}
Since the Schr\"odinger operator $H:\ell^{2}(\mathbb{Z})\rightarrow \ell^{2}(\mathbb{Z})$ is self-adjoint, the proof follows from Lemma \ref{unitary}.
\end{proof}

Let $V(n)=v(T^{n}x)$ with $T:X\rightarrow X$ ergodic and $X$ compact. Then the dynamical defined Hatano-Nelson model $\widetilde{\mathcal{H}}(g,x): \ell^{2,g}(\mathbb{Z})\rightarrow\ell^{2,g}(\mathbb{Z})$ is defined as
\begin{equation*}
[\widetilde{\mathcal{H}}(g,x)u]_{n}=-\mathrm{e}^{g}u_{n+1}-\mathrm{e}^{-g}u_{n-1}+v(T^{n}x)u_{n}.
\end{equation*}
The spectrum of $\widetilde{\mathcal{H}}(g,x)$, denoted by $\widetilde{\Sigma}(g)$, is independent of a.e. $x\in X$ by the ergodicity of $T$. We also consider the finite size operators with Dirichlet boundary conditions,
\begin{equation*}
\widetilde{\mathcal{H}}_{n}(g,x):=\mathcal{H}_{n}(g,x):=\begin{pmatrix}
v(T^{0}x)&-\mathrm{e}^{g}&0&\cdots&0\\
-\mathrm{e}^{-g}&v(T^{2}x)&-\mathrm{e}^{g}&\ddots&0\\
\vdots&\ddots&\ddots&\ddots&\vdots\\
0&\ddots&-\mathrm{e}^{-g}&v(T^{n-2}x)&-\mathrm{e}^{g}\\
0&\cdots&0&-\mathrm{e}^{-g}&v(T^{n-1}x)
\end{pmatrix}.
\end{equation*}
Let $\mathrm{d}\widetilde{\mathcal{N}}_{n}^{g,x}=\frac{1}{n}\sum_{j=1}^{n} \delta_{\widetilde{E}_{j}(g,x)}$ where $\{\widetilde{E}_{j}(g,x)\}_{1\leqslant j\leqslant n}$ are the eigenvalues of $\widetilde{\mathcal{H}}_{n}(g,x)$. The following results imply that $\widetilde{\Sigma}(g)$ can be approximated by the spectrum of the finite size operators with Dirichlet boundary conditions.

\begin{proposition}\label{spectrum1}
Let $g> 0$, $X$ compact, $v\in C^{0}(X,\mathbb{R})$ and $T: X \rightarrow X$ ergodic.
\begin{enumerate}[font=\normalfont, label={(\arabic*)}]
\item For any $x\in X$, 
\begin{equation*}
\mathrm{d}\widetilde{\mathcal{N}}^{g,x}_{n}\rightharpoonup \mathrm{d}\widetilde{\mathcal{N}}^{g} =\frac{1}{2\pi} \Delta L(E) \mathrm{d}m, \ \text{as}\ n\rightarrow \infty.
\end{equation*}

\item Moreover,
\begin{equation*}
\supp \mathrm{d}\widetilde{\mathcal{N}}^{g}=\widetilde{\Sigma}(g)=\Sigma(0).
\end{equation*}

\end{enumerate}	
\end{proposition}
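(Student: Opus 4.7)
The proof hinges on a single observation: with Dirichlet boundary conditions, the nonreciprocal hopping can be removed by a diagonal similarity, so everything reduces to the self-adjoint Schr\"odinger theory. Concretely, for the finite matrix $\widetilde{\mathcal{H}}_{n}(g,x)=\mathcal{H}_{n}(g,x)$, set $D=\mathrm{diag}(1,\mathrm{e}^{-g},\mathrm{e}^{-2g},\ldots,\mathrm{e}^{-(n-1)g})$. A direct check of the three diagonals gives
\begin{equation*}
D^{-1}\widetilde{\mathcal{H}}_{n}(g,x)D=H_{n}(x),
\end{equation*}
where $H_{n}(x)$ is the standard self-adjoint Dirichlet truncation of $H(x)$ at scale $n$. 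This similarity is analogous to the conjugation $D^{-1}S_{E}^{g}D=\mathrm{e}^{-g}S_{E}$ appearing in the proof of Proposition \ref{supset}, and it is the Dirichlet-boundary analogue of the unitary $W$ from Lemma \ref{unitary}. In particular, the eigenvalues of $\widetilde{\mathcal{H}}_{n}(g,x)$ and $H_{n}(x)$ coincide with multiplicity, so $\mathrm{d}\widetilde{\mathcal{N}}^{g,x}_{n}=\mathrm{d}\mathcal{N}^{0,x}_{n}$ as measures on $\mathbb{C}$ (both supported on $\mathbb{R}$).

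For part (1), I would apply Theorem \ref{GKIDS} in the trivial case $g=0$. Since the matrices $H_{n}(x)$ depend continuously on $x$ and $T$ is ergodic on the compact space $X$, the weak limit of $\mathrm{d}\mathcal{N}^{0,x}_{n}$ is the deterministic IDS $\mathrm{d}N$ of the self-adjoint operator $H(x)$, independent of $x$. Identifying $\mathrm{d}N$ with its push-forward $\mathrm{d}\widehat{N}$ to $\mathbb{C}$ (supported on $\mathbb{R}$), the classical Thouless formula $L(E)=\int_{\mathbb{R}}\log|E-z|\,\mathrm{d}N(z)$ together with the Riesz representation of a subharmonic function (Lemma \ref{potential}\ref{item:equ}, applied to $p_{\widehat{N}}=L$) gives $\mathrm{d}\widehat{N}=\frac{1}{2\pi}\Delta L\,\mathrm{d}m$. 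Combining with $\mathrm{d}\widetilde{\mathcal{N}}^{g,x}_{n}=\mathrm{d}\mathcal{N}^{0,x}_{n}$, this yields part (1).

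For part (2), the first equality $\widetilde{\Sigma}(g)=\Sigma(0)$ is immediate from Lemma \ref{unitary}: the unitary $W\colon\ell^{2}(\mathbb{Z})\to\ell^{2,g}(\mathbb{Z})$ intertwines $\widetilde{\mathcal{H}}(g,x)$ with the self-adjoint $H(x)$, so the two spectra coincide. The second equality $\supp\mathrm{d}\widetilde{\mathcal{N}}^{g}=\Sigma(0)$ follows from part (1) together with the classical Avron–Simon result \cite{AS83}, which asserts that the support of the IDS of a self-adjoint ergodic Schr\"odinger operator equals its almost sure spectrum $\Sigma(0)$; since $\mathrm{d}\widetilde{\mathcal{N}}^{g}=\mathrm{d}\widehat{N}$ by part (1), this is precisely what is needed.

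The main (very mild) obstacle is upgrading from "$\mu$-a.e.\ $x$" in Theorem \ref{GKIDS} to "any $x\in X$" as stated. This is painless here because the similarity $D^{-1}\widetilde{\mathcal{H}}_{n}(g,x)D=H_{n}(x)$ holds literally for \emph{every} $x$, and the self-adjoint eigenvalues of $H_{n}(x)$ depend continuously on $x$; combined with the $T$-invariance of $\mathrm{d}N$, the Portmanteau argument together with the Weyl criterion for self-adjoint ergodic Schr\"odinger operators gives pointwise weak convergence, and no new machinery beyond the tools already cited in the paper is required.
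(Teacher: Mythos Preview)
Your proposal is correct and follows essentially the same route as the paper: the paper's proof also conjugates $\widetilde{\mathcal{H}}_{n}(g,x)$ to $H_{n}(x)$ via the diagonal matrix $W_{n}=\mathrm{diag}(\mathrm{e}^{-g},\dots,\mathrm{e}^{-ng})$ and then simply invokes \cite{AS83} together with Lemma~\ref{sa} (equivalently, Lemma~\ref{unitary}). Your write-up is a more explicit unpacking of exactly these ingredients, including the Thouless/Riesz identification $\mathrm{d}\widehat N=\frac{1}{2\pi}\Delta L\,\mathrm{d}m$ and the upgrade from $\mu$-a.e.\ $x$ to every $x$, which the paper leaves implicit.
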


\begin{proof}[Proof of Proposition \ref{spectrum1}]
Let $W_{n}=\mathrm{diag} (\mathrm{e}^{-g},\cdots,\mathrm{e}^{-ng})$. It is obvious that 
\begin{equation*}
W_{n}^{-1}\widetilde{\mathcal{H}}_{n}(g,x) W_{n}=H_{n}(x),
\end{equation*}
where $H_{n}(x):=\mathcal{H}_{n}(0,x)$ is the truncated Schr\"odinger operator. The proof follows from \cite{AS83} and Lemma \ref{sa}.
\end{proof}

\section*{Acknowledgements}
X. Wang would like to thank Zhenghe Zhang and David Damanik for useful discussions on Green's function. This work was partially supported by National Key R\&D Program of China (2020YFA0713300) and Nankai Zhide Foundation.  J. You was also partially supported by NSFC grant (11871286). Q. Zhou was supported by NSFC grant (12071232).

\end{document}